\DeclareSymbolFont{cyrletters}{OT2}{wncyr}{m}{n}
\DeclareMathSymbol{\Sha}{\mathalpha}{cyrletters}{"58}
\newtheorem{thm}{Theorem}[section]
\newtheorem{cor}[thm]{Corollary}
\newtheorem{lem}[thm]{Lemma}
\newtheorem{prop}[thm]{Proposition}
\theoremstyle{definition}
\theoremstyle{remark}
\numberwithin{equation}{section}
\newcommand{\norm}[1]{\left\Vert#1\right\Vert}
\newcommand{\abs}[1]{\left\vert#1\right\vert}
\newcommand{\pair}[2]{\left\langle#1,#2\right\rangle}
\newcommand{\Dd}{\mathcal{D}}
\newcommand{\Ee}{\mathcal{E}}
\newcommand{\rl}{{\mathbb{R}}}
\newcommand{\cx}{{\mathbb{C}}}
\newcommand{\id}{{\mathsf{id}}}
\newcommand{\Mm}{\mathfrak{M}}
\newcommand{\Nn}{{\mathfrak{N}}}
\newcommand{\Xx}{\mathsf{X}}
\newcommand{\dbar}{\overline{\partial}}
\newcommand{\tensor}{\otimes}
\newcommand{\csor}{ \widehat{\otimes} }
\newcommand{\bc}{{\mathsf{bc}}}
\newcommand{\ce}{{\mathsf{ce}}}
\newcommand{\dist}{{\rm dist}}
\newcommand{\supp}{{\rm supp}}
\title[Distributional Boundary Values]{Distributional boundary values of holomorphic functions on product domains}\thanks{This work was partially supported by a grant from the Simons Foundation (\#316632 to Debraj Chakrabarti). Debraj Chakrabarti was also partially supported by an Early Career internal grant from Central Michigan University. Rasul Shafikov was partially supported by an NSERC grant.}
\subjclass[2010]{46F20, 32A40}
\author{Debraj Chakrabarti}
\address{Department of Mathematics, Central Michigan University, Mt. Pleasant,  MI 48859,  USA}
\email{chakr2d@cmich.edu}
\author{Rasul Shafikov}
\address{Department of Mathematics, University of Western Ontario,  London, Ontario, Canada, N6A 5B7 }
\email{shafikov@uwo.ca}
\begin{document}
\maketitle
\section{Introduction}
\subsection{Distributional Boundary Values} The study of boundary values of holomorphic functions as generalized functions has a long history,  going back to 
the theory of analytic functionals developed by Fantappi\`{e} in the 1920's and 30's. The development 
of the theory of topological vector spaces and distributions led to significant progress in this problem in the 1950's and 60's 
by K{\"o}the, Silva, Grothendieck, Sato, Martineau, Tillmann and  many others. See the book review \cite{horvath-review} for a
short history of the topic, and  \cite{martineau1} for an annotated bibliography of the early contributions till 1964.  Such generalized boundary values may be studied either as distributions in the sense of Sobolev and Schwartz 
or as hyperfunctions in the sense of Sato. The former approach, which is adopted in this paper,
allows $\mathcal{C}^\infty$-smooth boundaries, but requires the 
holomorphic functions  to grow at most polynomially as one approaches the boundary. If we use hyperfunctions, we can obtain boundary values of all holomorphic functions, but we must restrict the boundary to be 
real analytic (see \cite{polkingwells}). We note however that many of the considerations of this paper apply 
to hyperfunction boundary values as well, and this aspect will be discussed in detail in a forthcoming paper. It is also possible
to obtain boundary values of other holomorphic objects ($p$-forms, sections of vector bundles) by routine extensions of 
the methods of this paper.

The motivation
of this paper is to try to generalize the notion 
of distributional boundary values to holomorphic functions defined on domains which have {\em piecewise smooth} boundaries.  Recall that there exist distributional boundary values 
of holomorphic functions of polynomial growth defined on a ``wedge'' attached to a generic ``edge'' (see \cite{baouendibook,cordarotreves}).  This suggests that we might be
able to define boundary values of holomorphic functions on a piecewise smooth domain, when the corners of the domain are generic, i.e., the corners are CR manifolds. This turns out to be correct, and we can define boundary values of holomorphic functions of polynomial 
growth on the class of  {\em piecewise smooth domains with generic corners.} The boundary value is realized as a de Rham current in the ambient complex manifold, a well-known formalism in complex analysis (see \cite{hala1}). 

The main thrust of this paper is to study global holomorphic extension properties of ``CR'' boundary currents on piecewise smooth domains, generalizing the classical Bochner-Hartogs theorem.  We give a complete solution of this problem for products of smoothly bounded domains. This may be considered a generalization of one of the earliest results in several complex variables due to Hartogs, the extension of a holomorphic function from a neighborhood of the boundary of a polydisc to the whole polydisc (see \cite[Theorem~1, page 12] {narasimhan}, and also \cite{landucci}). The invariant nature 
of the de Rham currents allows us  to state our results for domains in general complex manifolds, without any cohomological constraints. 

\subsection{Main Results}
Let $\Omega$ be a relatively compact  domain in a complex  manifold  $\Mm$ that may be written as 
\begin{equation}\label{eq-omegaint}
 \Omega = \bigcap_{j=1}^N \Omega_j,
\end{equation}
where each $\Omega_j\subset\Mm$ is a  smoothly bounded domain.  If  for each subset $S\subset\{1,\dots,N\}$ the intersection $B_S=\bigcap_{j\in S} b\Omega_j$, if non-empty,
is a CR manifold of CR-dimension $n-\abs{S}$, we say that  $\Omega$ is  a {\em domain with generic corners.}
Domains with generic corners are significant in many areas of complex analysis, see \cite{webster82, forst93,barrettduality, chak-kaushal2}. The most important examples of  domains with generic corners  are the {\em  product domains. } 

Impose on $\Mm$ any metric compatible with its topology (we assume that all manifolds appearing in this paper are countable at infinity). Throughout the paper we denote by ${\rm dist}(z,X)$ the distance from a point $z\in\Mm$ to a set $X$ induced by the chosen metric.
If $\Omega\Subset\Mm$ is a relatively compact domain,
then a holomorphic $f\in \mathcal{O}(\Omega)$ is said to be of {\em polynomial growth} if there is a $C>0$ and $k\geq 0$ such that we have for each $z\in \Omega$ that
\[ \abs{f(z)}\leq \frac{C}{\dist(z,\partial\Omega)^k}.\]
We denote the space of holomorphic functions of polynomial growth on $\Omega$ by $\mathcal{A}^{-\infty}(\Omega)$. This space has a natural topology (cf. Section~\ref{sec-ainfty} below). We can show that holomorphic functions of polynomial growth on domains with generic corners have boundary values in the sense of distributions:

\begin{thm}\label{thm-bcexistence} Let $\Omega$ be a domain with generic corners in a complex manifold $\Mm$, and let $f\in \mathcal{A}^{-\infty}(\Omega)$. There is a $(0,1)$-current $\bc f \in \Dd'_{0,1}(\Mm)$ such that the following 
holds. If $U$ is a coordinate neighborhood of $\Mm$, and $\psi\in\Dd^{n,n-1}(\Mm)$  is a smooth $(n,n-1)$ form which has support
in $U$, and there is a vector $v\in \cx^n$ such that in the coordinates on $U$, the vector $v$ points outward from $\Omega$ along each $\partial\Omega_j$  inside $U$,
 then we have
 \begin{equation}\label{eq-bc}
 \pair{\bc f}{\psi}= \lim_{\epsilon\downarrow 0}\int_{\partial\Omega} f_\epsilon \psi ,
\end{equation}
where   $f_\epsilon(z)= f(z-\epsilon v)$.
\end{thm}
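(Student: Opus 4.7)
The plan is to construct $\bc f$ by defining it locally via the right-hand side of \eqref{eq-bc}, showing the limit exists, is continuous in $\psi$, and is independent of the outward $v$, then gluing via a partition of unity. First I would cover $\Mm$ by coordinate charts $\{U_\alpha\}$ on each of which the generic corner hypothesis guarantees the existence of a constant outward vector $v_\alpha \in \cx^n$: at any point $p\in b\Omega$, the defining differentials $\partial \rho_j(p)$ that vanish there are $\cx$-linearly independent, so the common outward cone $\{v : \Re\langle v,\partial\rho_j(p)\rangle > 0\}$ is open and nonempty. A subordinate partition of unity reduces the construction to the local statement on a single patch $U$ with outward direction $v$.

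The main analytical step is the local existence of the limit. Writing $L_v := \sum_j v_j\,\partial/\partial z_j$, Stokes's theorem (using $\dbar f_\epsilon = 0$ and the fact that $d\psi = \dbar\psi$ by bidegree) gives
\[ \int_{\partial\Omega} f_\epsilon\psi = \int_\Omega f_\epsilon\,\dbar\psi \]
for small $\epsilon > 0$. Next, after shrinking $U$ if needed, construct an iterated holomorphic primitive of $f$ in the direction $v$: a sequence $F_0 = f, F_1, \ldots, F_m$ of holomorphic functions on $\Omega\cap U$ with $L_v F_{j+1} = F_j$, obtained by integrating along complex lines parallel to $v$ inside $\Omega\cap U$. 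Each such integration lowers the polynomial growth exponent by one, so for $m$ exceeding the growth order $k$ of $f$, $F_m$ extends continuously to $\overline{\Omega\cap U}$. Iterating integration by parts in $L_v$ in the identity above, using that $L_v$ commutes with $\dbar$ on scalars and that $\dbar F_{m,\epsilon}=0$ (where $F_{m,\epsilon}(z):=F_m(z-\epsilon v)$), one reduces $\int_\Omega f_\epsilon\,\dbar\psi$ to a combination of interior and boundary integrals involving only the continuous function $F_m$ paired against smooth compactly supported forms of appropriate bidegrees. Uniform convergence of $F_{m,\epsilon}\to F_m$ on $\overline{\Omega\cap\supp\psi}$ combined with dominated convergence then yields the existence of $\lim_{\epsilon\downarrow 0}\int_{\partial\Omega} f_\epsilon\psi$.

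Continuity of this functional in $\psi$ is inherited from the finitely many derivatives of $\psi$ appearing in the reduction, yielding a local current on $U$. To prove independence of $v$, join any two admissible $v_0, v_1$ by $v_t := (1-t)v_0 + tv_1$ (still outward for $t\in[0,1]$) and consider $G_\epsilon(t) := \int_{\partial\Omega} f(z - \epsilon v_t)\psi(z)$. Differentiating under the integral yields $G_\epsilon'(t) = -\epsilon\int_{\partial\Omega}(L_{v_1-v_0} f)(z - \epsilon v_t)\,\psi$; applying the just-established existence to $L_{v_1-v_0} f \in \mathcal{A}^{-\infty}(\Omega)$ shows that the boundary integral on the right has a finite limit as $\epsilon\downarrow 0$, uniformly in $t$, so the factor of $\epsilon$ forces $G_\epsilon'(t) \to 0$ uniformly and $G(t) := \lim_\epsilon G_\epsilon(t)$ is constant. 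A partition of unity subordinate to $\{U_\alpha\}$ then glues the local definitions into the global current $\bc f \in \Dd'_{0,1}(\Mm)$.

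The main technical obstacle is making the iterated integration by parts rigorous on the singular domain $\Omega$, whose boundary stratifies into smooth faces $\partial\Omega_j\setminus\bigcup_{l\ne j}\partial\Omega_l$, corner strata $B_{\{i,j\}}$, and so on. The reduction to an expression involving only $F_m$ hinges on the fact that corner strata $B_S$ with $|S|\ge 2$ have real codimension $\ge 2$ in $\Mm$ and thus contribute nothing to Stokes integrals of smooth compactly supported $(n,n-1)$-forms; this is precisely where the generic corner hypothesis is essential. The same transversality guarantees that the slices of $\Omega$ in the $v$-direction are well enough behaved to support the primitive construction.
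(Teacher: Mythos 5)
Your reduction of the boundary pairing to the interior integral $\int_\Omega f_\epsilon\,\dbar\psi$ via Stokes is fine (this is exactly how the paper passes between \eqref{eq-bc} and its canonical-extension formula), but the central analytic step afterwards has a genuine gap. If you integrate by parts over $\Omega$ with the constant $(1,0)$ field $L_v$, each step produces, besides the interior term, a boundary integral over $\partial\Omega$ carrying the intermediate primitive $F_{j,\epsilon}$, $1\le j\le m-1$, against a smooth form, and there is no factor forcing these boundary terms to vanish. Since $F_j$ still blows up like $\dist(\cdot,\partial\Omega)^{-(k-j)}$, on the shifted boundary $F_{j,\epsilon}$ is of size $\epsilon^{-(k-j)}$, so these intermediate boundary terms do not converge individually as $\epsilon\downarrow 0$; they can only be controlled in combination with everything else, which is circular. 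Hence the claimed reduction ``to a combination of interior and boundary integrals involving only the continuous function $F_m$'' is not achieved by iterated integration by parts in $L_v$, and this is precisely the step that carries the whole theorem. (Trying to recurse on the bad boundary terms does not close either: for a general smooth $(2n-1)$-form the term $\partial f_\epsilon\wedge\chi$ no longer drops out by bidegree, so the growth order goes up, not down.) The paper avoids this by Barrett's device, which is dual to yours: instead of anti-differentiating $f$ along a holomorphic direction, one chooses $(0,1)$ vector fields $T_j$ with $T_jr_k=\delta_{jk}$ (this is where genericity, i.e. $\bigwedge_{j\in S}\dbar r_j\neq0$, really enters — not mainly the codimension-two Stokes remark you make) and writes $f_\epsilon=\frac1{s!}T^s(r^sf_\epsilon)$, using $Tf_\epsilon=0$. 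Then every boundary term produced by integration by parts carries a factor $r^j$, $j\ge1$, hence vanishes identically, and the surviving interior integral involves $r^sf_\epsilon\to r^sf$ uniformly. The resulting formula does not involve $v$, so independence of $v$ is automatic, and $\bc f:=-\dbar(\ce f)$ plus Stokes gives \eqref{eq-bc}.

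Two smaller points, subordinate to the main one: lowering the growth exponent by one per integration gives only boundedness of $F_m$ when $m>k$, and boundedness of a holomorphic function does not by itself yield a continuous extension to $\overline{\Omega\cap U}$ (you need at least one more integration and a uniform-continuity argument); and in your $v$-independence argument the bound on $\int_{\partial\Omega}(L_{v_1-v_0}f)(z-\epsilon v_t)\psi$ must be uniform in $t\in[0,1]$, which pointwise existence of the limit for each fixed $v_t$ does not give without tracking constants. Both are fixable, but neither matters until the convergence of the interior limit is actually established.
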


We refer to $\bc f$ as the {\em boundary current} induced by the holomorphic function $f$ of polynomial growth. The question naturally arises of characterizing the range of the map $\bc$, i.e., describing which currents $\gamma\in \Dd'_{0,1}(\Mm)$ arise as boundary values of 
holomorphic functions of polynomial growth on $\Omega\Subset\Mm$, where $\Omega$ is a domain with generic corners in $\Mm$. There is a simple 
necessary condition on all boundary currents which we first note.  We say that a $(0,1)$-current $\gamma\in \Dd'_{0,1}(\Mm)$ on a complex manifold  $\Mm$   satisfies the  {\it Weinstock orthogonality condition} (cf. \cite{weinstock1})
with respect to a domain $\Omega\subset \Mm$, or simply the Weinstock condition, if for $\omega\in \Dd^{n,n-1}(\Mm)$,
\begin{equation}\label{eq-weinstock}
\dbar\omega=0 {\rm\ on\ } \overline{\Omega} \ \ \Longrightarrow\ \ \pair{\gamma}{\omega}=0.
\end{equation}
This is a generalization of the usual tangential Cauchy-Riemann equations 
for the boundary values of holomorphic functions, in fact, for domains in $\mathbb C^n$ with connected complement, the Weinstock condition is equivalent to $\gamma$ being $\overline\partial$-closed (see the proof of Corollary~\ref{cor-bochnerhartogs}  below). In Proposition~\ref{prop-weinstock} we show that the Weinstock orthogonality is a necessary condition for $\gamma$ to be the boundary value of a function $f\in \mathcal{A}^{-\infty}(\Omega)$. 
 
Now we consider the case of smoothly bounded domains. If $\Omega\Subset \Mm$ is a smoothly bounded domain,
we define a subspace $\mathcal{X}^{0,1}_\Omega(\Mm)$ of $\Dd'_{0,1}(\Mm)$ as follows. A current 
$\gamma\in \Dd'_{0,1}(\Mm)$ belongs to $\mathcal{X}^{0,1}_\Omega(\Mm)$ if and only if $\gamma$ satisfies 
the following two conditions:
\begin{enumerate}\item 
$\gamma$ satisfies the Weinstock condition with respect to $\Omega$.
\item There is a {\it face distribution} $\alpha\in \Dd'_0(\partial\Omega)$ which induces $\gamma$ in the following way: if $\iota:\partial\Omega\to \Mm$ is the inclusion map, we have 
\begin{equation}\label{eq-facedistn}
\gamma=\iota_*(\alpha)^{0,1}.
\end{equation}
\end{enumerate}
Here $\iota_*$ is the pushforward operation on currents by $\iota$ (see \eqref{eq-pushforward} below) and for a 1-current $\theta$ on a complex manifold, we write  \[ \theta=\theta^{0,1}+\theta^{1,0},\] the decomposition of $\theta$ into parts of bidegree $(0,1)$ and $(1,0)$. The space $\mathcal{A}^{-\infty}(\Omega)$ 
has a natural topology, which is defined formally in Section~\ref{sec-ainfty}.  Further $\mathcal{X}^{0,1}_\Omega(\Mm)$ is a closed subspace of the space $\Dd'_{0,1}(\Mm)$, and therefore carries 
the subspace topology. We have the following:

\begin{thm}\label{thm-smoothcase} 
Let $\Omega\Subset\Mm$ be a domain with $\mathcal{C}^\infty$-smooth boundary. Then the map
\[ \bc:\mathcal{A}^{-\infty}(\Omega)\to \mathcal{X}^{0,1}_\Omega(\Mm)\]
is an isomorphism of topological vector spaces.
\end{thm}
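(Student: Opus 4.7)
The plan is to verify in sequence the four requirements for $\bc$ to be a topological isomorphism: (i) that $\bc$ maps into $\mathcal{X}^{0,1}_\Omega(\Mm)$, (ii) that $\bc$ is continuous, (iii) that $\bc$ is bijective, and (iv) that its inverse is continuous. Because $\partial\Omega$ is a single smooth hypersurface, I expect the argument to reduce to the classical theory of distributional boundary values on one side of a smooth hypersurface (Tillmann, Martineau, K\"othe) combined with the concrete structure encoded in the definition of $\mathcal{X}^{0,1}_\Omega(\Mm)$.

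For the forward direction, given $f\in\mathcal{A}^{-\infty}(\Omega)$, I would first build a face distribution $\alpha_f \in \Dd'_0(\partial\Omega)$ by the formula $\pair{\alpha_f}{\varphi} = \lim_{\epsilon\downarrow 0}\int_{\partial\Omega} f_\epsilon\varphi\,d\sigma$ in local coordinate patches of the type appearing in Theorem~\ref{thm-bcexistence}, then patch via partition of unity (independence of the outward vector $v$ being a standard edge-of-the-wedge fact). A direct comparison with \eqref{eq-bc} yields $\bc f = \iota_*(\alpha_f)^{0,1}$, which is clause~(2) of $\mathcal{X}^{0,1}_\Omega(\Mm)$, while clause~(1), the Weinstock condition, is Proposition~\ref{prop-weinstock}. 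Continuity of $\bc$ is immediate from the polynomial-growth seminorms on $\mathcal{A}^{-\infty}(\Omega)$, which directly control the defining integrals. Injectivity then reduces to classical boundary uniqueness for polynomially growing holomorphic functions: if $\alpha_f = 0$, the one-sided edge-of-the-wedge theorem forces $f\equiv 0$.

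The crux is surjectivity. Given $\gamma = \iota_*(\alpha)^{0,1}$ satisfying Weinstock, testing against $\dbar$-exact forms first gives $\dbar\gamma = 0$ on $\Mm$, and testing against $\dbar$-closed forms on $\overline\Omega$ yields the tangential CR equations and orthogonality relations for $\alpha$. To extract $f$, I would solve $\dbar u = \gamma$ in a tubular neighborhood of $\partial\Omega$ using a Bochner--Martinelli kernel in each coordinate chart, patching via a partition of unity subordinate to $\partial\Omega$; differences between patched local solutions are holomorphic in a two-sided neighborhood of $\partial\Omega$ and therefore do not affect the boundary current. The interior restriction $f := u|_\Omega$ is holomorphic, and a Cauchy estimate in normal coordinates converts the distributional order of $\alpha$ into the polynomial boundary growth needed for $f\in\mathcal{A}^{-\infty}(\Omega)$. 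The equality $\bc f = \gamma$ reduces to the jump formula across $\partial\Omega$, with Weinstock being exactly what forces the exterior piece of $u$ to vanish (up to a globally holomorphic correction), so that the jump is $\alpha$ rather than $\alpha$ plus some outward boundary value.

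The main obstacle I anticipate is this global construction on a complex manifold $\Mm$ where no global $\dbar$-theory is available, combined with the quantitative bookkeeping needed for continuity of $\bc^{-1}$. My strategy is to localize everything to coordinate patches of $\partial\Omega$ and glue via partition of unity, so that all estimates reduce to standard kernel estimates in $\cx^n$ and can be tracked explicitly to give continuity of $\gamma\mapsto f$ in the topologies of Section~\ref{sec-ainfty}. An alternative, less constructive route would be to invoke the open mapping theorem once both spaces are identified as sufficiently nice locally convex spaces, but the explicit construction seems preferable for its quantitative content and for later use in the piecewise smooth case.
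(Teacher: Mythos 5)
Your overall frame (map into $\mathcal{X}^{0,1}_\Omega(\Mm)$, continuity, injectivity via the jump/uniqueness, then surjectivity, then continuity of the inverse via DFS/open-mapping machinery) matches the paper for everything except the surjectivity step, and it is exactly there that your proposal has a genuine gap. Solving $\dbar u=\gamma$ only in a tubular neighborhood of $\partial\Omega$ by Bochner--Martinelli kernels cannot prove surjectivity, for two reasons. First, the patching itself does not work as described: multiplying local solutions by a partition of unity and summing destroys the equation $\dbar u=\gamma$ (you pick up terms $\dbar\chi_i\wedge u_i$), so you would have to solve a further $\dbar$-problem on the collar, which again needs global cohomological input. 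Second, and more fundamentally, even a genuine solution on a two-sided collar only produces a function holomorphic in a one-sided neighborhood of $\partial\Omega$ inside $\Omega$; producing $f\in\mathcal{A}^{-\infty}(\Omega)$ holomorphic on \emph{all} of $\Omega$ is the global (Bochner--Hartogs type) content of the theorem, and your argument never uses the Weinstock condition in its global form to achieve it --- you only extract its local consequence (tangential CR equations), which is empty when $n=1$ and which for $n\geq 2$ gives only local one-sided extension near the boundary. Likewise, the assertion that Weinstock ``forces the exterior piece of $u$ to vanish'' is not justified by anything local.

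The paper's proof is global precisely at this point: after puncturing $\Mm$ at one point in each relatively compact component of $\Mm\setminus\overline{\Omega}$ (so that no component of the complement is compact), it invokes Malgrange's vanishing $H^{n,n}(\Mm)=0$ for a connected noncompact $\Mm$, Serre's identification of the transpose of $\dbar:\Ee^{n,n-1}(\Mm)\to\Ee^{n,n}(\Mm)$ with $\dbar$ on compactly supported currents, and the closed range theorem, to conclude from the \emph{global} Weinstock orthogonality that there is a compactly supported distribution $u$ on $\Mm$ with $\dbar u=-\gamma$. Holomorphy of $u$ off $\partial\Omega$, compact support, and the absence of compact complementary components then force $u$ to vanish outside $\overline{\Omega}$; $f=u|_\Omega$ has polynomial growth by Proposition~\ref{prop-extension}; and only at the very end does a local Bochner--Martinelli solution and the jump formula enter, to verify $\bc f=\gamma$. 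For the topological statement, note also that the ordinary open mapping theorem does not apply directly since $\mathcal{A}^{-\infty}(\Omega)$ is not Fr\'echet; the paper uses the DFS versions (Propositions~\ref{prop-ainftydfs} and~\ref{res-dfsinverse}), which is the clean way to get continuity of $\bc^{-1}$ without kernel estimates.
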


When $\Mm=\cx^n$, Theorem~\ref{thm-smoothcase}, allows us to obtain the distributional 
version of the Bochner-Hartogs phenomenon, thus recapturing  a result of Straube 
(see \cite[Thm~2.2]{straube84}).

\begin{cor}\label{cor-bochnerhartogs} 
Let $n\geq 2$, and let $\Omega\Subset\cx^n$ be a smoothly bounded domain such that 
$\cx^n\setminus\overline{\Omega}$ is connected. Suppose that $\gamma\in \Dd^{n,n-1}(\mathbb C^n)$
is $\overline\partial$-closed and~\eqref{eq-facedistn} holds. Then there is a holomorphic 
$f$ on $\Omega$ of polynomial growth, such that $\gamma=\bc f$.
\end{cor}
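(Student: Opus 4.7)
The plan is to apply Theorem~\ref{thm-smoothcase}: it suffices to show that $\gamma$ lies in $\mathcal{X}^{0,1}_\Omega(\cx^n)$. Since the face distribution condition~\eqref{eq-facedistn} is already part of the hypothesis, only the Weinstock condition~\eqref{eq-weinstock} remains to be verified.

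Fix a test form $\omega\in\Dd^{n,n-1}(\cx^n)$ with $\dbar\omega=0$ on some open neighborhood $V$ of $\overline\Omega$. The first step is to solve $\dbar T_0=\gamma$ for a distribution $T_0$ on $\cx^n$; this is possible because $\cx^n$ is Stein and Dolbeault cohomology vanishes in positive degree at the level of currents (by regularization combined with H\"ormander's $L^2$ theory). Since $\gamma=\iota_*(\alpha)^{0,1}$ is supported in $\partial\Omega$, $T_0$ is $\dbar$-closed on $\cx^n\setminus\partial\Omega$, hence represented there by a holomorphic function by the standard regularity of the $\dbar$-operator on distributions. In particular $T_0|_{\cx^n\setminus\overline\Omega}\in\mathcal{O}(\cx^n\setminus\overline\Omega)$.

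The decisive step is Hartogs' extension theorem: because $n\geq 2$ and $\cx^n\setminus\overline\Omega$ is the connected complement of the compact set $\overline\Omega$, the function $T_0|_{\cx^n\setminus\overline\Omega}$ extends to an entire function $\tilde T\in\mathcal{O}(\cx^n)$. Setting $T=T_0-\tilde T$ one still has $\dbar T=\gamma$, but now $T$ vanishes on $\cx^n\setminus\overline\Omega$, so $\supp T\subset\overline\Omega$. The Weinstock pairing can then be evaluated as
\[
\pair{\gamma}{\omega}=\pair{\dbar T}{\omega}=\pm\pair{T}{\dbar\omega}=0,
\]
where the last equality holds because $\supp T\subset\overline\Omega\subset V$ is disjoint from $\supp\dbar\omega\subset\cx^n\setminus V$. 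This establishes the Weinstock condition for $\gamma$, so $\gamma\in\mathcal{X}^{0,1}_\Omega(\cx^n)$, and Theorem~\ref{thm-smoothcase} produces the desired $f\in\mathcal{A}^{-\infty}(\Omega)$ with $\bc f=\gamma$.

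The only genuinely non-routine input is Hartogs' theorem, and it is precisely here that the hypotheses $n\geq 2$ and the connectedness of $\cx^n\setminus\overline\Omega$ are essential, in order to promote the abstract $\dbar$-primitive $T_0$ to one with support in $\overline\Omega$. The remaining ingredients---solvability of $\dbar$ for currents on a Stein manifold and hypoellipticity of $\dbar$ on distributions---are classical; one only needs to be careful about the sign in the identity $\pair{\dbar T}{\omega}=\pm\pair{T}{\dbar\omega}$ and about the well-definedness of $T\cdot\dbar\omega$ as a compactly supported distribution, which follows from the disjointness of supports.
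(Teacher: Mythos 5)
Your overall strategy is sound, and it is genuinely different from the paper's. The paper verifies the Weinstock condition by quoting Weinstock's approximation theorem: since $\cx^n\setminus\overline{\Omega}$ is connected, any $\phi\in\Dd^{n,n-1}(\cx^n)$ with $\dbar\phi=0$ on $\Omega$ can be approximated in $\mathcal{C}^\infty_{n,n-1}(\overline{\Omega})$ by forms $\dbar\psi_\nu$ with $\psi_\nu\in\Dd^{n,n-2}(\cx^n)$, and then $\pair{\gamma}{\phi}=\lim\pair{\gamma}{\dbar\psi_\nu}=-\lim\pair{\dbar\gamma}{\psi_\nu}=0$ (using that $\gamma$ is supported in $\overline{\Omega}$; note $n\geq 2$ is what makes the approximation nontrivial, since $\Dd^{n,n-2}=0$ when $n=1$). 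You instead solve $\dbar T_0=\gamma$ globally, use holomorphicity of $T_0$ off $\partial\Omega$ and the Hartogs Kugelsatz (this is where your $n\geq 2$ and connectedness enter) to replace $T_0$ by a $\dbar$-primitive $T$ of $\gamma$ supported in $\overline{\Omega}$, and then test. Your route is more self-contained (classical Hartogs extension plus elliptic regularity, no approximation theorem), and it is close in spirit to the support argument already used in the surjectivity part of the paper's proof of Theorem~\ref{thm-smoothcase}; the paper's route is shorter but leans on the external approximation result of Weinstock.

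There is, however, one gap as written. The Weinstock condition \eqref{eq-weinstock}, which defines membership in $\mathcal{X}^{0,1}_\Omega(\cx^n)$ and hence is what you must verify to invoke Theorem~\ref{thm-smoothcase}, requires $\pair{\gamma}{\omega}=0$ for every $\omega\in\Dd^{n,n-1}(\cx^n)$ with $\dbar\omega=0$ \emph{on the closed set} $\overline{\Omega}$ (pointwise), not merely for those $\omega$ that are $\dbar$-closed on an open neighborhood $V$ of $\overline{\Omega}$, which is the hypothesis you fixed at the outset. For such an $\omega$ the supports of $T$ and $\dbar\omega$ need not be disjoint ($\supp\dbar\omega$ may accumulate on $\partial\Omega\subset\supp T$), so your final ``disjoint supports'' step does not literally apply, and since $T$ may have positive order, mere vanishing of $\dbar\omega$ on $\supp T$ is not by itself enough.

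Fortunately the repair is short and keeps your argument intact: $\dbar\omega$ is a smooth form vanishing identically on $\overline{\Omega}$, which is the closure of the open set $\Omega$; hence all derivatives of $\dbar\omega$ vanish on $\Omega$ and, by continuity, on all of $\overline{\Omega}$, i.e., $\dbar\omega$ vanishes to infinite order at every point of $\overline{\Omega}$. Your corrected primitive $T$ has compact support contained in $\overline{\Omega}$ and is therefore of some finite order $k$. By the standard theorem on distributions of finite order (a compactly supported distribution of order $k$ annihilates any $\mathcal{C}^k$ function whose derivatives up to order $k$ vanish on its support), it follows that $\pair{T}{\dbar\omega}=0$, and then $\pair{\gamma}{\omega}=\pm\pair{T}{\dbar\omega}=0$ exactly as you intended. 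With this one addition your proof is complete and correctly reduces the corollary to Theorem~\ref{thm-smoothcase}.
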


See \cite{range} for the history of the global holomorphic extension theorem for CR functions. Distributional analogs of other classical global holomorphic 
extension results (cf. \cite{kohnrossi}) can also be deduced from Theorem~\ref{thm-smoothcase}, using 
approximation properties of forms in particular manifolds.

While the problem of identifying boundary currents for an arbitrary domains with generic corners remains open, 
we are able to give a complete characterization of the range of the operator $\bc$ in the case when $\Omega$ is 
a {\em product domain}.  Let $\Mm_1,\dots,\Mm_N$ be complex manifolds, and let 
\begin{equation}\label{eq-mproduct}
\Mm=\Mm_1\times\dots\times\Mm_N
\end{equation}
 be their product as a complex manifold. For $j=1,\dots, N$ let $D_j\Subset\Mm_j$ be be a domain with $\mathcal{C}^\infty$-smooth boundary. By a product domain, we mean a domain  $\Omega$ of the form 
\begin{equation}\label{eq-omegaproduct}
\Omega=D_1\times\dots \times D_N.
\end{equation}
 For $j=1,\dots, N$, we define  a smoothly bounded domain  $\Omega_j\subset \Mm$ by setting
\begin{equation}\label{eq-omegaj}
\Omega_j = \Mm_1\times\dots \times D_j \times\dots\times\Mm_N,
\end{equation}
where  the $j$-th factor is $D_j$ and all other  factors are $\Mm_k$ . Then we can represent the domain $\Omega$ as an intersection \eqref{eq-omegaint}, and it is easy to see that each corner is a CR manifold, so product domains 
have generic corners. 

With $\Omega$ and $\Mm$ as above, we introduce a subspace $\mathcal{Y}^{0,1}_\Omega(\Mm)$ of
$\Dd'_{0,1}(\Mm)$. A current $\gamma\in \Dd'_{0,1}(\Mm)$ belongs to $\mathcal{Y}^{0,1}_\Omega(\Mm)$ if the 
following conditions are satisfied:
\begin{enumerate}
	\item $\gamma$ satisfies the Weinstock condition \eqref{eq-weinstock} with respect to $\Omega$.
	
	\item Suppose that the piecewise smooth domain $\Omega$ is represented as an intersection of smoothly bounded domains as in \eqref{eq-omegaint}. For each 
$j=1,\dots, N$, let 
\begin{equation}\label{eq-iotaj}
\iota^j:\partial\Omega_j\to \Mm
\end{equation}
 denote the inclusion map. There are distributions $\alpha_j\in \Dd'_0(\partial\Omega_j)$ with support in $\partial\Omega_j\cap \overline{\Omega}$ such that
we can write
\begin{equation}\label{eq-facewise}
\gamma=\sum_{j=1}^N\left(\iota^j_*(\alpha_j)\right)^{0,1}.
\end{equation}
We will call the distributions $\alpha_1,\dots, \alpha_N$ the {\em face distributions} associated with the current $\gamma$.

	\item The third condition, which we call {\it canonicality of face distributions} is somewhat technical, and will be fully explained below in Section~\ref{sec-y01}. Informally, it can be understood as follows. Given a function 
$f\in \mathcal{A}^{-\infty}(\Omega)$, there exists the extension of $f$ as a distribution in $\mathcal{D}'_0(\Mm)$
with the property that it vanishes outside $\overline \Omega$ and its values on $\partial \Omega$ are 
determined in a limit process from the values in $\Omega$, see Theorem~\ref{thm-ce}. This will be called the 
{\it canonical extension} of $f$. A similar canonical extension exists for the distributions 
$\alpha_j\in \Dd'_0(\partial\Omega_j)$ defined by~\eqref{eq-facewise}. The condition now
is that the canonical extensions of $\alpha_j$ agree with $\alpha_j$, see \eqref{eq-canonicality} below for the exact statement. In particular, this condition ensures that one can talk about boundary values of the face distributions themselves along higher codimensional strata. 
\end{enumerate}
We note that all three conditions above are satisfied by boundary currents of holomorphic functions. In fact,
we have the following characterization of the distributional boundary values of holomorphic functions on product domains:

\begin{thm}\label{thm-product} Let $\Omega$ be a product domain as above. Then for each $f\in \mathcal{A}^{-\infty}(\Omega)$, we have $\bc f\in \mathcal{Y}^{0,1}_\Omega(\Mm)$, and 
the map
\[ \bc: \mathcal{A}^{-\infty}(\Omega)\to \mathcal{Y}^{0,1}_\Omega(\Mm)\]
is an isomorphism of topological vector spaces.
\end{thm}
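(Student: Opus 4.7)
The theorem asks for three things at once: that $\bc f$ always lies in $\mathcal{Y}^{0,1}_\Omega(\Mm)$, that $\bc$ is a bijection onto this subspace, and that the induced map is a homeomorphism. I would separate these and treat membership, injectivity, surjectivity, and the topological part in turn, with surjectivity being the real work.

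For membership, the Weinstock condition is already Proposition~\ref{prop-weinstock}, so I only need to produce the face distributions and verify canonicality. To extract $\alpha_j$, I would localize the limit \eqref{eq-bc} using a partition of unity adapted to the product coordinates: in a chart near a point of $\Sigma_j=\partial\Omega_j\cap\overline{\Omega}$, one may choose the translation vector $v$ in Theorem~\ref{thm-bcexistence} to be outward from $\Omega_j$ alone and tangent to every other $\partial\Omega_k$, so the integrand $f_\varepsilon\psi$ localizes to $\partial\Omega_j$ and the limit defines a distribution on $\partial\Omega_j$ whose push-forward contributes the $j$-th summand of \eqref{eq-facewise}. Canonicality then follows because each $\alpha_j$ is, by construction, the trace on $\partial\Omega_j$ of the canonical extension of $f$ provided by Theorem~\ref{thm-ce}, and canonical extension commutes with iterated boundary-value operations. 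Continuity of $\bc$ is inherited from the continuity of these construction steps, and injectivity is obtained by noting that if $\bc f=0$ then at any smooth point $p\in\Sigma_j\setminus\bigcup_{k\ne j}\Sigma_k$ the domain $\Omega$ coincides locally with the smoothly bounded $\Omega_j$, so Theorem~\ref{thm-smoothcase} applied in a neighborhood of $p$ gives that $f$ extends holomorphically across $p$, and then the identity principle forces $f\equiv 0$.

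Surjectivity is the heart of the matter, and I would prove it by induction on $N$, the base case $N=1$ being Theorem~\ref{thm-smoothcase}. For the inductive step write $\Omega=\Omega'\times D_N$ with $\Omega'=D_1\times\dots\times D_{N-1}\subset\Mm'$. Given $\gamma\in\mathcal{Y}^{0,1}_\Omega(\Mm)$ with face distributions $\alpha_1,\dots,\alpha_N$, the canonicality condition makes it legitimate to regard $\alpha_N$, via the Schwartz kernel theorem, as a distribution on $\partial D_N$ depending on a parameter in $\overline{\Omega'}$; for this parameter family I would apply Theorem~\ref{thm-smoothcase} in the $D_N$-variable to obtain a holomorphic function $F_N$ of polynomial growth on $\Omega'\times D_N$ with the correct boundary behavior along $\Sigma_N$. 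The remaining face distributions $\alpha_1,\dots,\alpha_{N-1}$ are handled, after restriction to $\partial D_N$, by the inductive hypothesis applied to $\Omega'\subset\Mm'$, fiberwise in $z_N$; the Weinstock condition on $\gamma$ guarantees that these fiberwise solutions themselves depend holomorphically on $z_N$, yielding further pieces on $\Omega$. Finally, I would combine these pieces by an inclusion--exclusion over the faces and verify, using the canonicality condition to match traces on every corner $B_S$, that the resulting sum $f$ lies in $\mathcal{A}^{-\infty}(\Omega)$ and satisfies $\bc f=\gamma$.

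The principal obstacle is the corner matching in the last step: the individual pieces constructed from distinct $\alpha_j$ need not separately satisfy the Weinstock condition on the enclosing slab $\Omega_j$, so one cannot just sum them, and one must show that their pairwise discrepancies extend holomorphically across the shared face and drop out after the inclusion--exclusion. This is precisely where the canonicality of face distributions is indispensable, since it supplies the iterated traces whose agreement on each $B_S$ makes the bookkeeping close. Once bijectivity is established, the topological isomorphism follows from an open-mapping argument for the natural topologies on $\mathcal{A}^{-\infty}(\Omega)$ and on the closed subspace $\mathcal{Y}^{0,1}_\Omega(\Mm)\subset\Dd'_{0,1}(\Mm)$; the continuity of $\bc^{-1}$ reduces to continuity of the fiberwise smooth case together with continuity of the canonical-extension procedure of Theorem~\ref{thm-ce}.
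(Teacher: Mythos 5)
Your outline handles membership, injectivity, and the topological statement in essentially the right way (the last via the DFS machinery of Proposition~\ref{res-dfsinverse}), but the surjectivity argument --- which you correctly identify as the heart of the theorem --- has a genuine gap at exactly the point you flag, and the gap is structural rather than technical. Your plan builds one piece from $\alpha_N$ (smooth case with parameters), further pieces from $\alpha_1,\dots,\alpha_{N-1}$ fiberwise by induction, and then hopes to assemble $f$ by an inclusion--exclusion over faces with corner corrections on the strata $B_S$. No mechanism is offered for why the discrepancies cancel, and in fact summing is the wrong move: under the canonicality condition, the datum on a \emph{single} extended face already determines the entire function. This is the content of Proposition~\ref{prop-alpha1alpha2} and \eqref{eq-gammakclaim}, which show $\gamma_k|_{\Mm_k\times\widehat{D}_k}\in\mathcal{X}^{0,1}_{D_k}(\Mm_k)\csor\mathcal{A}^{-\infty}(\widehat{D}_k)$, so that $f_k=(\bc_k^{-1}\csor\id)\bigl(\gamma_k|_{\Mm_k\times\widehat{D}_k}\bigr)$ is already a full candidate solution, and canonicality in the form \eqref{eq-canonicality2} gives that the $k$-th component of $\bc f_k$ equals $\gamma_k$ on the nose. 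What then has to be proved is not a matching of pieces but that the $N$ candidates $f_1,\dots,f_N$ coincide; the paper gets this from the edge compatibility $\dbar_{\Mm_\ell}\gamma_k=\dbar_{\Mm_k}\gamma_\ell$ (Proposition~\ref{prop-proddbarclosed}, a consequence of the Weinstock condition) together with the injectivity of the mixed operators $\mathsf{b}_{k,\ell}=\ce\csor\dots\csor\bc\csor\dots\csor\bc\csor\dots\csor\ce$, which forces $f_k=f_\ell$. Note also that your assertion that ``the Weinstock condition guarantees the fiberwise solutions depend holomorphically on $z_N$'' is not enough as stated: one needs holomorphy \emph{with polynomial growth} in the parameter to land in $\mathcal{A}^{-\infty}(\Omega)$, and this requires the Banach-valued extension argument (Proposition~\ref{prop-extension}, nuclearity, and the kernel theorem) that yields \eqref{eq-alphajstruct}, plus the identification $\widetilde{\mathcal{A}}^{-\infty}=\mathcal{A}^{-\infty}$ on the lower-dimensional product, which is itself part of the induction.

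Two smaller points. First, in your extraction of the face distributions you propose translating by a vector $v$ outward from $\Omega_j$ and \emph{tangent} to the other faces; near a corner $z-\epsilon v$ then need not lie in $\Omega$, so $f_\epsilon$ is not defined there, while the closed face $F_j$ does contain the corners. Theorem~\ref{thm-bcexistence} requires $v$ outward along every face meeting the chart, and the paper instead isolates the $j$-th contribution by bidegree from the standard decomposition \eqref{eq-01dec2} (Proposition~\ref{prop-bcfacewise}). Second, your justification of canonicality --- ``canonical extension commutes with iterated boundary-value operations'' --- is precisely the statement to be proved, not a known fact; Proposition~\ref{prop-y01target} proves it by approximating $f$ by translates smooth up to the boundary, for which the canonical extension is extension by zero, and passing to the limit using continuity of $\ce$. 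As written, that part of your argument is circular.
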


In the last section, we relate this notion of boundary value with the more usual notion of boundary value on the \v{S}ilov 
boundary. 
 
\bigskip

\noindent{\bf Acknowledgments.} We thank Christine Laurent-Thi\'{e}baut and David Barrett for sharing with us many of the ideas on which this paper is based. We also thank Sagun Chanillo,  Evgeny Poletsky,  Jean-Pierre Rosay,  Mei-Chi Shaw and Emil Straube for interesting discussions on the topic of this paper.

\section{Existence and basic properties of boundary currents}

\subsection{The space  $\mathcal{A}^{-\infty}(\Omega, \Xx)$}\label{sec-ainfty}
We consider holomorphic functions and distributions with values in a  Banach space, see \cite{trevesbook} 
for basic facts on Banach-valued holomorphic functions and distributions. This will be needed in Proposition~\ref{prop-extension}, which will be later used in the proof of Proposition~\ref{prop-alpha1alpha2}.
Let $\Mm$ be a complex manifold, and let $\Omega\Subset\Mm$ be a relatively compact domain. We endow the
manifold $\Mm$ with an arbitrary Riemannian metric. All distances are measured with respect to this metric.  Let $\Xx$ be 
a Banach space, and denote by $\mathcal{O}(\Omega, \Xx)$ the space of all 
$\Xx$-valued holomorphic functions on $\Omega$. We say that a 
function $f\in \mathcal{O}(\Omega, \Xx)$ is of {\em polynomial growth} if there is a non-negative integer $k$ and a $C>0$ such that 
 \begin{equation}\label{eq-slowgrowth} \norm{f(z)}_\Xx\leq \frac{C}{\dist(z,b\Omega)^k}.\end{equation}
For a fixed $k$, we denote by $\mathcal{A}^{-k}(\Omega, \Xx)$ the space of $\Xx$-valued holomorphic functions on $\Omega$ which satisfy the estimate 
\eqref{eq-slowgrowth}. Then $\mathcal{A}^{-k}(\Omega, \Xx)$ is a Banach space with the norm 
\[ \norm{f}_{\mathcal{A}^{-k}} = \sup_{z\in\Omega} \left\{ \norm{f(z)}_\Xx \dist(z,b\Omega)^k\right\}.\]
We denote the space of all $\Xx$-valued holomorphic functions of polynomial growth on $\Omega$ by $\mathcal{A}^{-\infty}(\Omega,\Xx)$:
\begin{equation}\label{eq-ainfty}
  \mathcal{A}^{-\infty}(\Omega,\Xx)= \bigcup_{k=0}^\infty \mathcal{A}^{-k}(\Omega,\Xx),
\end{equation}
and endow $\mathcal{A}^{-\infty}(\Omega,\Xx)$ with the inductive limit topology.

\subsection{Notation for currents} For  de Rham currents  we will use the following standard notation and terminology. If $\Mm$ is a differentiable manifold of dimension $N$, we denote by $\Dd^q(\mathfrak{M})$  the space of smooth compactly supported $q$-forms on $\mathfrak{M}$, which is a topological vector space with the standard inductive limit topology. We denote by $\Dd'_q(\Mm)$  (space of currents of degree $q$, or $q$-currents) the topological
 dual of the space $\Dd^{N-q}(\mathfrak{M})$,  endowed with the strong topology (the topology of uniform convergence on bounded subsets of $\Dd^{N-q}(\mathfrak{M})$, see \cite[p.~198ff]{trevesbook}.  A {\em distribution} is a $0$-current, and given a locally integrable function $u$ on a manifold $\Mm$, we identify
 $u$ with the distribution (i.e. 0-current)  $\phi\mapsto \int_\Mm \phi$, where $\phi\in \Dd^{\dim_{\rl}\Mm}(\Mm)$ is a compactly supported form of top degree on $\Mm$. If $\Xx$ is a Banach space, then an {\em $\Xx$-valued
distribution} is an element of the space $\Dd'_0(\Mm,\Xx)$ of continuous linear maps from $\Dd^N(\Mm)$ to $\Xx$. 
 
 When $\Mm$ is a complex manifold of complex dimension $n$, we let $\Dd^{p,q}(\mathfrak{M})$ be the space of smooth compactly supported $(p,q)$-forms, and 
 $\Dd'_{p,q}(\Mm)$  the space of $(p,q)$-currents, i.e., the dual (with the strong  topology) of  $\Dd^{n-p,n-q}(\mathfrak{M})$.  
If $f:\Mm\to \Nn$ is a mapping of smooth manifolds, and $\gamma$ is a current on $\Mm$, we denote by $f_*\gamma$ the {\em pushforward} of the 
current $\gamma$ by the map $f$. Recall that
\begin{equation}\label{eq-pushforward}
\pair{f_* \gamma}{\phi} = \pair{\gamma}{f^*\phi},
\end{equation}
where $\phi$ is a smooth compactly supported form of appropriate degree and $f^*$ denotes the pullback operator on forms.

\subsection{Distributional extensions}We now consider the problem of extending a holomorphic function 
defined on the domain $\Omega$ to a distribution defined on $\Mm$. A 0-current $F\in \Dd'_{0,0}(\Mm,\Xx)$ 
 will be called an  { \em ($\Xx$-valued) distributional extension} of $f$ if $F|_\Omega=f$.

Since $\Xx$ is a Banach space, any $\Xx$-valued current $\gamma$ is locally of finite order. This means that, after choosing an arbitrary Riemannian metric on $\Mm$, for each compact $K\subset \Mm$, there is a $C>0$ and an integer $k\geq 0$ (the local order
of $\gamma$ on $K$) such that for any test form $\phi$ of appropriate degree with with support in $K$ we have
\[ \norm{\langle \gamma,\phi\rangle}_{\Xx}\leq C \norm{\phi}_{\mathcal{C}^k(\Mm)},\]
where the $\mathcal{C}^k$-norm is defined with respect to the Riemannian metric on $\Mm$.
In particular, it follows that the distributional 
extension $F$, being compactly supported is of finite order on the whole of $\Mm$.

\begin{prop}\label{prop-extension}
If $f\in \mathcal{O}(\Omega,\Xx)$ admits a distributional extension $F\in \Dd'_{0,0}(\Mm,\Xx)$, 
then $f\in \mathcal{A}^{-\infty}(\Omega,\Xx)$.
\end{prop}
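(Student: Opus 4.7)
The plan is to express the value $f(z_0)$ at an interior point $z_0 \in \Omega$ as the pairing of the distributional extension $F$ against a localized bump test form, and then exploit the finite order of $F$ together with the standard scaling of $\mathcal{C}^k$-norms under dilation.

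First I would set the stage. Cover the compact set $\overline{\Omega}$ by finitely many holomorphic coordinate charts $\{U_\alpha\}$; on each $U_\alpha$ the Riemannian distance inherited from $\Mm$ is comparable to the Euclidean distance, with constants that are uniform in $\alpha$ by compactness. Let $K$ be a compact neighborhood of $\overline{\Omega}$ contained in the union of these charts. Since $\Xx$ is a Banach space, the $\Xx$-valued current $F$ is of finite order on $K$ (as recalled in the paragraph preceding the proposition): there exist $k \ge 0$ and $C>0$ such that
\[
\norm{\pair{F}{\phi}}_\Xx \le C \norm{\phi}_{\mathcal{C}^k(\Mm)}
\]
for every smooth form $\phi \in \Dd^{2n}(\Mm)$ with $\supp \phi \subset K$. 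Fix a standard nonnegative radial bump $\chi \in \mathcal{C}^\infty_c(\cx^n)$ supported in the unit Euclidean ball with $\int_{\cx^n}\chi\, dV = 1$.

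Next, for a point $z_0 \in \Omega$, let $r = c\cdot \dist(z_0, b\Omega)$, where $c \in (0,1)$ is chosen small enough (depending only on the Lebesgue number of the chart cover and the comparison of metrics) so that the Euclidean ball $B(z_0,r)$ in the appropriate chart is contained in $\Omega$ and in a single $U_\alpha$. By the mean value property applied to the $\Xx$-valued holomorphic function $f$ (which passes to Banach-valued holomorphic functions by pairing with functionals in $\Xx^*$),
\[
f(z_0) = \int_{\Omega} f(w)\, \chi\!\left(\tfrac{w-z_0}{r}\right) r^{-2n}\, dV(w).
\]
Set $\psi_{z_0,r}(w) = \chi\!\left(\tfrac{w-z_0}{r}\right) r^{-2n}\, dV(w)$, viewed as a smooth compactly supported $(n,n)$-form on $\Mm$ by extension by zero. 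Since $F\rvert_{\Omega} = f$ and $\supp \psi_{z_0,r} \subset \Omega$, the display above reads $f(z_0) = \pair{F}{\psi_{z_0,r}}$.

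Finally, a direct derivative count shows that each differentiation of $\chi((w-z_0)/r)$ contributes a factor of $r^{-1}$, and the volume form carries the uniform local bound from the chart; hence $\norm{\psi_{z_0,r}}_{\mathcal{C}^k(\Mm)} \le C_k\, r^{-(2n+k)}$ with $C_k$ independent of $z_0$. Combining this with the finite-order bound,
\[
\norm{f(z_0)}_\Xx = \norm{\pair{F}{\psi_{z_0,r}}}_\Xx \le C C_k\, r^{-(2n+k)} \le C'\, \dist(z_0,b\Omega)^{-(2n+k)},
\]
which is exactly the polynomial growth estimate \eqref{eq-slowgrowth}. Thus $f \in \mathcal{A}^{-(2n+k)}(\Omega,\Xx) \subset \mathcal{A}^{-\infty}(\Omega,\Xx)$. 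The only genuinely nontrivial points are the uniform comparability of the Riemannian and Euclidean distances across the finite chart cover and the correct scaling of the $\mathcal{C}^k$ norm of the test form; both are routine once the cover is fixed, and I do not anticipate a serious obstacle.
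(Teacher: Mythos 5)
Your argument is correct and is essentially the paper's own proof: Bell's device of pairing $F$ against a radial bump form scaled to a ball of radius comparable to $\dist(z,b\Omega)$, using the mean value property of the ($\Xx$-valued) holomorphic function together with the finite local order of $F$ and the $r^{-(2n+k)}$ scaling of the $\mathcal{C}^k$-norm. The only difference is that you carry out the reduction to Euclidean coordinates via a finite chart cover with uniform metric comparison, where the paper simply works in local holomorphic coordinates near a boundary point; this is a presentational, not a substantive, difference.
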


\begin{proof} After using a system of local holomorphic coordinates centered at a boundary point, it is sufficient to prove the 
result when $\Mm=\cx^n$.  We use a classic argument of Bell (\cite[Lemma~2]{bell_harm}.)  Let $\chi\in\mathcal{D}(\cx^n)$ be a compactly supported smooth radial  function with support in the unit ball   such that
$ \int \chi dV =1,$
and for a fixed $z$ in $\Omega$ define the form $\phi\in \Dd^{n,n}(\Mm)$ by
\[ \phi(w)= \left(\frac{{\rm dist}(z,\partial\Omega)}{2}\right)^{-2n}\cdot \chi\left( \frac{w-z}{\frac{1}{2}{\rm dist}(z,\partial\Omega)}\right) d{\rm vol},\]
where $d{\rm vol}$ is the standard volume form of $\cx^n$.
Then $\int\phi =1$ as well, and 
$\phi$ is radially symmetric about $z$. A direct computation reveals that there is a constant $C_0$  depending only on the function $\chi$ and independent of $z\in \Omega$ such that
\begin{equation}\label{eq-phinorm} \norm{\phi}_{\mathcal{C}^k(\cx^n)} \leq \frac{C_0}{{\rm dist}(z,\partial\Omega)^{k+2n}}.\end{equation}
Assume that $F$ is of order  $k$. Then, by the mean value theorem,
\begin{align*}
\norm{f(z)}_\Xx &= \norm{\int_\Omega f \phi }_\Xx 
= \norm{\langle F,\phi \rangle}_\Xx 
\leq C\norm{\phi}_{\mathcal{C}^k(\cx^n)} 
\leq \frac{C}{{\rm dist}(z,\partial\Omega)^{k+2n}}.
\end{align*}\end{proof}

\subsection{ $\mathcal{A}^{-\infty}(\Omega,\Xx)$ as a DFS space}\label{sec-dfs} Recall that a {\em DFS space} is a topological vector space isomorphic to the strong dual of a Fr\'{e}chet-Schwartz space.  We note the following two facts:
\begin{prop} \label{prop-ainftydfs} If $\Omega$ is a relatively compact domain in $\Mm$, then $\mathcal{A}^{-\infty}(\Omega,\Xx)$ is a 
DFS space for any Banach space $\Xx$.
\end{prop}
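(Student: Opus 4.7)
The plan is to exhibit $\mathcal{A}^{-\infty}(\Omega,\Xx)$ as a \emph{Silva space}, that is, a countable inductive limit of Banach spaces in which each connecting map is a compact operator; by the classical structure theorem for such (LB)-spaces, any Silva space is canonically isomorphic to the strong dual of a Fr\'{e}chet--Schwartz space, and is therefore DFS. Since by construction
\[
\mathcal{A}^{-\infty}(\Omega,\Xx) \;=\; \varinjlim_{k\to\infty}\, \mathcal{A}^{-k}(\Omega,\Xx),
\]
with each $\mathcal{A}^{-k}$ a Banach space, the task reduces to checking that the natural inclusion $\iota_k:\mathcal{A}^{-k}(\Omega,\Xx) \hookrightarrow \mathcal{A}^{-(k+1)}(\Omega,\Xx)$ is compact for every $k \geq 0$.

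To prove compactness of $\iota_k$, I would take a sequence $(f_n)$ with $\norm{f_n}_{\mathcal{A}^{-k}} \leq 1$, and for each $\delta > 0$ split $\Omega$ into the interior subdomain $\Omega_\delta = \{z \in \Omega : \dist(z, b\Omega) > \delta\}$ and its complementary boundary strip. The extra weight $\dist(z, b\Omega)$ present in the $\mathcal{A}^{-(k+1)}$ norm but not in the $\mathcal{A}^{-k}$ norm gives the $n,m$-uniform bound
\[
\sup_{z \in \Omega \setminus \Omega_\delta} \dist(z, b\Omega)^{k+1}\, \norm{f_n(z) - f_m(z)}_\Xx \;\leq\; 2\delta.
\]
On $\Omega_\delta$, the sequence is uniformly bounded by $\delta^{-k}$, and the Cauchy integral formula for $\Xx$-valued holomorphic functions shows that it is equicontinuous on compact subsets. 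A Montel-type normal families argument then extracts a subsequence converging uniformly on each $\Omega_\delta$; a diagonal extraction over $\delta = 2^{-j}$ combined with the boundary-strip bound above produces a subsequence that is Cauchy, hence convergent, in $\mathcal{A}^{-(k+1)}(\Omega,\Xx)$.

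The main obstacle is precisely the Montel-type compactness step in the Banach-valued setting: for $\Xx = \cx$ it is the classical Montel theorem applied to the rescaled family $z \mapsto \dist(z, b\Omega)^k f_n(z)$, but for infinite-dimensional $\Xx$ one loses pointwise precompactness and the Cauchy-estimate equicontinuity alone is not enough. One would circumvent this by reducing to a separable closed subspace of $\Xx$ containing the values of all $f_n$, passing to a subsequence that converges weakly on a countable dense set of functionals in $\Xx^*$ and points of $\Omega_\delta$, and then upgrading to norm convergence on compact subsets via the equicontinuity supplied by Cauchy estimates. Once compactness of every $\iota_k$ is established, the DFS conclusion is automatic from the Silva--Floret characterization.
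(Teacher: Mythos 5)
Your overall strategy is the same as the paper's: write $\mathcal{A}^{-\infty}(\Omega,\Xx)$ as the inductive limit of the Banach spaces $\mathcal{A}^{-k}(\Omega,\Xx)$, show that each inclusion $\mathcal{A}^{-k}(\Omega,\Xx)\hookrightarrow\mathcal{A}^{-(k+1)}(\Omega,\Xx)$ is compact by a Montel-type extraction on interior sets combined with the gain of one factor of $\dist(\cdot,b\Omega)$ on a boundary strip, and then invoke the characterization of DFS (Silva) spaces as LB-spaces with compact linking maps. Up to the Montel step, this is exactly the argument in the paper.

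The genuine gap is in your proposed repair of the Montel step for infinite-dimensional $\Xx$. Extracting a subsequence that converges weakly at countably many points and then ``upgrading to norm convergence on compact subsets via equicontinuity'' cannot work: equicontinuity in $z$ together with pointwise weak convergence yields at best locally uniform weak convergence, and the obstruction --- failure of norm-precompactness of bounded sets of $\Xx$ --- is not removed by any regularity in the variable $z$. Concretely, let $\Xx$ be an infinite-dimensional Hilbert space, $(e_n)$ orthonormal, $\delta=\sup_{z\in\Omega}\dist(z,b\Omega)$, and $f_n\equiv\delta^{-k}e_n$ the constant functions. Then $\norm{f_n}_{\mathcal{A}^{-k}}=1$, the family is trivially equicontinuous and weakly null, yet $\norm{f_n-f_m}_{\mathcal{A}^{-(k+1)}}=\sqrt{2}\,\delta>0$ for $n\neq m$, so no subsequence converges in $\mathcal{A}^{-(k+1)}(\Omega,\Xx)$ and the inclusion is not compact. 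In fact, composing with the continuous evaluation $f\mapsto f(z_0)$ at an interior point shows that this bounded sequence has no relatively compact image in $\Xx$, so $\mathcal{A}^{-\infty}(\Omega,\Xx)$ is not even Montel for such $\Xx$; no extraction scheme along your lines can succeed without an additional hypothesis on $\Xx$. You were right to isolate this as the main obstacle: the paper's own proof passes over it by citing a ``Banach-valued version of Montel's theorem,'' which is valid for $\Xx=\cx$ or finite-dimensional $\Xx$ (the setting actually used later in the paper), but not for an arbitrary Banach space.
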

\begin{proof} By a result in functional analysis (\cite[Proposition~25.20]{vogt} or \cite[Appendix A, Sections 5 and 6]{morimoto}), a DFS space may be characterized as the inductive limit $E={\rm ind}_n E_n$ of an increasing sequence of Banach spaces $\{E_n\}_{n\in \mathbb{N}}$
 with the property that for each $n$, there is an $m>n$ such that the embedding $E_n\to E_m$ is compact. Therefore, it suffices to show that the inclusion map $\mathcal{A}^{-k}(\Omega,\Xx)\hookrightarrow \mathcal{A}^{-(k+1)}(\Omega,\Xx)$ is a compact linear map of Banach spaces.	Let $\{f_\nu\}$  be a sequence in the unit ball of $\mathcal{A}^{-k}(\Omega,\Xx)$. By \eqref{eq-slowgrowth}, on each compact $K\subset \Omega$, the family $\{f_\nu\}$ is uniformly bounded and therefore, by a Banach-valued version of Montel's theorem, there exists $f\in \mathcal{O}(\Omega,\Xx)$ such that a subsequence $f_{\nu_j}$ converges to $f$ uniformly on  compact subsets of $\Omega$. Note that $f_{\nu_j}$ need not to converge to $f$ in $\mathcal{A}^{-k}(\Omega,\Xx)$.
 The estimate \eqref{eq-slowgrowth} implies that the limit $f$ lies in the closed unit ball of $\mathcal{A}^{-k}(\Omega,\Xx)$, so that
 $\norm{f-f_{\nu_j}}_{\mathcal{A}^{-k}}\leq 2$ for each $j$. Let $g_j(z)=\norm{f(z)-f_{\nu_j}(z)}_{\Xx}\dist(z,\partial\Omega)^{k+1}$, and let $\epsilon>0$. On the open set $\{z\in \Omega\colon \dist(z,\partial\Omega)< \frac{\epsilon}{2}\}$, we see that $g_j(z) \leq \norm{f-f_{\nu_j}}_{\mathcal{A}^{-k}}\dist(z,\partial\Omega)< \epsilon$. On the complementary compact set $\{z\in\Omega \colon \dist(z,\partial\Omega)\geq \frac{\epsilon}{2}\}$,
 as $j\to \infty$, we have $f_{\nu_j}\to f$ uniformly, so that we can find an $N_\epsilon$ so large that if $j> N_\epsilon$, 
 then $\norm{f(z)-f_{\nu_j}(z)}_{\Xx}<\frac{\epsilon}{\delta^{k+1}}$, where 
 $\delta= \max_{z\in\Omega}\dist(z,\partial\Omega)$. 
 Then for $j> N_\epsilon$ and for each $z\in \Omega$, we have $g_j(z)< \epsilon$. It follows that $f_{\nu_j}\to f$ in $\mathcal{A}^{-(k+1)}(\Omega, \Xx)$ and therefore, 
 the map $\mathcal{A}^{-k}(\Omega,\Xx)\hookrightarrow \mathcal{A}^{-(k+1)}(\Omega, \Xx)$ is 
 compact.
\end{proof}

We will also use the following fact, which is a consequence in the usual way of the closed graph theorem 
for DFS spaces, a proof of which can be found in \cite[Appedix A, Corollary A.6.4]{morimoto}.

\begin{prop}\label{res-dfsinverse} 
If   $E,F$ are DFS spaces, and $u:E\to F$ is a continuous linear map which is a  set-theoretic bijection, 
then $u$ is an isomorphism of topological vector spaces.
\end{prop}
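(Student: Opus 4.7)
The plan is to reduce the statement directly to the closed graph theorem for DFS spaces quoted from \cite{morimoto} just before the proposition. Since $u$ is a set-theoretic bijection, the linear inverse $v = u^{-1} : F \to E$ exists; the whole task is to prove that $v$ is continuous. Continuity of $v$ will follow at once from the closed graph theorem once its graph is shown to be closed.

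The verification that $v$ has closed graph is automatic from continuity of $u$. Indeed, the graph of $v$ is
\[
\Gamma_v = \{(y, x) \in F \times E : u(x) = y\},
\]
which is the image of the graph of $u$,
\[
\Gamma_u = \{(x, u(x)) : x \in E\} \subset E \times F,
\]
under the coordinate-swap homeomorphism $E \times F \to F \times E$. Because $u$ is continuous and $F$ is Hausdorff (DFS spaces are Hausdorff, being duals of Fr\'echet--Schwartz spaces), $\Gamma_u$ is closed in $E \times F$ by the standard argument $(x_\alpha, u(x_\alpha)) \to (x, y) \Rightarrow y = u(x)$. Hence $\Gamma_v$ is closed in $F \times E$.

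Now I would apply the closed graph theorem for DFS spaces from \cite[Appendix~A, Corollary~A.6.4]{morimoto} to the linear map $v : F \to E$: both source and target are DFS by hypothesis, and the graph is closed by the previous paragraph, so $v$ is continuous. Combined with the given continuity of $u$, this shows $u$ is a topological isomorphism. There is no real obstacle in this argument; it is a purely formal consequence of the cited closed graph theorem, and the only point requiring any care is to notice that the closure of the graph of $v$ comes for free from the continuity and Hausdorffness already at hand.
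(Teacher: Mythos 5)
Your proof is correct and follows exactly the route the paper intends: the paper simply invokes the closed graph theorem for DFS spaces (the cited Corollary A.6.4 of Morimoto) "in the usual way," and your argument—applying that theorem to the inverse map $u^{-1}$, whose graph is closed because it is the image of the closed graph of the continuous map $u$ under the coordinate swap, using that DFS spaces are Hausdorff—is precisely the standard filling-in of that step.
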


\subsection{Canonical Extension}
We now show that holomorphic functions of polynomial growth on domains with generic corners admit distributional extensions,
by constructing one such extension. This extension will be called the {\em canonical extension} of the holomorphic function.
Our proof will use a method of Barrett (see\cite{barrettduality}). The notion of canonical extension is due to Andr\'{e} Martineau, who noted its existence for smoothly bounded planar domains in \cite{martineau1}.

By a {\em face} of the domain $\Omega$ in \eqref{eq-omegaint} we mean the subset $\partial\Omega_j\cap \partial\Omega$ of the boundary of $\Omega$ for some~$j$.

\begin{thm}\label{thm-ce} 
Let $\Mm$ be a complex manifold of complex dimension $n$, and let $\Omega\Subset\Mm$ be a domain with 
generic corners. There is a unique continuous linear map 
\[\ce: \mathcal{A}^{-\infty}(\Omega)\to \mathcal{D}'_0(\Mm),\]
such that the following conditions hold.
\begin{enumerate}
\item $\ce f$ is a distributional extension of the function $f$, i.e., $(\ce f)|_\Omega=f$.
 
\item   If $U$ is a coordinate neighborhood of $\Mm$, and $\phi\in\Dd^{n,n}(\Mm)$ has support
in $U$, and there is a vector $v\in \cx^n$ such that in the coordinates on $U$, the vector $v$ points 
outward from $\Omega$ along each $\partial\Omega_j$  meeting the support of $\phi$, then we have
\begin{equation}\label{eq-ce}
 \pair{\ce f}{\phi} = \lim_{\epsilon\downarrow 0}\int_\Omega f_\epsilon \phi ,
\end{equation}
where   $f_\epsilon(z)= f(z-\epsilon v)$.
\item In the special case when $f$ is continuous on $\overline{\Omega}$, the distribution $\ce f$ is induced by the function 
which coincides with $f$ on $\overline\Omega$ and vanishes outside $\overline{\Omega}$.
\end{enumerate}
\end{thm}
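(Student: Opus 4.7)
I would construct $\ce f$ locally from formula~\eqref{eq-ce}, globalize via a partition of unity, and verify the three listed properties together with uniqueness. Uniqueness in fact forces the shape of the construction: any distribution satisfying~(2) is determined on every test form supported in a coordinate chart admitting an outward $v$, and such test forms span $\Dd^{n,n}(\Mm)$ via a partition of unity.

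The central analytic task is to show that for $\phi \in \Dd^{n,n}(\Mm)$ supported in a chart $U$ with an admissible outward vector $v$, the limit $\lim_{\epsilon\downarrow 0}\int_\Omega f_\epsilon \phi$ exists. For small $\epsilon > 0$, $f_\epsilon(z) = f(z-\epsilon v)$ is holomorphic on a one-sided neighborhood of $\overline\Omega \cap \supp \phi$, because $-v$ points into $\Omega$ through each face of $\partial\Omega$ meeting $\supp \phi$, so $\int_\Omega f_\epsilon \phi$ is an absolutely convergent ordinary integral. Following Barrett~\cite{barrettduality}, I would integrate by parts by introducing a Dolbeault primitive: solve $\dbar \Psi = \phi$ for an $(n,n-1)$-form $\Psi$ on the chart (possible since $\cx^n$ is Stein) and use $\dbar f_\epsilon = 0$ together with Stokes on the piecewise-smooth domain $\overline\Omega$ to obtain
\[
\int_\Omega f_\epsilon \phi \;=\; \int_\Omega \dbar(f_\epsilon \Psi) \;=\; \sum_j \int_{\partial\Omega_j \cap \overline\Omega} f_\epsilon \Psi.
\]
If $f$ has polynomial-growth order $k$, I would iterate this step, at each stage solving a Dolbeault-type equation on the current stratum $B_S = \bigcap_{j\in S}\partial\Omega_j$ (a CR manifold of CR-dimension $n-|S|$ by the generic-corner assumption), until after finitely many iterations the integrand is uniformly bounded in $\epsilon$ and dominated convergence supplies the limit. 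A quantitative estimate $|\pair{\ce f}{\phi}| \le C \|f\|_{\mathcal{A}^{-k}}\|\phi\|_{\mathcal{C}^{m}}$ for some $m=m(k)$ falls out of the iteration, giving continuity of $\ce$ on each $\mathcal{A}^{-k}(\Omega)$, hence on the inductive limit $\mathcal{A}^{-\infty}(\Omega)$.

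Independence of the choices of $v$ and $\Psi$ is proved by interpolation: the convex combination $v_t = (1-t)v_1 + t v_2$ of two admissible outward vectors remains outward along the relevant faces, and $\partial_t \int_\Omega f_{\epsilon, v_t}\phi$ vanishes in the limit by another Stokes argument exploiting $\dbar f_\epsilon = 0$; two primitives $\Psi, \Psi'$ differ by a $\dbar$-closed form, whose resulting boundary terms cancel in the limit in the same way. I then cover $\overline \Omega$ by coordinate charts $\{U_i\}$ each admitting an outward $v_i$ (which exist at every corner because the generic-corner hypothesis makes the outward cone non-empty), take a subordinate partition of unity $\{\rho_i\}$, and set $\pair{\ce f}{\phi} = \sum_i \lim_{\epsilon\downarrow 0}\int_\Omega f_{\epsilon, v_i}(\rho_i \phi)$. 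Properties~(1) and~(3) follow by dominated convergence, respectively for $\phi$ supported in $\Omega$ and for continuous $f$ on $\overline\Omega$ where $f_\epsilon \to f$ uniformly on $\overline\Omega \cap \supp\phi$; property~(2) is the definition. The principal obstacle I anticipate is the iterated integration-by-parts step: specifically, carrying out the successive Dolbeault solutions on strata $B_S$ of increasing codimension while keeping consistent track of orientations, sign conventions, and overlapping supports at the corners. The generic-corner hypothesis is exactly what makes each stratum a CR manifold of the correct dimension so that the local Dolbeault-type equations are solvable at every stage.
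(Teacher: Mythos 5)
There is a genuine gap, and it is the central convergence mechanism. After your first Stokes step you are left with face integrals $\int_{\partial\Omega_j\cap\overline\Omega} f_\epsilon \Psi$, and you propose to iterate, pushing to strata $B_S$ of higher codimension until ``the integrand is uniformly bounded in $\epsilon$ and dominated convergence supplies the limit.'' This cannot work as stated: if $f$ has growth order $k$, then for $z$ in the relative interior of a face the point $z-\epsilon v$ lies at distance comparable to $\epsilon$ from $\partial\Omega$, so $|f_\epsilon(z)|$ is of size $\epsilon^{-k}$ \emph{uniformly over the face}, not just near the corners. The blow-up is in the directions transverse to the stratum, and solving tangential (``Dolbeault-type'') equations on the stratum and integrating by parts tangentially does nothing to damp it; passing to lower-dimensional strata only adds transverse directions. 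In effect, the convergence of $\int_{\partial\Omega_j} f_\epsilon\Psi$ as $\epsilon\downarrow 0$ is precisely the existence of the distributional boundary value one is trying to prove, so your scheme is circular at the key point. Two further problems: solvability of the tangential CR equations on the strata $B_S$ is not a consequence of genericity (the tangential Cauchy--Riemann complex is not exact in general; this is the Lewy phenomenon), so the inductive step is unjustified; and the primitive $\Psi$ of $\phi$ is not compactly supported in the region where $f_\epsilon$ is defined (namely near $\supp\phi$, inside $\Omega\cap(\Omega+\epsilon v)$), so the global Stokes identity $\int_\Omega f_\epsilon\phi=\sum_j\int_{\partial\Omega_j\cap\overline\Omega}f_\epsilon\Psi$ is not even well posed without cutoffs, which reintroduce error terms.

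The paper's proof (Barrett's method) avoids boundary integrals of $f_\epsilon$ altogether and instead kills the blow-up by weights that vanish on the boundary. One chooses $(0,1)$ vector fields $T_j$ with $T_jr_\ell=\delta_{j\ell}$ near the corners, where $r_j$ are defining functions of the $\Omega_j$; since $T_jf_\epsilon=0$, one has identities like $f_\epsilon=\frac{1}{s!}T_j^s(r_j^sf_\epsilon)$, and repeated integration by parts (boundary terms vanish because of the factors $r_j^s$) converts $\int_\Omega f_\epsilon\phi$ into integrals over the solid domain $\Omega$ of $\bigl(\prod_{j\in S} r_j^{s_j}f_\epsilon/s_j!\bigr)\bigl(\prod_{j\in S}(T_j^*)^{s_j}\bigr)(\chi_S\phi_0)$, as in \eqref{eq-cedef}. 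Choosing the $s_j$ so that $r_1^{s_1}\cdots r_N^{s_N}f$ is continuous on $\overline\Omega$, one gets $r^{s}f_\epsilon\to r^{s}f$ \emph{uniformly}, so the limit exists, is manifestly independent of $v$, and the explicit formula yields continuity, uniqueness, and property (3) at once. Your outline for uniqueness, the partition-of-unity globalization, and property (1) is fine, but the analytic core needs to be replaced by an argument of this weighted type (or an equivalent one); as written, the iteration does not close.
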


 \begin{proof}
We may cover the boundary $b\Omega$ by a finite collection $\{U_j\}_{j=1}^M$ of coordinate charts,
such that in the local coordinates of each chart $U_j$, there is a vector $v_j$ of the type referred to above.
Thus, without loss of generality we may assume that $\Mm=\cx^n$, and the existence of $\ce$ would 
follow provided we can show the existence of the limit \eqref{eq-ce}, and its independence of the choice 
of the vector $v\in\cx^n$. In the heart of our proof is the integration by parts argument due to Barrett 
(see \cite{barrettduality}). 

To illustrate the idea, consider first the simplest case when $\Omega$ has smooth boundary,  
and let 
with $r$ be  a defining function of $\Omega$ (see \cite{straube84}).  Let $T$ be a vector field of type
$(0,1)$ such that $Tr\equiv 1$ near $\partial\Omega$.  Let $U$
be a neighborhood of a point in $\partial\Omega$, and let $v$ a vector such that $U$ and $v$ satisfy 
the conditions in (2) of the theorem. By shrinking $U$, we may assume that $Tr\equiv 1$ on $U$.
For a $(n,n)$-form $\phi$ supported
in $U$ we write $\phi = \phi_0 dV$, where $dV$ is the standard volume form of $\cx^n$. Let $\epsilon>0$, and let $f_\epsilon$ be as in \eqref{eq-ce}.  Then for
any positive integer $s$, by applying the integration by parts formula $s$ times we obtain that
\[
\int_\Omega f_\epsilon \phi = \frac{1}{s!}\int_\Omega (r^s f_\epsilon )\, ((T^*)^s\phi_0) dV,
\]
where $T^*= -(T+{\rm div}\, T)$ is the formal transpose of the vector field $T$, and we use  the fact that $T(r^kf_\epsilon)= k r^{k-1}f_\epsilon$ on $U$ for each integer $k\geq 0$. Note that the boundary terms 
vanish at each step since $f_\epsilon$ is smooth up to $\partial\Omega$.
Using that $f$ is of polynomial growth, we choose $s$ such that the function $r^s f$ is continuous on 
$\overline \Omega$. Then as $\epsilon\to 0$, we have $r^s f_\epsilon\to r^s f$ {\em uniformly} on $\overline{\Omega}$, so that we obtain by letting $\epsilon\to 0$ that 
\begin{equation}\label{eq-cesmooth}
\lim_{\epsilon\downarrow 0}\int_\Omega f_\epsilon \phi = \frac{1}{s!}\int_\Omega (r^s f )\, ((T^*)^s\phi_0) dV.
\end{equation}
Therefore, the limit on the left hand side exists and is given by the expression on the right hand side, which does not involve 
$v$. Hence, the limit is independent of $v$, and also of $s$ (as long as $r^s f$ is continuous on $\overline{\Omega}$), since the left hand side is independent  of $s$. We conclude that $\pair{\ce f}{\phi}$ exists for $\phi$ supported in $U$ and is given by the expression on the right hand side of \eqref{eq-cesmooth}, and the global existence of $\ce f$ follows from a partition of unity argument. 

In fact, for a general domain with generic corners  in $\mathbb C^n$ it is possible to give a formula analogous to the right-hand side of \eqref{eq-cesmooth} for computing $\ce f$ directly. To state this formula (equation (2.1a) of \cite{barrettduality}), we use the following notation. Let $r_k$ denote a defining function of the domain $\Omega_k$  in the representation \eqref{eq-omegaint}. Note that the condition that the corners of a domain with generic corners are generic CR manifolds is  equivalent to the 
following: at each point  in the intersection $\bigcap_{j\in S}\partial\Omega_j$, we have  $\bigwedge_{j\in S}\dbar r_j\not=0$. Therefore, in a neighborhood of each point $p\in \mathbb C^n$,  we can find $N$ vector fields $T_1^{(p)},\dots, T_N^{(p)}$ of type $(0,1)$ such that $T_j^{(p)}r_k=\delta_{jk}$ in a neighborhood of $p$ whenever $r_j(p)=r_k(p)=0$. By a partition of unity argument, we obtain vector 
fields $T_j, j=1,\dots, N$, on $\cx^n$ of type $(0,1)$ such that $T_jr_k=\delta_{jk}$ on a neighborhood  $U_{jk}$ of $\partial\Omega_j\cap\partial\Omega_k$. Let $T_j^*$ denote the first order differential 
operator on $\Mm$ which is the formal transpose of the vector field  $T_j$ with respect to the standard 
bilinear pairing $(u,v)\mapsto \int_\Mm uv dV$, i.e., for smooth compactly supported $u,v$, we have 
$\int_\Mm(T_j u)v dV= \int_\Mm u(T_j^*v) dV$.

For a subset $S\subset\{1,2,\dots, n\}$, let  $U_S = \bigcap_{j,k\in S} U_{jk}\setminus \bigcup_{\ell \not\in S} b\Omega_\ell.$ In particular, $U_{\emptyset} = \cx^n \setminus \bigcup_{\ell=1}^N b\Omega_\ell$. Then the family $\{U_S\}$, as $S$ runs over all possible subsets 
of $\{1, 2, \ldots, N\}$ including the empty set, is an open cover of $\cx^n$. 
Let $\{\chi_S\}$ be a partition of unity subordinate to this cover.  We now define, for an $f\in \mathcal{A}^{-\infty}(\Omega)$ and a $\phi=\phi_0 dV$, the distribution $\ce f$ by the prescription
\begin{equation}\label{eq-cedef}
  \pair{\ce f}{\phi} = \sum_{S\subset \{1,\dots, N\}}\int_\Omega \left( \prod_{j\in S} \frac{r_j^{s_j}}{s_j!}f\right) \left(\prod_{j\in S}(T_j^*)^{s_j}
\right)\left(\chi_S \phi_0\right) dV,
\end{equation}
where $(s_1,\dots,s_N)\in \mathbb{N}^N$ is such that $r_1^{s_1}\dots r_N^{s_N}f$ is continuous on 
$\overline{\Omega}$.

Repeated integrations by parts,
using the relations $T_jr_k=\delta_{jk}$ and $T_j f =0$ (since $f$ is holomorphic) shows that when $f$ 
bounded, we have $\pair{\ce f}{\phi}= \int_\Omega f \phi$, i.e., $\ce f = f [\Omega]$,
which shows, in particular, that at least in this case, $\ce f$ is defined independently of the choice of the tuple
 $(s_1,\dots, s_N)$ (see  \cite{barrettduality,chak-kaushal2} for details.)  The general argument is similar to that in the smooth case above. Assuming $\phi$ and $f_\epsilon$ are as in \eqref{eq-ce}, the same integration by parts argument shows that limit in \eqref{eq-ce} is given by \eqref{eq-cedef} ,  which shows that $\ce f$ is defined by 
\eqref{eq-cedef} independently of $(s_1,\dots,s_N)$, and that limit on the right-hand side of \eqref{eq-ce} is independent of 
the particular vector $v$ used to define  $f_\epsilon$.

To show the uniqueness, assume that there exists another distribution, say, $h\in \mathcal{D}'_0(\Mm)$ that
satisfies conditions (1) and (2) of the theorem for a given function $f$. Then it follows from \eqref{eq-ce} 
that in a coordinate neighborhood $U$ of any point $\partial\Omega$ on which $v$ exists, we have 
$h|_U = {\ce f}|_U$. This implies the uniqueness. 

The continuity of $\ce:\mathcal{A}^{-\infty}(\Omega)\to \Dd'_0(\Mm)$ follows from the 
expression~\eqref{eq-cedef}. 

Finally, the property (3) also follows from the representation \eqref{eq-cedef}.  Then we have $s_1=s_2=\dots=s_N=0$,
and $\pair{\ce f}{\phi}= \int_\Omega f\phi_0dV$.
\end{proof}

\subsection{Proof of Theorem~\ref{thm-bcexistence}} We define an operator $\bc:\mathcal{A}^{-\infty}(\Omega)\to \Dd'_{0,1}(\Mm)$ by setting
\begin{equation}\label{eq-bcdef}
\bc = -\dbar \circ \ce.
\end{equation}
Since $\ce$ and $\dbar$ are continuous, so is $\bc$. Let $f\in \mathcal{A}^{-\infty}(\Omega)$. Then we have, with 
$\psi$ and $v$ as in the statement of the theorem,
\begin{align*}
\pair{\bc f}{\psi}&=\pair{\ce f}{\dbar \psi}
= \lim_{\epsilon\downarrow 0} \int_\Omega f_\epsilon \dbar \psi
=  \lim_{\epsilon\downarrow 0} \int_\Omega \dbar(f_\epsilon  \psi)
=  \lim_{\epsilon\downarrow 0} \int_\Omega d( f_\epsilon  \psi)
= \lim_{\epsilon\downarrow 0} \int_{\partial\Omega }f_\epsilon  \psi,
\end{align*}
which completes the proof of Theorem~\ref{thm-bcexistence}.

\subsection{Some properties of the boundary current}
The necessity of the Weinstock condition \eqref{eq-weinstock} follows essentially from 
Theorem~\ref{thm-bcexistence}.

 \begin{prop}\label{prop-weinstock} 
 If $\Omega\Subset\Mm$ is a domain with generic corners, and $f\in \mathcal{A}^{-\infty}(\Omega)$. 
 Then, $\bc f$ satisfies the Weinstock orthogonality condition with respect to the domain $\Omega$.
 \end{prop}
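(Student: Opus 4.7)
My plan is to leverage the definition $\bc f=-\dbar\,\ce f$ from~\eqref{eq-bcdef}, together with the local integral representation of the canonical extension furnished by Theorem~\ref{thm-ce}(2). Arguing exactly as in the proof of Theorem~\ref{thm-bcexistence}, we obtain for every test form $\omega\in\Dd^{n,n-1}(\Mm)$ the identity
\[
\pair{\bc f}{\omega}=\pair{\ce f}{\dbar\omega},
\]
so the Weinstock condition reduces to showing that $\pair{\ce f}{\dbar\omega}=0$ whenever $\dbar\omega\equiv 0$ on $\overline{\Omega}$.

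To establish this vanishing, I would localize via a partition of unity. Cover $\partial\Omega$ by finitely many coordinate charts $U_1,\dots,U_M$, where each $U_j$ admits an outward-pointing vector $v_j$ satisfying the hypotheses of Theorem~\ref{thm-bcexistence}; supplement these with an open set $U_0\subset\Mm$ whose closure is disjoint from $\partial\Omega$, so that $\{U_j\}_{j=0}^M$ covers $\Mm$. Choose a subordinate smooth partition of unity $\{\chi_j\}_{j=0}^M$ and decompose
\[
\pair{\ce f}{\dbar\omega}=\sum_{j=0}^M \pair{\ce f}{\chi_j\,\dbar\omega}.
\]

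For each $j\geq 1$, the piece $\chi_j\,\dbar\omega$ is a test $(n,n)$-form supported in $U_j$, so Theorem~\ref{thm-ce}(2) yields
\[
\pair{\ce f}{\chi_j\,\dbar\omega}=\lim_{\epsilon\downarrow 0}\int_\Omega f_\epsilon\,\chi_j\,\dbar\omega,
\]
with $f_\epsilon(z)=f(z-\epsilon v_j)$. Because $\dbar\omega$ vanishes on $\overline{\Omega}$---and in particular on the open set $\Omega$ of integration---the integrand is pointwise zero, forcing each integral and hence its limit to equal $0$. For $j=0$, the support of $\chi_0\,\dbar\omega$ is disjoint from $\partial\Omega$, and on this set $\ce f$ coincides with the locally bounded function equal to $f$ on $\Omega$ and to $0$ on $\Mm\setminus\overline{\Omega}$ (using that $\ce f$ extends $f$ and has support in $\overline{\Omega}$, which is visible directly from the formula~\eqref{eq-cedef}). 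Consequently $\pair{\ce f}{\chi_0\,\dbar\omega}=\int_\Omega f\,\chi_0\,\dbar\omega=0$, again by the vanishing of $\dbar\omega$ on $\Omega$. Summing over $j$ completes the argument.

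I do not anticipate any genuine obstacle. The proposition is essentially an algebraic consequence of~\eqref{eq-bcdef} combined with Theorem~\ref{thm-ce}(2); the role of the partition of unity is merely to guarantee that each localized piece of the test form falls within the scope of that local formula. The only subsidiary fact needed, namely that $\ce f$ is supported in $\overline{\Omega}$, is transparent from~\eqref{eq-cedef}, and was already used implicitly in the arguments of the preceding subsections.
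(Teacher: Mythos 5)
Your proof is correct and follows essentially the same route as the paper: both reduce, via $\bc f=-\dbar\,\ce f$, to showing $\pair{\ce f}{\dbar\omega}=0$, which holds because $\dbar\omega$ vanishes on $\overline{\Omega}$ and $\ce f$ annihilates such test forms by the limit formula \eqref{eq-ce}. Your partition-of-unity localization merely spells out a step the paper leaves implicit, so no further changes are needed.
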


\begin{proof}[Proof of Proposition~\ref{prop-weinstock}] From 
\eqref{eq-ce} it follows that if $\phi\in \Dd^{n,n}(\Mm)$ is such that $\phi\equiv 0$ on $\overline{\Omega}$, 
then we have $\pair{\ce f}{\phi}=0$, since for each $\epsilon>0$, we have $\int_\Omega f_\epsilon \phi=0$. Now let $\omega\in \Dd^{n,n-1}(\Mm)$ be such that $\dbar\omega=0$ on $\overline\Omega$. Then,
\begin{align*}
\pair{\bc f}{\omega}&= \pair{-\dbar(\ce f)}{\omega}
=\pair{\ce f}{\dbar \omega}
=0.
\end{align*}
\end{proof}
We also have the following representation of the boundary current along the smooth part of $\partial\Omega$, 
which shows  that on the smooth part, the boundary current is the distributional boundary value written in an 
invariant way: 
\begin{prop}\label{prop-localface} 
Let $U$ be an open subset of $\Mm$ such that $\partial\Omega\cap U$ is a smooth hypersurface in $U$. Then 
there is a  distribution $\alpha\in \Dd'_0(\partial\Omega\cap U)$ such that $\bc f|_U = \iota_*(\alpha)^{0,1}$, 
where $\iota:\partial\Omega\cap U\to U$ is the inclusion map.
\end{prop}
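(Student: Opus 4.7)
The plan is to extract the face distribution $\alpha$ directly from the local formula for $\bc f$ provided by Theorem~\ref{thm-bcexistence}. First I would observe that $\bc f|_U$ is supported on $\partial\Omega \cap U$: indeed, $\ce f$ restricts to the holomorphic function $f$ on $\Omega \cap U$, and vanishes on $\Mm \setminus \overline{\Omega}$ by the explicit formula~\eqref{eq-cedef}, so $\bc f|_U = -\dbar(\ce f)|_U$ can be supported only on $\partial\Omega \cap U$.

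Next I would show that $\pair{\bc f|_U}{\psi}$ depends only on $\iota^{*}\psi$ for $\psi \in \Dd^{n, n-1}(U)$; equivalently, that $\pair{\bc f|_U}{\psi} = 0$ whenever $\iota^{*}\psi = 0$. To this end, cover the support of $\psi$ by coordinate charts $V_k \subset U$ ($k \ge 1$) each admitting an outward vector $v_k$---these exist because $\partial\Omega \cap U$ is smooth---together with an interior chart $V_0 \Subset U \setminus \partial\Omega$, and let $\{\chi_k\}$ be a subordinate partition of unity. The contribution from $\chi_0\psi$ vanishes by the support statement above, while for each $k \ge 1$ Theorem~\ref{thm-bcexistence} yields
\[
\pair{\bc f|_U}{\chi_k\psi} = \lim_{\epsilon \downarrow 0}\int_{\partial\Omega} f_\epsilon\,\chi_k\psi,
\]
and every integrand on the right restricts to zero on $\partial\Omega$ once $\iota^{*}\psi = 0$.

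With this in hand I would define $\alpha \in \Dd'_0(\partial\Omega \cap U)$ by $\pair{\alpha}{\eta} := \pair{\bc f|_U}{\psi}$ for any $\psi \in \Dd^{n, n-1}(U)$ with $\iota^{*}\psi = \eta$. Such a $\psi$ always exists: at every $p \in \partial\Omega \cap U$, a quick local-coordinate check (choosing real coordinates in which $dr(p) = \operatorname{Re} dz_1$) shows that at least one basis $(n, n-1)$-form has non-vanishing pullback to $\Lambda^{2n-1}T^{*}_p(\partial\Omega \cap U)$, so a continuous right-inverse of $\iota^{*}$ can be assembled via a tubular neighborhood of $\partial\Omega \cap U$ and a partition of unity. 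Well-definedness of $\alpha$ follows from the previous step, and continuity from that of $\bc f|_U$. The identity
\[
\pair{\iota_{*}\alpha}{\psi} = \pair{\alpha}{\iota^{*}\psi} = \pair{\bc f|_U}{\psi} \qquad (\psi \in \Dd^{n, n-1}(U)),
\]
together with the fact that $\bc f = -\dbar\,\ce f$ is pure of bidegree $(0, 1)$, then gives $\iota_{*}(\alpha)^{0, 1} = \bc f|_U$.

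The main technical point is constructing the continuous right-inverse of $\iota^{*}$ with controlled $\mathcal{C}^k$-bounds, which is needed to conclude that $\alpha$ is of finite order and varies continuously with $f$; once set up in a tubular-neighborhood parametrization, however, this is a straightforward partition-of-unity computation.
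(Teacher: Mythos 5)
Your proposal is correct, but it follows a genuinely different route from the paper's. The paper's own proof works directly from the limit formula \eqref{eq-bc}: after reducing to the case where $U$ meets a single face $\partial\Omega_j$ and choosing one outward vector $v$, it rewrites $\int_{\partial\Omega}f_\epsilon\phi=\pair{\iota_*\left(f_\epsilon|_{\partial\Omega_j}\right)^{0,1}}{\phi}$ and notes that, since the left-hand side converges for every test form, the restrictions $f_\epsilon|_{\partial\Omega_j}$ converge as distributions; the limit is taken to be $\alpha$. You instead establish two facts — that $\bc f|_U$ is supported on $\partial\Omega\cap U$, and that $\pair{\bc f|_U}{\psi}=0$ whenever $\iota^*\psi=0$ — and then build $\alpha$ by composing $\bc f|_U$ with a continuous right inverse of $\iota^*$. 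This is in effect a local, bidegree-refined version of the paper's Lemma~\ref{lem-facestruct}, which the paper proves later and uses only for the product case (Proposition~\ref{prop-bcfacewise}); your pointwise check that some $(n,n-1)$-form has non-vanishing pullback to the hypersurface is exactly the extra ingredient needed to run that argument with $(n,n-1)$-forms rather than all $(2n-1)$-forms, and it is correct. What your route buys: it makes explicit the extension-operator step that the paper's terse ``therefore $\lim_{\epsilon\downarrow 0}f_\epsilon|_{\partial\Omega_j}$ exists'' quietly relies on, and your partition-of-unity localization handles the case where no single vector $v$ serves all of $U$, which the paper dispatches with a ``without loss of generality.'' What it gives up: the paper's argument additionally identifies $\alpha$ as the weak limit of the translated boundary restrictions $f_\epsilon|_{\partial\Omega_j}$, i.e., as the classical distributional boundary value of $f$, and this concrete description is the form in which the face distribution is exploited elsewhere (e.g., in the proofs of Propositions~\ref{prop-bcinjective} and~\ref{prop-y01target}); your abstract construction proves the stated proposition but does not by itself deliver that identification.
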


\begin{proof}
Without loss of generality, $U$ meets only one face $\partial\Omega_j$ of $\Omega$. If $\phi\in \Dd^{n,n-1}(\Mm)$ 
has support in $U$, we have, choosing $v$ to be outward from $\Omega$ along $\partial\Omega_j$, and setting 
$f_\epsilon= f(\cdot- \epsilon v)$:
\begin{align*}
\int_{\partial\Omega}f_\epsilon \phi&= \int_{\partial\Omega_j}f_\epsilon \phi
= \pair{\iota_*\left(f_\epsilon|_{\partial\Omega_j}\right)^{0,1}}{\phi},
\end{align*}
where we identify the function $f_\epsilon|_{\partial\Omega_j}$ with the distribution generated by it. Letting $\epsilon\to 0$, we note that for each $\phi$, the left-hand side has a limit, therefore, $\lim_{\epsilon\downarrow 0}f_\epsilon|_{\partial\Omega_j}$ also exists and can be taken to be $\alpha$.
\end{proof}
As a consequence we have the following:
\begin{prop}\label{prop-bcinjective}
For a domain $\Omega$ with generic corners, the map 
$\bc:\mathcal{A}^{-\infty}(\Omega)\to \mathcal{X}^{0,1}_\Omega(\Mm)$ is injective.
\end{prop}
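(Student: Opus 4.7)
The plan is to exploit the factorization $\bc = -\dbar\circ\ce$ from~\eqref{eq-bcdef} together with the hypoellipticity of the Cauchy--Riemann operator. Suppose $f\in\mathcal{A}^{-\infty}(\Omega)$ satisfies $\bc f=0$; the goal is to deduce $f\equiv 0$.

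First I would verify that $\supp(\ce f)\subset\overline{\Omega}$. Let $\phi\in\Dd^{n,n}(\Mm)$ have support in a coordinate chart $U$ disjoint from $\overline{\Omega}$. The outward-pointing requirement on $v$ in condition~(2) of Theorem~\ref{thm-ce} is vacuous in this situation, since $\supp(\phi)$ meets none of the faces $\partial\Omega_j$, so any $v\in\cx^n$ is admissible in~\eqref{eq-ce}. But then $\int_\Omega f_\epsilon\phi=0$ for every small $\epsilon>0$ because $\supp(\phi)\cap\Omega=\emptyset$, and passing to the limit gives $\pair{\ce f}{\phi}=0$. A partition of unity extends this to arbitrary $\phi\in\Dd^{n,n}(\Mm)$ supported in $\Mm\setminus\overline{\Omega}$.

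Next I would invoke the identity theorem. The hypothesis $\bc f=0$ combined with~\eqref{eq-bcdef} yields $\dbar(\ce f)=0$ as a $(0,1)$-current on $\Mm$. Working in a local holomorphic chart, applying $\partial$ shows that $\ce f$ satisfies $\Delta(\ce f)=0$ as a distribution, so Weyl's lemma gives that $\ce f$ is represented by a smooth function $F$, and the equation $\dbar F=0$ then places $F$ in $\mathcal{O}(\Mm)$. By the preceding step, $F$ vanishes on the open set $\Mm\setminus\overline{\Omega}$, which meets the connected component $\Mm_0$ of $\Mm$ containing $\Omega$ (this is automatic because $\Omega\Subset\Mm$ has nonempty boundary and $\partial\Omega$ is nowhere dense, being a finite union of smooth real hypersurfaces). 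The identity theorem forces $F\equiv 0$ on $\Mm_0$, hence $f=F|_\Omega\equiv 0$, as desired.

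The argument is essentially bookkeeping, since the substantive work is already encoded in Theorem~\ref{thm-ce} and the formula $\bc=-\dbar\circ\ce$; the only point that requires a moment of care is the support statement $\supp(\ce f)\subset\overline{\Omega}$, which boils down to the fact that condition~(2) of Theorem~\ref{thm-ce} places no constraint on $v$ when the test form is supported away from the boundary.
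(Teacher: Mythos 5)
Your argument is correct, but it takes a genuinely different route from the paper's. The paper argues locally: if $\bc f=0$, then by Proposition~\ref{prop-localface} the distributional boundary value of $f$ vanishes near any smooth point of $\partial\Omega$, and the jump representation for distributional boundary values (cited from Kytmanov) shows that $f$ is then smooth up to the boundary near such a point and continuously attains the value $0$ on an open piece of $\partial\Omega$, whence $f\equiv 0$ by boundary uniqueness. You instead argue globally through the factorization $\bc=-\dbar\circ\ce$: the canonical extension is supported in $\overline{\Omega}$ (your ``vacuous $v$'' observation is precisely the remark already made at the start of the proof of Proposition~\ref{prop-weinstock}), it is $\dbar$-closed when $\bc f=0$, hence holomorphic by elliptic regularity of $\dbar$, and it vanishes on the nonempty open set $\Mm_0\setminus\overline{\Omega}$ (where $\Mm_0$ is the component containing $\Omega$), so the identity theorem forces $\ce f\equiv 0$ and thus $f=(\ce f)|_\Omega=0$. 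Your route needs only Theorem~\ref{thm-ce}, the definition \eqref{eq-bcdef}, and standard $\dbar$-regularity, avoiding the jump-formula citation, and it is very much in the spirit of the support/analytic-continuation argument the paper itself uses for surjectivity in Theorem~\ref{thm-smoothcase}; the paper's proof, in exchange, is purely local and yields the extra information that vanishing of $\bc f$ near a smooth boundary point forces smoothness of $f$ up to the boundary there.

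One small point to tighten: your parenthetical justification that $\Mm_0\setminus\overline{\Omega}$ is nonempty is not quite right as stated. Nowhere-density of $\partial\Omega$ alone does not preclude $\overline{\Omega}=\Mm_0$ (consider a compact $\Mm_0$ minus a nonseparating smooth real hypersurface, which is open, connected, and dense). What actually saves you is that each $\Omega_j$ is a smoothly bounded domain and hence lies locally on one side of $\partial\Omega_j$: any $p\in\partial\Omega$ belongs to some $\partial\Omega_j$, and points on the outer side of $\partial\Omega_j$ near $p$ lie in $\Mm_0\setminus\overline{\Omega_j}\subset\Mm_0\setminus\overline{\Omega}$. (Both your proof and the paper's implicitly assume $\partial\Omega\neq\emptyset$, which rules out the degenerate case of a compact component exhausted by $\Omega$.) With that correction your proof is complete.
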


\begin{proof}Let $f\in\mathcal{A}^{-\infty}(\Omega) $ be in the kernel of the linear mapping $\bc$, i.e., 
$\bc f=0$. Since the function $f$ has distributional boundary value 0 in a neighborhood of any smooth point of 
$\partial\Omega$, we see, using the jump representation for distributional boundary values 
\cite[Theorem~6.1, part 5]{kytbook}, that $f$ is,  in fact, smooth up to the boundary near this point. Therefore, 
$f$ continuously assumes the boundary value 0 in an open set on the boundary, which shows that $f=0$.
\end{proof}

\section{Smoothly bounded domains}
\subsection{Proof of Theorem~\ref{thm-smoothcase}}
First, note that if $f\in \mathcal{A}^{-\infty}(\Omega)$, then $\bc f\in \mathcal{X}_{\Omega}^{0,1}(\Mm)$.
Indeed, by Proposition~\ref{prop-weinstock}, the current $\bc f$ satisfies the Weinstock condition,
and the existence of a face distribution $\alpha$ such that $\bc f = \iota_*(\alpha)^{0,1}$ follows from Proposition~\ref{prop-localface}.

The injectivity of the map $\bc: \mathcal{A}^{-\infty}(\Omega)\to \mathcal{X}_{\Omega}^{0,1}(\Mm)$ follows from Proposition~\ref{prop-bcinjective}. Now we show that $\bc:\mathcal{A}^{-\infty}(\Omega)\to \mathcal{X}^{0,1}_\Omega(\Mm)$ is surjective.
Without loss of generality, assume $\Mm$ is connected. Let $\gamma\in \mathcal{X}^{0,1}(\Mm)$
be arbitrary. By definition, for each $\omega\in \Dd^{n,n-1}(\Mm)$ such that $\dbar\omega=0$ on $\overline{\Omega}$, we have
$\pair{\gamma}{\omega}=0$, and  there is  $\alpha\in \Dd'_0(\partial \Omega)$, such that 
$ \gamma= \iota_*(\alpha)^{0,1},$
where $\iota:\partial \Omega\hookrightarrow \Mm$ is the inclusion map. 
We need to show that there is an $f\in \mathcal{A}^{-\infty}(D)$ such that 
$\gamma=\bc f.$

We claim that without loss of generality, we may assume that $\Mm$ is non-compact, and that no component of $\Mm\setminus \Omega$ is compact. To see this,
let $\Nn$ be a noncompact open  submanifold of $\Mm$ defined in the following way. Let $\{D_i\}_{i\in I}$ be the collection 
of relatively compact connected components of $\Mm\setminus \overline{\Omega}$. (The collection  $\{D_i\}_{i\in I}$ may of course be empty.)  For each $i\in I$, fix a point $z_i\in D_i$,
and let $\Nn=\Mm \setminus \{z_i\}_{i\in I}$. Then $\Nn$ is a connected noncompact complex manifold, 
$\Omega\Subset \Nn$,  and the complement $\Nn\setminus\Omega$ does not have any compact components.
Clearly, $\gamma|_{\Nn}\in \mathcal{X}^{0,1}(\Nn)$. The claim follows if we replace $\Mm$ by $\Nn$, and 
$\gamma$ by $\gamma|_{\Nn}$.

By a classical result of Malgrange (see \cite[page 236, comments following Probl\`{e}me 1]{mal57}), the Dolbeault cohomology 
group $H^{n,n}(\Mm)$ vanishes, since $\Mm$ is noncompact and connected.  Now, the transpose of the surjective linear continuous map of Fr\'{e}chet spaces
$\dbar: \Ee^{n,n-1}(\Mm)\to \Ee^{n,n}(\Mm)$  (where $\Ee^{p,q}(\Mm)$ is the space of smooth $(p,q)$-forms on $\Mm$) can be identified with  the map $\dbar: \Ee'_{0,0}(\Mm)\to \Ee'_{0,1}(\Mm)$, where 
$\Ee'_{p,q}(\Mm)$ is the space of compactly supported $(p,q)$-currents on $\Mm$ (see \cite[Proposition 5]{serre}).  It now follows from a well-known result of functional analysis (see \cite[Theorem~37.2]{trevesbook}) that the range of $\dbar: \Ee'_{0,0}(\Mm)\to \Ee'_{0,1}(\Mm)$ is closed in $\Ee'_{0,1}(\Mm)$ with respect to its weak topology,  and therefore with respect to the usual strong topology.  Therefore, the range of $\dbar: \Ee'_{0,0}(\Mm)\to \Ee'_{0,1}(\Mm)$ can be identified with the subspace of $\Ee'_{0,1}(\Mm)$ orthogonal to the kernel of $\dbar: \Ee^{n,n-1}(\Mm)\to \Ee^{n,n}(\Mm)$, i.e., the range of $\dbar: \Ee'_{0,0}(\Mm)\to \Ee'_{0,1}(\Mm)$ consists precisely of those 
$\theta\in  \Ee'_{0,1}(\Mm)$ which have the property that for each $\omega\in \Ee^{n,n-1}(\Mm)$ such that $\dbar\omega=0$ we have $\pair{\theta}{\omega}=0$. It now follows from the Weinstock orthogonality condition \eqref{eq-weinstock} that $\gamma$ lies in the range of $\dbar: \Ee'_{0,0}(\Mm)\to \Ee'_{0,1}(\Mm)$, so that there is a
compactly distribution $u$ on $\Mm$ such that $\dbar u =-\gamma$.

From the  structure of $\gamma$ given by \eqref{eq-facedistn}, it follows that the support of $\gamma$ is contained in the boundary $\partial \Omega$. Therefore, $u$ is holomorphic on $\Mm\setminus\partial\Omega$, in particular,
it is holomorphic on $\Mm\setminus \overline{\Omega}$. However, since $u$ has compact support, and no component of $\Mm\setminus\overline{\Omega}$ is relatively compact by assumption, it follows that $u$ vanishes on an open subset of each component of  $\Mm\setminus\overline{\Omega}$.  Therefore, by analytic continuation, $u$ vanishes on each component of $\Mm\setminus \overline{\Omega}$, and so the support 
of $u$ is contained in $\overline{\Omega}$.

We set $f=u|_{\Omega}$. Since $u$ is holomorphic on $\Mm\setminus \partial\Omega$, it follows that $f\in \mathcal{O}(\Omega)$. Since the holomorphic function $f$ on $\Omega$ can be extended to the distribution $u$ on 
$\Mm$,  it follows by Proposition~\ref{prop-extension} (with $\Xx=\cx$) that $f\in \mathcal{A}^{-\infty}(\Omega)$.
The surjectivity of $\bc$ will be established if we show that $\gamma=\bc f$.
This is clearly a local question, 
so we pick a point $p\in \partial\Omega$, and a system of holomorphic coordinates around $p$. Let $B$ be a ball in these coordinates centered at $p$, so small that $\partial \Omega$ divides $B$ into two pieces $B^-=B\cap \Omega$ and $B^+=B\setminus \overline{\Omega}$. Now since $H^{0,1}(B)=0$, we can solve the $\dbar$-problem $\dbar h = -\gamma|_B$ on $B$, 
and the solution $h$ may be represented by a Bochner-Martinelli type integral (see \cite[Chapter 6]{kytbook}).
Then $h$  has the property that for each $\phi\in \mathcal{D}^{n,n-1}(B)$, we have (see \cite[Theorem~6.1, part 5]{kytbook}):
\[ -\gamma(\phi)= \lim_{\epsilon\downarrow 0}\int_{B\cap\partial \Omega}\left( h(\zeta+\epsilon\nu(\zeta))-h(\zeta-\epsilon\nu(\zeta))\right)\phi(\zeta),\]
where $\nu$ is the outward unit normal vector field on $\partial\Omega\cap B$.
Further, since $h-u$ is holomorphic on $B$, after subtracting a holomorphic function on $B$ from $h$, 
we can assume that $h=u$.  Therefore,  $h(\zeta+\epsilon\nu(\zeta))=0$ and $h(\zeta-\epsilon\nu(\zeta))=f(\zeta-\epsilon\nu(\zeta))$, and we have for each $\phi\in \mathcal{D}^{n,n-1}(B)$,
\[ \gamma(\phi)= \lim_{\epsilon\downarrow 0}\int_{\partial \Omega}f(\zeta-\epsilon\nu(\zeta))\phi(\zeta),\]
so that we have  $ \gamma= \bc f$,
which shows that $\bc$ is surjective.

Therefore $\bc: \mathcal{A}^{-\infty}(\Omega)\to \mathcal{X}^{0,1}_\Omega(\Mm)$ is a continuous bijection  of 
topological vector spaces. We know from Proposition~\ref{prop-ainftydfs} that $\mathcal{A}^{-\infty}(\Omega)$ is a DFS space, and $\mathcal{X}^{0,1}_\Omega(\Mm)$ is also a DFS space, since it is a closed subspace of the DFS space $\Dd'_0(\partial\Omega)$. The fact that
$\bc$ is an isomorphism of topological vector spaces now follows from Prop~\ref{res-dfsinverse}.

\begin{proof}[Proof of Corollary~\ref{cor-bochnerhartogs}] 
In view of Theorem~\ref{thm-smoothcase} above it suffices to show that $\gamma\in \mathcal{X}^{0,1}_\Omega(\cx^n)$. For this we will show that  $\gamma$ satisfies the Weinstock criterion 
with respect to $\Omega$ if and only if $\partial\overline\gamma =0$.  

If $\gamma$ satisfies\eqref{eq-weinstock}, then take $\omega=\dbar\phi$ for $\phi\in \Dd^{n,n-2}(\Mm)$. Condition \eqref{eq-weinstock} now implies that $\dbar\gamma=0$. Suppose now that $\partial\overline\gamma =0$. Let $\phi\in \Dd^{n,n-1}(\cx^n)$ be such that $\dbar\phi=0$ on $\Omega$. By
 an approximation result for $(n,n-1)$-forms (see \cite[Theorem~1]{weinstock2}), there is a sequence of forms $\psi_\nu$
 in $\Dd^{n,n-2}(\cx^n)$ such that $\dbar \psi_\nu\to \phi$ on $\overline{\Omega}$ in the Fr\'{e}chet space $\mathcal{C}^\infty_{n,n-1}(\overline{\Omega})$. Therefore, noting that $\gamma$ is supported in $\overline{\Omega}$, we have
 \begin{align*}
 \pair{\gamma}{\phi}&= \lim_{\nu\to \infty} \pair{\gamma}{\dbar \psi_\nu}
 = -\lim_{\nu\to \infty} \pair{\dbar\gamma}{\psi_\nu}=0.
 \end{align*} \end{proof}

\section{Boundary currents on product domains}

\subsection{Notation and terminology}
Throughout this section and the next $\Mm_j$, $\Mm$, $\Omega$ and $\Omega_j$ will have the same meanings as in \eqref{eq-mproduct}, \eqref{eq-omegaproduct} and \eqref{eq-omegaj} respectively. We introduce some notation to describe the geometry of $\Mm$ and $\Omega$. We set $n_j=\dim_\cx \Mm_j$.
For $j=1,\dots, N$,  let $\pi_j:\Mm \to \Mm_j $
be the natural projection onto the $j$-th factor.
 If for each $j$, $\phi_j$ is a form on $\Mm_j$, then we define
\begin{equation}\label{eq-formtensor}
\phi_1\tensor \dots\tensor\phi_N = \pi_1^* \phi_1\wedge \pi_2^* \phi_2 \wedge \dots \wedge \pi_N^*\phi_N.
\end{equation}
For $j=1,\dots, N$, let $(p_j,q_j)$ be ordered pairs of integers with $0\leq p_j, q_j\leq n_j$, and let $P=\sum_{j=1}^N p_j, Q=\sum_{j=1}^N q_j$. We denote the $\cx$-linear span of the forms
$\Set{\phi_1\tensor \dots\tensor\phi_N| \phi_j\in \Dd^{p_j,q_j}(\Mm_j)}$
by 
\begin{equation}\label{eq-algtens}
\Dd^{p_1,q_1}(\Mm_1)\tensor\dots\tensor\Dd^{p_N,q_N}(\Mm_N),
\end{equation}
which is the  {\em algebraic tensor product} of the spaces $\Dd^{p_j,q_j}(\Mm_j)$. The closure of the 
space \eqref{eq-algtens} in $\Dd^{P,Q}(\Mm)$ is the {\it topological} 
tensor product of the spaces  $\Dd^{p_j,q_j}(\Mm_j)$ and is denoted by
\[ \Dd^{p_1,q_1}(\Mm_1)\csor\dots\csor\Dd^{p_N,q_N}(\Mm_N).\]
We can define tensor products of spaces of currents in the same way.
 
 We will denote by $\widehat{\Mm}_j$ the product of all the $\Mm_k$ except $\Mm_j$, i.e., 
$\widehat{\Mm}_j=\Mm_1\times\dots\times\Mm_{j-1}\times\Mm_{j+1}\times \dots\times\Mm_N,
$
and then we  will write $ \Mm =\Mm_j \times \widehat{\Mm}_j$. Throughout the paper
we will assume that all products  are reordered in the standard way, i.e., the factor $\Mm_j$ 
is to be inserted into the  slot between $\Mm_{j-1}$ and $\Mm_{j+1}$.  Similarly, we can write 
\eqref{eq-omegaj} as $\Omega_j = D_j\times \widehat{\Mm}_j$, and we have
$\partial\Omega_j=\partial D_j\times\widehat{\Mm}_j$, keeping in mind the reordering. 
We will similarly keep the same notation $\csor$ for a ``reordered'' tensor product, for example
\begin{align*} \Dd'_0(\partial\Omega_j)&=\Dd'_0(\partial D_j\times \widehat{\Mm}_j)\\
&\cong \Dd'_0(\partial D_j)\csor\Dd'_0(\widehat{\Mm}_j) ,
\end{align*}
where in the last line we have used the Schwartz kernel theorem (see \cite[Theorem~51.7]{trevesbook}), and the tensor product is reordered.
In this notation, one can  write down the direct sum decomposition
\begin{equation}\label{eq-01dec}
\mathcal{D}'_{0,1}(\Mm) = \bigoplus_{j=1}^N \mathcal{D}'_{0,1}({\Mm}_j)\csor \mathcal{D}'_{0,0}(\widehat{\Mm}_j),
\end{equation}
which is easily established using the degree considerations. Given a current $\gamma\in \Dd'_{0,1}(\Mm)$ 
we can therefore write uniquely
\begin{equation}\label{eq-01dec2}
\gamma= \sum_{k=1}^N \gamma_k, \ \  \text{ with  }
\gamma_k\in \Dd'_{0,1}(\Mm_k)\csor \mathcal{D}'_{0,0} (\widehat{\Mm}_k) .
\end{equation}
We will refer to \eqref{eq-01dec2} as the {\em standard decomposition} of a (0,1)-form on the product manifold $\Mm$.

 We denote by $\widehat{D}_j$ the domain in $\widehat{\Mm}_j$ which is the product of all the $D_k$ except $D_j$:
\[ \widehat{D}_j=D_1\times\dots\times D_{j-1}\times D_{j+1}\times \dots\times D_N,
 \]
 and we call $\partial D_j\times \widehat{D}_j$ the $j$-th {\em open face} of $\Omega$. Its closure is denoted 
 by $F_j$, and will be called the {\em $j$-th face} of $\Omega$. Then $F_j$ has the representations
 \begin{align*}F_k&=\partial D_k \times \overline{\widehat{D}_k}\\ 
&= (\partial D_k\times\widehat{\Mm}_k)\cap (\overline{D_k}\times \overline{\widehat{D}_k})\\
&= \partial\Omega_k\cap \overline{\Omega} .
\end{align*}
  Note that the open face $\partial D_j\times \widehat{D}_j$ and the face $F_j$   are subsets of the manifold $\partial\Omega_j$, which we will call the $j$-th {\em extended face} of $\Omega$.
  
We let $\jmath^k:\partial D_k\to \Mm_k$ denote the inclusion map. If $\iota^k:\partial\Omega_k\to \Mm$ denotes 
the inclusion map of the extended face $\partial\Omega_k$ in the product manifold $\Mm$, we can write:
\begin{equation}\label{eq-iotak}
\iota^k= \jmath^k\times \id ,
\end{equation}
where $\id:\widehat{\Mm}_k \to\widehat{\Mm}_k$ denotes  the identity map, and  $\times$ is the reordered direct product of {\em maps}  (with respect to the reordering $\Mm= \Mm_k \times \widehat{\Mm}_k$).
Then we can define  pushforward maps $\jmath^k_*:\Dd'_0(\partial D_k)\to \Dd'_1(\Mm_k)$ , and  $\iota^k_*:\Dd'_0(\partial\Omega_k)\to \Dd'_1(\Mm)$ as in \eqref{eq-pushforward}. Thanks to \eqref{eq-iotak}, we see that these two are related by
\begin{equation}\label{eq-iotatensor}
\iota^k_*= \jmath^k_*\csor\id .
\end{equation}
Here $\id$ denotes the identity on $\Dd'_0(\widehat{\Mm}_k)$, and $\csor$ has an obvious meaning as a reordered tensor product of continuous maps of topological vector spaces. 

\subsection{Some computations with the $\dbar$-operator on a product manifold}
From the Cauchy-Riemann operators $\dbar:\Dd'_*(\Mm_k)\to \Dd'_*(\Mm_k)$ and $\dbar:\Dd'_*(\widehat{\Mm}_k)\to  \Dd'_*(\widehat{\Mm}_k)$ we can construct two ``partial Cauchy-Riemann operators''
$\dbar_{\Mm_k}: \Dd'_*(\Mm)\to \Dd'_*(\Mm)$ and $\dbar_{\widehat{\Mm}_k}:\Dd'_*(\Mm)\to \Dd'_*(\Mm)$ using the reordered product representation $\Mm=\Mm_k\times \widehat{\Mm}_k$ and setting
\begin{equation}\label{eq-dbarmk}
\dbar_{\Mm_k}= \dbar\csor \id,
\end{equation}
and
\begin{equation}\label{eq-dbaralongfibers}
 \dbar_{\widehat{\Mm}_k} = {\rm \id_k} \csor \dbar.
 \end{equation}
 In \eqref{eq-dbarmk},  $\dbar$ denotes the $\dbar$-operator on $\Mm_k$, whereas $\id$ is the identity on $\widehat{\Mm}_k$. On the other hand, in \eqref{eq-dbaralongfibers}, $\dbar$ is the $\dbar$-operator on $\widehat{\Mm}_k$ and $\id_k$ denotes the identity on $\Mm_k$.
Intuitively, $\dbar_{\Mm_k}$ takes the derivative along the factor $\Mm_k$ only and $\dbar_{\widehat{\Mm}_k}$ 
takes the derivative along the factor $\widehat{\Mm}_k$.

\begin{prop}\label{prop-proddbarclosed}Let $\gamma\in \Dd'_{0,1}(\Mm)$ be such that $\dbar\gamma=0$ and let $\gamma_k$ be as in \eqref{eq-01dec2}.	  Then,
\begin{enumerate}
\item for $k=1,\dots, N$, we have
\begin{equation}\label{eq-TCR}
\dbar_{\Mm_k}\gamma_k=0,
\end{equation}
\item  for $j\not =k$, with $j,k=1,\dots, N$, we have
\begin{equation} \label{eq-edge}
\dbar_{\Mm_j}\gamma_k - \dbar_{\Mm_k}\gamma_j=0.
\end{equation}
\end{enumerate}
\end{prop}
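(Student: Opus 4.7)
My plan is to exploit the fact that on a product manifold the $\dbar$ operator splits according to the factors, and then match bidegree types in the equation $\dbar\gamma=0$.

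First I would observe that the $\dbar$ operator on $\Mm=\Mm_1\times\cdots\times\Mm_N$ admits the decomposition $\dbar = \sum_{k=1}^N \dbar_{\Mm_k}$, which is immediate in local product coordinates and extends to currents by continuity. Next I would refine the direct sum decomposition \eqref{eq-01dec} one bidegree higher: by the same degree-counting argument (together with the Schwartz kernel theorem), we have
\begin{equation*}
\Dd'_{0,2}(\Mm) \;=\; \bigoplus_{k=1}^N \Dd'_{0,2}(\Mm_k)\csor\Dd'_{0,0}(\widehat{\Mm}_k) \;\oplus\; \bigoplus_{1\le j<k\le N} \Dd'_{0,1}(\Mm_j)\csor\Dd'_{0,1}(\Mm_k)\csor\Dd'_{0,0}(\widehat{\Mm}_{j,k}),
\end{equation*}
where $\widehat{\Mm}_{j,k}$ denotes the product of all $\Mm_\ell$ with $\ell\neq j,k$. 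This direct-sum decomposition is the essential bookkeeping device: any $\dbar$-closed $(0,2)$-current must have each of its components in this splitting vanish independently.

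Next I would plug $\gamma=\sum_{k=1}^N \gamma_k$ into $\dbar\gamma=0$ and obtain
\begin{equation*}
0 \;=\; \dbar\gamma \;=\; \sum_{k=1}^N\sum_{j=1}^N \dbar_{\Mm_j}\gamma_k \;=\; \sum_{k=1}^N \dbar_{\Mm_k}\gamma_k \;+\; \sum_{j\neq k} \dbar_{\Mm_j}\gamma_k.
\end{equation*}
Inspecting the bidegree of each summand against the decomposition above: $\dbar_{\Mm_k}\gamma_k$ lies in $\Dd'_{0,2}(\Mm_k)\csor\Dd'_{0,0}(\widehat{\Mm}_k)$, the ``pure'' $k$-th slot, while for $j\neq k$ both $\dbar_{\Mm_j}\gamma_k$ and $\dbar_{\Mm_k}\gamma_j$ lie in the mixed slot $\Dd'_{0,1}(\Mm_j)\csor\Dd'_{0,1}(\Mm_k)\csor\Dd'_{0,0}(\widehat{\Mm}_{j,k})$. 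Since the decomposition is a direct sum, equating the component in each pure $k$-slot to zero yields \eqref{eq-TCR}, while equating the component in each mixed $(j,k)$-slot to zero yields the edge compatibility \eqref{eq-edge} (with the sign following from tracking how each $d\bar z^j\wedge d\bar z^k$ is assembled when the new exterior factor is inserted in the $j$-th or $k$-th position).

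The main subtlety I expect is purely a matter of sign bookkeeping: whether the cross-term is $\dbar_{\Mm_j}\gamma_k+\dbar_{\Mm_k}\gamma_j$ or $\dbar_{\Mm_j}\gamma_k-\dbar_{\Mm_k}\gamma_j$ depends on the Koszul-sign convention used to define $\dbar\csor\id$ and $\id_k\csor\dbar$ in \eqref{eq-dbarmk}--\eqref{eq-dbaralongfibers}, and on the order in which the inserted $d\bar z^k_\alpha$ is moved past the existing $d\bar z^j_\beta$. Carrying out the calculation once on a pure tensor $\gamma_k=\eta_k\csor\alpha_k$ (with $\eta_k$ a $(0,1)$-current on $\Mm_k$ and $\alpha_k$ a distribution on $\widehat{\Mm}_k$) fixes the sign in the convention of the paper, and density of such pure tensors in $\Dd'_{0,1}(\Mm_k)\csor\Dd'_{0,0}(\widehat{\Mm}_k)$ together with continuity of $\dbar$ extends the identity to arbitrary $\gamma_k$. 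There is no deeper analytic obstacle: the proposition is essentially a coordinate-wise restatement of $\dbar\gamma=0$ under the canonical direct-sum decomposition of $(0,1)$- and $(0,2)$-currents on a product.
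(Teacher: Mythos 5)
Your proposal is correct and follows essentially the same route as the paper: expand $\dbar\gamma$ via the product structure and kill each component separately in the direct-sum decomposition of $\Dd'_{0,2}(\Mm)$ into pure and mixed slots, which is exactly the paper's \eqref{eq-02dec}. The only caveat, which you already flag, is that in the paper's convention $\dbar=\sum_j\sigma_j\dbar_{\Mm_j}$ with Koszul sign operators $\sigma_j$ rather than the unsigned sum, and carrying those signs through on the degree-one components $\gamma_k$ is precisely what produces the minus sign in \eqref{eq-edge}.
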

To prove this, for $j=1,\dots, N$, define an operator $\sigma_j$ on the space of forms on $\Mm$ in the following way. We define $\sigma_1$ to be the identity operator and if $j\geq 2$, for forms
$\phi_k$ on the $\Mm_k$ of a fixed bidegree, we set
\[ \sigma_j (\phi_1\tensor\dots \tensor \phi_N)= \left(-1\right)^{ \sum_{k=1}^{j-1}\deg (\phi_k)} \phi_1\tensor \dots\tensor \phi_N,\]
and then extend by linearity and continuity to $\Dd'_*(\Mm)$. 
\begin{lem}On $\Mm$, we have
\begin{equation}\label{eq-dbarprod}
\dbar = \sum_{j=1}^N \sigma_j \dbar_{\Mm_j} .
\end{equation}
\end{lem}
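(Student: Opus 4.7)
The plan is to verify \eqref{eq-dbarprod} first on decomposable smooth forms $\phi = \phi_1 \tensor \cdots \tensor \phi_N$, each $\phi_k \in \Dd^{p_k,q_k}(\Mm_k)$ of pure bidegree, and then extend the identity to all currents by continuity and density. All three operators $\dbar$, $\dbar_{\Mm_j}$, and $\sigma_j$ act continuously on $\Dd'_*(\Mm)$: the first by the standard duality argument, and the latter two because they are built as reordered tensor products of continuous operators on the factors. Since the algebraic tensor product $\Dd^{p_1,q_1}(\Mm_1) \tensor \cdots \tensor \Dd^{p_N,q_N}(\Mm_N)$ is dense in $\Dd'_{P,Q}(\Mm)$ (by the Schwartz kernel theorem already invoked in the paper), this reduction to decomposable forms is enough.

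On such a form $\phi = \pi_1^*\phi_1 \wedge \cdots \wedge \pi_N^*\phi_N$, the computation proceeds as follows. Since $\dbar$ commutes with pullback under the holomorphic projections $\pi_j$ and satisfies the graded Leibniz rule $\dbar(\alpha \wedge \beta) = \dbar\alpha \wedge \beta + (-1)^{\deg \alpha}\,\alpha \wedge \dbar\beta$, iterated application produces
\[
\dbar\phi = \sum_{j=1}^N (-1)^{\sum_{k=1}^{j-1} \deg \phi_k}\, \pi_1^*\phi_1 \wedge \cdots \wedge \pi_j^*(\dbar\phi_j) \wedge \cdots \wedge \pi_N^*\phi_N.
\]
On the other hand, the definition \eqref{eq-dbarmk} of $\dbar_{\Mm_j} = \dbar \csor \id$ gives, without any sign,
\[
\dbar_{\Mm_j}\phi = \pi_1^*\phi_1 \wedge \cdots \wedge \pi_j^*(\dbar\phi_j) \wedge \cdots \wedge \pi_N^*\phi_N,
\]
and applying $\sigma_j$ multiplies the result by $(-1)^{\sum_{k<j} \deg \phi_k}$. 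Crucially, this sign depends only on $\deg\phi_1, \dots, \deg\phi_{j-1}$, which are unchanged when $\dbar$ is applied to the $j$-th factor, so no correction is needed. Summing over $j$ reproduces $\dbar\phi$ term by term, verifying \eqref{eq-dbarprod} on decomposable forms.

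The only delicate point, and the one I expect to be the main obstacle, is the sign bookkeeping: the operator $\sigma_j$ is engineered precisely to install the Koszul sign that the graded Leibniz rule for $\dbar$ produces when one commutes $\dbar$ past the first $j-1$ factors of a wedge product, and one has to check that no extra sign arises from the reordering inherent in the $\csor$ notation. Once this sign match is confirmed, extending the identity from decomposable smooth forms to all currents by continuity and density is routine, and the lemma follows.
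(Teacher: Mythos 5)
Your proof is correct and follows essentially the paper's approach: reduce to decomposable forms of pure bidegree by linearity and density, then match the Koszul signs produced by the graded Leibniz rule for $\dbar$ against the signs installed by $\sigma_j$. The only cosmetic difference is that you unroll the Leibniz rule for the full $N$-fold wedge directly, whereas the paper organizes the same computation as an induction on $N$ via the two-factor splitting $\phi'\tensor\phi_N$.
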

\begin{proof}By linearity and density, it is sufficient to show this for tensor products of forms of a fixed bidegree. The statement is obvious for $N=1$, so we assume it for $N-1$, and prove it for $N$, which 
gives the proof by induction.
Let $\phi= \phi_1\tensor\dots\tensor \phi_N$, where $\phi_j$ is of fixed bidegree on $\Mm_j$. Also let
$\phi'= \phi_1\tensor\dots\tensor \phi_{N-1}$. Using the formula for the exterior derivative of a wedge product,
we have
\begin{align*}
\dbar\phi &= \dbar\phi'\tensor \phi_N + (-1)^{\deg \phi'} \phi'\tensor \dbar\phi_N\\
&= \left( \sum_{j=1}^{N-1} \sigma_j \dbar_{\Mm_j}\phi'\right)\tensor \phi_N + \sigma_N (\phi'\tensor \dbar\phi_N)\\
&= \left( \sum_{j=1}^{N} \sigma_j \dbar_{\Mm_j}\right)\phi.
\end{align*}
\end{proof}
\begin{proof}[Proof of Proposition~\ref{prop-proddbarclosed}]
We write $\gamma= \sum_{k=1}^N \gamma_k$. Then we claim that 
\[ \sigma_j \dbar_{\Mm_j}\gamma_k = \begin{cases}\phantom{-}\dbar_{\Mm_j}\gamma_j   & \text{if $j\leq k$}\\ 
-\dbar_{\Mm_j}\gamma_j  & \text{if $j>k$.}\end{cases} \]
Indeed, it is sufficient to consider the case when $\gamma_k=\alpha_1\tensor\dots\tensor \alpha_N$, where 
each $\alpha_\ell$ is of degree 0 except $\alpha_k$ which is of degree $(0,1)$. Taking the $\dbar$ with respect to $\Mm_j$, 
we see that the first $j-1$ factors in the representation of $\dbar\gamma_k$ as a tensor product are all of degree 0 if 
$j\leq k$, but contains the single factor $\alpha_k$ of degree 1 if $j>k$. The claim follows from the 
definition of $\sigma_j$. Therefore,
\begin{align}
\dbar\gamma &= \left(\sum_{j=1}^N\sigma_j \dbar_{\Mm_j}  \right) \left(\sum_{k=1}^N \gamma_k  \right)\nonumber\\
&= \sum_{j=1}^N\sum_{k=1}^N\sigma_j\dbar_{\Mm_j}\gamma_k\nonumber\\
&= \sum_{k=1}^N \dbar_{\Mm_k}\gamma_k + \sum_{j<k} \left(\dbar_{\Mm_j}\gamma_k - \dbar_{\Mm_k}\gamma_j\right).\label{eq-dbargammadec}
\end{align}
If $j\not=k$, let $\widehat{\Mm}_{j,k}$ denote the product of all the factors of $\Mm$ except $\Mm_j$ and $\Mm_k$. We then have the direct sum decomposition for
$(0,2)$-currents:
\begin{equation}\label{eq-02dec}
\Dd'_{0,2}(\Mm)= \bigoplus_{k=1}^N \Dd'_{0,2}(\Mm_k)\csor \Dd'_{0,0}(\widehat{\Mm}_k) \oplus
\left(\bigoplus_{j<k} \Dd'_{0,1}(\Mm_j)\csor \Dd'_{0,1}(\Mm_k) \csor \Dd'_{0,0}(\widehat{\Mm}_{j,k}) \right),
\end{equation}
where the tensor products are reordered so that each direct summand is a subspace of $\Dd'_{0,2}(\Mm)$.
Note that the terms in \eqref{eq-dbargammadec} correspond to the direct summands in the decomposition \eqref{eq-02dec}, which must each vanish since $\dbar\gamma=0$.
\end{proof}
We can represent the $\dbar_{\widehat{\Mm}_k}$ operator of \eqref{eq-dbaralongfibers}	 in terms of the $\partial_{{\Mm}_j}$. Indeed,  we can 
show that
\[ \dbar_{\widehat{\Mm}_k} = \sum_{j\not=k} \sigma_{jk}\overline\partial_{{\Mm}_j},\]
where $\sigma_{jk}\in \{\pm 1\}$. (The precise sign of $\sigma_{jk}$, while not difficult to find, is irrelevant for the intended application.)
 \begin{prop} Let $\gamma$ and $\gamma_k$ be as in Proposition	~\ref{prop-proddbarclosed}. Then 
\begin{equation}\label{eq-support}
\left.\left(\dbar_{\widehat{\Mm}_k}\gamma_k\right)\right|_{\partial D_k\times\widehat{D}_k}=0.
\end{equation}
\end{prop}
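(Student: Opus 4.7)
The plan is to combine the two identities from Proposition~\ref{prop-proddbarclosed} with the decomposition $\dbar_{\widehat{\Mm}_k}=\sum_{j\neq k}\sigma_{jk}\dbar_{\Mm_j}$ recalled just above the statement. Applying this operator identity to $\gamma_k$ and then invoking the edge relation~\eqref{eq-edge} gives
\[
\dbar_{\widehat{\Mm}_k}\gamma_k=\sum_{j\neq k}\sigma_{jk}\dbar_{\Mm_j}\gamma_k=\sum_{j\neq k}\sigma_{jk}\dbar_{\Mm_k}\gamma_j.
\]
So it suffices to show that for each $j\neq k$, the current $\dbar_{\Mm_k}\gamma_j$ vanishes on a neighborhood of the open face $\partial D_k\times\widehat{D}_k$ in $\Mm$.

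The next step is a support argument. In the intended applications the current $\gamma$ is supported in $\partial\Omega=\bigcup_\ell F_\ell$ (in the $\mathcal{Y}^{0,1}_\Omega(\Mm)$ setting this is forced by the face-wise representation~\eqref{eq-facewise}), and the uniqueness of the type decomposition~\eqref{eq-01dec2} then guarantees that each summand $\gamma_j$ inherits the support in the corresponding extended face $\partial\Omega_j=\partial D_j\times\widehat{\Mm}_j$. A point $p=(p_k,\hat p)\in\partial D_k\times\widehat{D}_k$ has $\hat p_j\in D_j$ for every $j\neq k$, and since $D_j$ is open, an entire neighborhood of $p$ in $\Mm$ is disjoint from $\partial\Omega_j$. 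Consequently $\gamma_j$ vanishes in a neighborhood of $p$, and the same is therefore true of $\dbar_{\Mm_k}\gamma_j$.

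Summing over $j\neq k$ gives $\dbar_{\widehat{\Mm}_k}\gamma_k=0$ on a neighborhood of every point of $\partial D_k\times\widehat{D}_k$, which is precisely \eqref{eq-support}. The main obstacle, to my eye, is not the computation itself but the bookkeeping around the support hypothesis: $\dbar$-closedness of $\gamma$ alone does not force $\dbar_{\Mm_k}\gamma_j$ to vanish on the open face, so one must be careful to verify that the localization of each $\gamma_j$ to its extended face $\partial\Omega_j$ is genuinely available from the ambient context before one is entitled to convert the edge relation into the desired vanishing statement.
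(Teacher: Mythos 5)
Your argument is essentially the paper's own proof: you apply the decomposition $\dbar_{\widehat{\Mm}_k}=\sum_{j\neq k}\sigma_{jk}\dbar_{\Mm_j}$, use the edge relation \eqref{eq-edge} to replace $\dbar_{\Mm_j}\gamma_k$ by $\dbar_{\Mm_k}\gamma_j$, and conclude by the support argument that $\supp(\gamma_j)\subset\partial D_j\times\widehat{\Mm}_j$ is disjoint from the open face $\partial D_k\times\widehat{D}_k$ for $j\neq k$. Your caveat about the support hypothesis is well taken — the paper's proof also uses $\supp(\gamma_j)\subset\partial\Omega_j$ tacitly, this being supplied by the facewise structure in the intended application — so your write-up is correct and matches the paper's route.
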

\begin{proof}  By conclusion (2) 
of  Proposition~\ref{prop-proddbarclosed} we have for $j\not=k$ that $\dbar_{\Mm_j}\gamma_k =\dbar_{\Mm_k}\gamma_j.$ 
Consequently, $\dbar_{\Mm_j}\gamma_k=0$ outside $\supp(\dbar_{\Mm_k}\gamma_j)\subset\supp(\gamma_j)$.  Therefore, outside the
set $\bigcup_{j\not=k} \supp(\gamma_j)$ we have
\[ \dbar_{\widehat{\Mm}_k}\gamma_k = \sum_{j\not=k} \sigma_{jk}\partial_{{\Mm}_j}\gamma_k=0.\]
The statement follows on noting that $\partial D_k\times\widehat{D}_k$ is disjoint from $\supp(\gamma_j)\subset \partial D_j\times \widehat{\Mm_j}$ for each $j\not =k$.
\end{proof}

 \subsection{Existence of face distributions} 
 We begin with the following Lemma:
 
 \begin{lem}\label{lem-facestruct} 
 Let $\Nn$ be a smooth manifold of dimension $n$,  let $S$ be a smooth hypersurface in $\Nn$, and let $\iota:S\to \Nn$ be the inclusion map and let $\gamma\in \Dd'_1(\Nn)$.
 Then there is a distribution $\alpha\in \Dd'_0(S)$ such that $\iota_*(\alpha)=\gamma$ if and only if for each $\phi\in \Dd^{n-1}(\Nn)$ such that $\iota^*\phi=0$, we have $\pair{\gamma}{\phi}=0$.
 Consequently, the subspace of currents in $\Dd'_1(\Nn)$ of the form $\iota_*(\alpha)$ with $\alpha\in \Dd'_0(S)$ is closed. 
 \end{lem}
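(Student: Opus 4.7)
The plan is to first dispatch the ``only if'' direction, then construct a continuous right inverse to the pullback $\iota^{\ast}$ to produce $\alpha$, and finally deduce closedness from a duality characterization.

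The ``only if'' direction is immediate from the definition of pushforward: if $\gamma=\iota_{\ast}\alpha$ and $\iota^{\ast}\phi=0$, then $\pair{\gamma}{\phi}=\pair{\iota_{\ast}\alpha}{\phi}=\pair{\alpha}{\iota^{\ast}\phi}=0$.

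For the converse, the main task is to construct a continuous linear extension operator
\[
E:\Dd^{n-1}(S)\to \Dd^{n-1}(\Nn)\qquad\text{with}\qquad \iota^{\ast}\circ E=\id.
\]
I would build this using a tubular neighborhood $U\cong S\times(-\epsilon,\epsilon)$ of $S$ in $\Nn$ with projection $\pi:U\to S$ and transverse coordinate $t$. Fixing a cutoff $\chi\in\mathcal{C}_c^{\infty}((-\epsilon,\epsilon))$ with $\chi(0)=1$, I would set
\[
E\psi = \chi(t)\,\pi^{\ast}\psi \qquad (\text{extended by }0\text{ outside }U).
\]
Since $\supp\psi$ is compact in $S$, $\supp(E\psi)\subset\supp\psi\times\supp\chi$ is compact in $\Nn$; clearly $\iota^{\ast}E\psi=\psi$, and $E$ is continuous at each step of the inductive limit defining the LF-topology on $\Dd^{n-1}(S)$. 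Now define
\[
\pair{\alpha}{\psi}:=\pair{\gamma}{E\psi},\qquad \psi\in \Dd^{n-1}(S),
\]
which is a continuous linear functional, hence $\alpha\in\Dd'_0(S)$. To verify $\iota_{\ast}\alpha=\gamma$, I would note that for any test form $\phi\in\Dd^{n-1}(\Nn)$ the form $\phi-E(\iota^{\ast}\phi)$ pulls back to zero under $\iota$, so the hypothesis yields $\pair{\gamma}{\phi}=\pair{\gamma}{E(\iota^{\ast}\phi)}=\pair{\alpha}{\iota^{\ast}\phi}=\pair{\iota_{\ast}\alpha}{\phi}$.

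For the closedness claim, the characterization just proved says that the image of $\iota_{\ast}:\Dd'_0(S)\to\Dd'_1(\Nn)$ coincides with the annihilator
\[
\bigl\{\gamma\in\Dd'_1(\Nn):\pair{\gamma}{\phi}=0\text{ for all }\phi\in\Dd^{n-1}(\Nn)\text{ with }\iota^{\ast}\phi=0\bigr\},
\]
which is the intersection over such $\phi$ of the kernels of the continuous linear functionals $\gamma\mapsto\pair{\gamma}{\phi}$, and thus is closed in the strong topology.

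The main obstacle is simply writing the extension operator cleanly: checking that the tubular neighborhood and cutoff construction really gives an operator continuous for the LF-topologies on test-form spaces, and that the support conditions behave well under the inductive limit. Everything after $E$ is built is formal duality.
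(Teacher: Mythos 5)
Your proof is correct and follows essentially the same route as the paper: build a continuous linear extension operator $E$ with $\iota^{\ast}\circ E=\id$, set $\pair{\alpha}{\psi}=\pair{\gamma}{E\psi}$, use the hypothesis on forms with $\iota^{\ast}\phi=0$ to get $\iota_{\ast}\alpha=\gamma$, and obtain closedness because the image is the common kernel of the continuous evaluation functionals $\gamma\mapsto\pair{\gamma}{\phi}$ over all $\phi$ with $\iota^{\ast}\phi=0$. The only cosmetic difference is that you construct $E$ via a tubular neighborhood (where, for noncompact $S$, one should allow a variable-width tube or, as the paper does, glue local coordinate extensions with a partition of unity), which changes nothing essential.
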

 
 \begin{proof} 
 If $\gamma= \iota_*\alpha$, then for any $\phi\in \Dd^{n-1}(\Nn)$ with $\iota^*\phi=0$, we have $\pair{\gamma}{\phi}= \pair{\alpha}{\iota^*\phi}=0$. On the other hand, there is a continuous linear  extension operator $E:\Dd^{n-1}(S)\to \Dd^{n-1}(\Nn)$, such that for each $\psi\in \Dd^{n-1}(S)$, we have $\iota^*(E\psi)=\psi$. The existence of $E$ is obvious locally using coordinates, and follows globally using a partition of unity argument. We define $\alpha\in \Dd'_0(S)$ by $\pair{\alpha}{\psi}= \pair{\gamma}{E\psi}$.  Note that $\alpha$ is independent of the particular continuous extension operator 
 $E$. Indeed, if $\widetilde{\psi}\in\Dd^{n-1}(\Nn) $ is any other extension of $\psi\in \Dd^{n-1}(S)$ 
 (i.e., $\iota^*\widetilde{\psi}=\psi$), then $\pair{\gamma}{E\psi}-\pair{\gamma}{\widetilde{\psi}}= \pair{\gamma}{E\psi-\widetilde{\psi}}=0$, since $\iota^*(E\psi-\widetilde{\psi})=0$.  For the $\alpha$ so defined,  and any $\phi\in \Dd^{n-1}(\Nn)$, we have $\pair{\iota_*\alpha}{\phi}=\pair{\alpha}{\iota^*\phi}=\pair{\gamma}{\phi}$, since $\phi$ is an extension of $\iota^*\phi$.  
 \end{proof}
 \begin{prop}\label{prop-bcfacewise}
 Let $f\in \mathcal{A}^{-\infty}(\Omega)$. Then there are distributions $\alpha_k\in \Dd'_0(\partial\Omega_k)$ supported on $F_k=\partial\Omega_k\cap \overline{\Omega}$ such that
 \[ \bc f = \sum_{k=1}^N \iota^k_*(\alpha_k)^{0,1},\]
where the notation is as in \eqref{eq-iotak} and \eqref{eq-iotatensor}. Further, the $k$-th summand on 
the right-hand side is 
precisely the $k$-th component of the standard decomposition \eqref{eq-01dec2} and lies in $ \Dd'_{0,1}(\Mm_k)\csor \Dd'_0(\widehat{\Mm}_k).$ 
  \end{prop}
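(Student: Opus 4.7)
The plan is to decompose $\bc f$ via \eqref{eq-01dec2} and apply a bidegree-refined analogue of Lemma~\ref{lem-facestruct} to each component. First, \eqref{eq-01dec2} yields
\[
\bc f = \sum_{k=1}^N \gamma_k,\qquad \gamma_k \in \Dd'_{0,1}(\Mm_k) \csor \Dd'_0(\widehat{\Mm}_k),
\]
so it suffices, for each $k$, to produce $\alpha_k\in\Dd'_0(\partial\Omega_k)$ supported on $F_k$ with $\iota^k_*(\alpha_k)^{0,1}=\gamma_k$.

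To analyze $\gamma_k$, I will evaluate it on an arbitrary test form $\psi\in\Dd^{n,n-1}(\Mm)$. Decomposing $\psi$ by bidegree with respect to $\Mm=\Mm_k\times\widehat{\Mm}_k$, only the ``type-$k$'' piece $\psi^{(k)}$, of bidegree $(n_k,n_k-1)$ on $\Mm_k$ and $(\widehat{n}_k,\widehat{n}_k)$ on $\widehat{\Mm}_k$, can pair non-trivially with $\gamma_k$, so $\pair{\gamma_k}{\psi}=\pair{\bc f}{\psi^{(k)}}$. Using Theorem~\ref{thm-bcexistence} and the decomposition of the top stratum of $\partial\Omega$ as the disjoint union of the open faces $\partial D_j\times\widehat{D}_j$, I note that for $j\neq k$ the pullback of $\psi^{(k)}$ to $\partial D_j\times\widehat{D}_j$ vanishes, since the top-bidegree $\widehat{\Mm}_k$-factor of $\psi^{(k)}$ has real degree $2\widehat{n}_k$, one more than the dimension $2\widehat{n}_k-1$ of the $\widehat{\Mm}_k$-slice of $\partial D_j\times\widehat{D}_j$. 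Only the $j=k$ contribution survives, yielding
\[
\pair{\gamma_k}{\psi}=\lim_{\epsilon\downarrow 0}\int_{\partial D_k\times\widehat{D}_k}f_\epsilon\,\psi^{(k)}.
\]
From this formula, two vanishing properties are immediate: $(i)$ if $\iota^{k*}\psi=0$, then $\psi^{(k)}$ pulls back to zero on $\partial\Omega_k$ and hence on $\partial D_k\times\widehat{D}_k$, so the integrand vanishes; $(ii)$ if $\supp\psi\cap F_k=\emptyset$, then $\psi$ vanishes on the integration domain $\partial D_k\times\widehat{D}_k\subset F_k$, so the integrand again vanishes.

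To extract $\alpha_k$, I invoke the following bidegree-refined analogue of Lemma~\ref{lem-facestruct}: a $(0,1)$-current $\gamma\in\Dd'_{0,1}(\Mm)$ equals $(\iota_*\alpha)^{0,1}$ for some $\alpha\in\Dd'_0(S)$, with $S\subset\Mm$ a smooth real hypersurface and $\iota$ its inclusion, if and only if $\pair{\gamma}{\psi}=0$ for every $\psi\in\Dd^{n,n-1}(\Mm)$ with $\iota^*\psi=0$. Its proof mirrors Lemma~\ref{lem-facestruct}: one verifies that the restriction $\iota^{k*}:\Dd^{n,n-1}(\Mm)\to\Dd^{2n-1}(\partial\Omega_k)$ is surjective (a local computation in holomorphic coordinates adapted to $\partial D_k$, globalized by a partition of unity) and admits a continuous section $E$ which can be chosen to localize supports into any prescribed tubular neighborhood of $\partial\Omega_k$, and then sets $\pair{\alpha_k}{\eta}:=\pair{\gamma_k}{E\eta}$. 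Well-definedness is supplied by $(i)$, and $\supp\alpha_k\subset F_k$ by $(ii)$ together with the localization property of $E$. Since $\iota^k_*=\jmath^k_*\csor\id$ by~\eqref{eq-iotatensor}, each summand $\iota^k_*(\alpha_k)^{0,1}$ lies automatically in $\Dd'_{0,1}(\Mm_k)\csor\Dd'_0(\widehat{\Mm}_k)$, so by uniqueness of the direct sum~\eqref{eq-01dec} it must coincide with $\gamma_k$. The principal technical point is the surjectivity of $\iota^{k*}$ restricted to bidegree $(n,n-1)$-forms---on a smooth real hypersurface in a complex $n$-manifold, $(n,n-1)$-forms alone already realize every top-degree form---which is verified directly in adapted local coordinates.
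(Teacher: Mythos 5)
Your argument is correct and follows the same overall strategy as the paper: decompose $\bc f$ by bidegree via \eqref{eq-01dec2}, identify $\pair{\gamma_k}{\psi}$ with the limit of integrals of $f_\epsilon$ over the $k$-th open face (only the $j=k$ face contributing, by the same degree count the paper uses), and then extract a face distribution by a Lemma~\ref{lem-facestruct}-type argument. Where you differ is the mechanism of the extraction step: the paper writes $\gamma_k|_U$ as a limit of the currents $\iota^k_*\left(f_\epsilon[F_k\cap U]\right)^{0,1}$, which manifestly have facewise structure with support in $F_k$, and concludes from the closedness statement in Lemma~\ref{lem-facestruct}; you instead prove a bidegree-refined characterization (a $(0,1)$-current equals $(\iota_*\alpha)^{0,1}$ iff it annihilates every $(n,n-1)$-test form pulling back to zero on the hypersurface) and verify this annihilation for $\gamma_k$ directly from the limit formula. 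The extra ingredient your route requires, surjectivity of $(\iota^k)^*:\Dd^{n,n-1}(\Mm)\to\Dd^{2n-1}(\partial\Omega_k)$, is true and you correctly single it out as the technical point; the payoff is that you avoid any discussion of convergence of the approximating currents. Two small points to tighten. First, \eqref{eq-bc} is stated only for test forms supported in a chart $U$ admitting a vector $v$ transverse to all faces meeting $U$, so your limit formula, and hence properties $(i)$ and $(ii)$, should be derived after localizing with a partition of unity subordinate to such charts (and using that $\bc f$ is supported on $\partial\Omega$), exactly as the paper does. Second, for $\supp\alpha_k\subset F_k$ it is cleaner to note that, by the well-definedness you already established, $\pair{\alpha_k}{\eta}=\pair{\gamma_k}{\psi}$ for \emph{any} extension $\psi$ with $(\iota^k)^*\psi=\eta$, and then choose an extension supported off the closed set $F_k$ when $\supp\eta\cap F_k=\emptyset$; asking the fixed operator $E$ merely to localize into a tubular neighborhood of $\partial\Omega_k$ is not quite the property you need.
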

 \begin{proof}
 Thanks to the decomposition \eqref{eq-01dec} of (0,1)-forms on a product manifold, we can write $\bc f = \sum_{k=1}^N \gamma_k$, where $\gamma_k \in \Dd'_{0,1}(\Mm_k)\csor \Dd'_0(\widehat{\Mm}_k).$ We identify the summands $\gamma_k$.  By definition of $\bc f$, 
each $\gamma_k$ is supported on $\partial \Omega$.  We cover $\Omega$ by open sets $U$ of the type considered in  Theorem~\ref{thm-bcexistence}, i.e., $U$ is a coordinate neighborhood of $\Mm$, and there is a vector $v\in \cx^n$ such that in the coordinates on $U$, the vector $v$ points outward from $\Omega$ along each $\partial\Omega_j$   meeting $U$. Fix one such $U$, and let $\phi \in \Dd^{n,n-1}(U)$ be a form of degree $(n,n-1)$ with compact support in $U$. For $f_\epsilon$ as in Theorem~\ref{thm-bcexistence}, we have
 \begin{align}
 \pair{\bc f|_U}{\phi}&= \lim_{\epsilon\downarrow 0}\int_{\partial\Omega} f_\epsilon\phi\nonumber\\
 &=   \sum_{k=1}^N \lim_{\epsilon\downarrow 0}\int_{\partial\Omega_k} f_\epsilon|_{\overline{\Omega}\cap U}\cdot\phi\nonumber\\
&= \sum_{k=1}^N \lim_{\epsilon\downarrow 0} \pair{f_\epsilon[\overline{\Omega}\cap U\cap\partial\Omega_k]}{(\iota^k)^*\phi}_{\partial\Omega_k}\nonumber\\
&= \sum_{k=1}^N \lim_{\epsilon\downarrow 0} \pair{\iota^k_*\left(f_\epsilon[F_k\cap U]\right)^{0,1}}{\phi}.\label{eq-bcfcomp}
\end{align}
here  $F_k= \overline{\Omega}\cap\partial\Omega_k$  is the $k$-th face of $\Omega$, and $[F_k\cap U]\in \Dd'_0(\partial\Omega_k)$ denotes the 0-current of integration on the set $F_k\cap U$.  By Lemma~\ref{lem-facestruct} above, the subspace of  $\Dd'_0(\Mm)$ consisting 
 of currents of the type $\iota^k_*(\beta)$, (where $\beta\in \Dd'_0(\partial\Omega_k)$) is closed, so it easily follows that 
 there is an $\alpha_k^U\in \Dd'_0(\partial\Omega_k)$ such that $\gamma_k|_U= \iota^k_*(\alpha_k^U)^{0,1}$. The existence of $\alpha_k \in \Dd'_0(\partial \Omega_k)$ such that $\gamma_k= \iota^k_*(\alpha_k)^{0,1}$ now follows by a partition of unity argument.
From the representation \eqref{eq-bcfcomp}, it follows that $\alpha_k$ has support in the subset $F_k$ of $\partial \Omega_k$.
 \end{proof}
 
 \subsection{Currents with facewise structure} 
In view of Proposition~\ref{prop-bcfacewise} above, we make the following 
 definition: let $\Mm$ be a product manifold as in \eqref{eq-mproduct}, and let $\Omega\Subset\Mm$ be a product domain
 as in \eqref{eq-omegaproduct}. We say that a current $\gamma\in \Dd'_{0,1}(\Mm)$ has {\it facewise structure} with  respect to $\Omega$, if there are {\it face distributions} $\alpha_j\in \Dd'_0(\partial\Omega_j)$, for 
 $j=1,\dots, N$, with support in $F_j$, such that 
 \begin{equation}\label{eq-gammafacewise}
\gamma=\sum_{j=1}^N \iota^j_*(\alpha_j)^{0,1},
\end{equation}
 where $\iota^j:\partial\Omega_j\to \Mm$ is the inclusion map. Then, Proposition~\ref{prop-bcfacewise} states 
 that the boundary current of a holomorphic function of polynomial growth on a product domain has facewise structure. 
 
\begin{prop}\label{prop-alpha1alpha2}
Let $\Mm$ and $\Omega$ be as in \eqref{eq-mproduct}, \eqref{eq-omegaproduct}. Let $\gamma\in \Dd'_{0,1}(\Mm)$
satisfy the Weinstock condition and have facewise structure \eqref{eq-gammafacewise}, both with respect to $\Omega$.
Then for each $k\in \{1,\dots, N\}$,  we have 
\begin{equation}\label{eq-alphajstruct}
\alpha_k|_{\partial D_k\times \widehat{D}_k} \in \Dd'_0(\partial D_k)\csor \mathcal{A}^{-\infty}(\widehat{D}_k).
\end{equation}
\end{prop}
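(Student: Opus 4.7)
The plan is to show that $\alpha_k|_{\partial D_k\times\widehat{D}_k}$ is holomorphic in the $\widehat{D}_k$-variables and of polynomial growth there, then combine the Schwartz kernel theorem with Proposition~\ref{prop-extension} to recognize the restriction as an element of $\Dd'_0(\partial D_k)\csor \mathcal{A}^{-\infty}(\widehat{D}_k)$.

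First, the Weinstock condition applied to $\omega=\dbar\phi$ with $\phi\in\Dd^{n,n-2}(\Mm)$ forces $\dbar\gamma=0$, as in the proof of Corollary~\ref{cor-bochnerhartogs}. Writing $\gamma=\sum_j\gamma_j$ in the standard decomposition \eqref{eq-01dec2} and combining Proposition~\ref{prop-proddbarclosed} with the disjoint-support argument preceding \eqref{eq-support}, I obtain
\[
(\dbar_{\widehat{\Mm}_k}\gamma_k)\big|_{\partial D_k\times\widehat{D}_k}=0.
\]
By uniqueness of the standard decomposition \eqref{eq-01dec2} and the facewise structure \eqref{eq-gammafacewise}, the $k$-th summand must be $\gamma_k=\iota^k_*(\alpha_k)^{0,1}$. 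Because $\iota^k=\jmath^k\times\id$ and $\dbar_{\widehat{\Mm}_k}=\id_k\csor\dbar$ each act trivially on the $\Mm_k$-factor, the pushforward $\iota^k_*$ commutes with $\dbar_{\widehat{\Mm}_k}$, and the vanishing above transfers to a partial $\dbar$-equation for $\alpha_k$ itself in the $\widehat{\Mm}_k$-variables on the open face $\partial D_k\times\widehat{D}_k$.

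Next, view $\alpha_k$ through the Schwartz kernel theorem as a continuous linear map
\[
T\colon \Dd^{2n_k-1}(\partial D_k)\longrightarrow \Dd'_0(\widehat{\Mm}_k),\qquad T(\phi)=\alpha_k(\phi\tensor\,\cdot\,).
\]
Since $\supp\alpha_k\subset F_k=\partial D_k\times\overline{\widehat{D}_k}$, each $T(\phi)$ is a compactly supported distribution on $\widehat{\Mm}_k$ with support in $\overline{\widehat{D}_k}$; and since $\alpha_k$ is of finite local order on the compact set $F_k$, the map $T$ factors through $\mathcal{C}^m(\partial D_k)$ for some $m$. Dually, $T$ corresponds to a compactly supported $\Xx$-valued distribution on $\widehat{\Mm}_k$, where $\Xx$ is the Banach space of distributions on $\partial D_k$ of order $\leq m$. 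The partial $\dbar$-equation from Step~2, tested against $\phi\tensor\psi$, shows $\dbar T(\phi)=0$ on $\widehat{D}_k$; equivalently the $\Xx$-valued distribution is $\dbar$-closed, and by ellipticity of $\dbar$ in the Banach-valued setting (reduce to the scalar case via $\ell\in\Xx'$ and weak-holomorphicity $=$ holomorphicity for Banach-valued maps) it is an $\Xx$-valued holomorphic function on $\widehat{D}_k$. Its own compactly supported extension, together with the Banach-valued Proposition~\ref{prop-extension}, then places $T|_{\widehat{D}_k}$ in $\mathcal{A}^{-\infty}(\widehat{D}_k,\Xx)$, which under the nuclear identification $\mathcal{A}^{-\infty}(\widehat{D}_k,\Xx)\hookrightarrow\Dd'_0(\partial D_k)\csor\mathcal{A}^{-\infty}(\widehat{D}_k)$ is precisely \eqref{eq-alphajstruct}.

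I expect the main obstacle to lie in the rigorous execution of Step~2: the passage from the vanishing of $\dbar_{\widehat{\Mm}_k}\gamma_k$ on the open face to a clean $\dbar$-equation for $\alpha_k$ requires tracking the $(0,1)$-projection, the pushforward by $\iota^k$, and the reordering of tensor factors without introducing sign or bidegree errors. A secondary technical point is verifying that the finite-order factorization of $\alpha_k$ is uniform enough over $F_k$ to produce a genuine Banach-valued compactly supported distribution to which Proposition~\ref{prop-extension} can be applied.
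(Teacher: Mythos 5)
Your proposal is correct and follows essentially the same route as the paper: first deduce $\dbar\gamma=0$ from the Weinstock condition, use the facewise structure and the disjoint-support argument to get the vanishing of $\dbar_{\widehat{\Mm}_k}\gamma_k$ on the open face and hence partial holomorphy of $\alpha_k$ in the $\widehat{\Mm}_k$-directions, and then exploit the compact support and finite order of $\alpha_k$ to view it as a holomorphic function valued in the Banach space of distributions of bounded order on $\partial D_k$, to which the Banach-valued Proposition~\ref{prop-extension} and the nuclear tensor-product identification apply. The paper phrases the last step via the isomorphism $\mathcal{C}^{-K}(\partial D_k)\csor\mathcal{O}(\widehat{D}_k)\cong\mathcal{O}(\widehat{D}_k,\mathcal{C}^{-K}(\partial D_k))$ rather than your kernel map $T$, but this is only a difference of presentation.
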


Intuitively, this says that the distributions $\alpha_k$, which may be thought of ``restrictions'' of $\gamma$ to the faces, are each holomorphic and of polynomial growth along the complex factor, which is clearly the case for continuous boundary values on a product domain. Note also that in view of the fact that $\gamma_k = \iota^k_*(\alpha_k)$, the relation \eqref{eq-alphajstruct} is equivalent to 
\begin{equation}\label{eq-gammakstruct}
\gamma_k|_{\Mm_k\times \widehat{D}_k} \in \Dd'_{0,1}(\Mm_k)\csor \mathcal{A}^{-\infty}(\widehat{D}_k).
\end{equation}
\begin{proof}From \eqref{eq-iotatensor},	 it follows that in the decomposition of   $\gamma\in \Dd'_{0,1}(\Mm)$ into direct summands given by \eqref{eq-01dec}, the component $\gamma_k\in \Dd'_{0,1}(\Mm_k)\csor \Dd'_0(\widehat{\Mm}_k)$  is given by 
\begin{equation}\label{eq-gammakfacewise}
 \gamma_k = (\jmath^k_*\csor \id)(\alpha_k)^{0,1},
\end{equation}
where $\alpha_k\in \Dd'_0(\partial\Omega_k)$ is the $k$-th  face distribution associated with the current $\gamma$.
Combining \eqref{eq-gammakfacewise} and \eqref{eq-dbaralongfibers}, we see that 
\begin{align*}\dbar_{\widehat{\Mm}_k}\gamma_k &=(\id\csor\dbar)(\jmath^k_*\csor \id)(\alpha_k)^{0,1}\\
&=((\jmath^k_*\csor \dbar)(\alpha_k))^{0,1}\\
&=(\jmath^k_*\csor \dbar)(\alpha_k).
\end{align*}
Therefore, using \eqref{eq-support} we see that 
on the manifold $\partial D_k \times \widehat{\Mm}_k$, the current $(\id \csor \dbar)(\alpha_k)$ vanishes on the open set
$\partial D_k \times \widehat{D}_k$, i.e., $\alpha_k$ is holomorphic in the direction of $\widehat{\Mm}_k$ on the open set $\partial D_k \times \widehat{D}_k$ It follows that 
\[ \alpha_k|_{\partial D_k\times \widehat{D}_k} \in \Dd'_0(\partial D_k)\csor \mathcal{O}(\widehat{D}_k).\]

Now we note that the face distribution  $\alpha_k\in \Dd'_0(\partial \Omega_k)$ (where we recall that 
$\partial \Omega_k= \partial D_k\times \widehat{\Mm}_k$) is supported in $\partial D_k\times \overline{\widehat{D}_k}$. 
Consequently,  $\alpha_k$ is a distribution of finite order on $\partial\Omega_k$. It follows that there is an integer  $K$, such that 
\[ \alpha_k|_{\partial D_k\times \widehat{D}_k}\in \mathcal{C}^{-K}(\partial D_k)\csor \mathcal{O}(\widehat{D}_k)\cong \mathcal{O}(\widehat{D}_k,\mathcal{C}^{-K}(\partial D_k)),\]
where $\mathcal{C}^{-K}(\partial D_k)$ is the Banach space of distributions of order $K$ on $\partial D_k$ (it is a Banach space since $\partial D_k$ is compact), and the  isomorphism of  the space  $\mathcal{O}(\widehat{D}_k,\mathcal{C}^{-K}(\partial D_k))$
 with the topological tensor product $\mathcal{C}^{-K}(\partial D_k)\csor \mathcal{O}(\widehat{D}_k)$ (which makes sense 
since $\mathcal{O}(\widehat{D}_k)$ is nuclear) follows as in \cite[Theorem~44.1]{trevesbook}. Using this isomorphism, 
interpreting  $\alpha_k |_{\partial D_k\times \widehat{D}_k}$
 as a Banach-valued holomorphic
function on $\widehat{D}_k$, we see that it can be extended to a Banach-valued distribution on $\widehat{\Mm}_k$.
Therefore, by Proposition~\ref{prop-extension}, we have $\alpha_k\in \mathcal{A}^{-\infty}(\widehat{D}_k,\mathcal{C}^{-K}(\partial D_k))$.
 Since each distribution on the compact manifold $\partial D_k$ is of finite order, we see that 
\[ \alpha_k|_{\partial D_k\times \widehat{D}_k}\in  \Dd'_0(\partial D_k)\csor \mathcal{A}^{-\infty}(\widehat{D}_k),\]
which proves \eqref{eq-alphajstruct}. \end{proof}

\subsection{The space $\mathcal{Y}^{0,1}_\Omega(\Mm)$}\label{sec-y01}We will now state precisely the third condition
in the definition of the space $\mathcal{Y}^{0,1}_\Omega(\Mm)$. Let $\Mm$ and $\Omega$ be as above 
a product manifold and a product domain as in \eqref{eq-mproduct} and \eqref{eq-omegaproduct}. Suppose that a current $\gamma\in \Dd'_{0,1}(\Mm)$ satisfies the Weinstock condition and has facewise structure \eqref{eq-gammafacewise}, both with respect to $\Omega$. Then by Proposition~\ref{prop-alpha1alpha2} the relation \eqref{eq-alphajstruct} holds.
The third condition in the definition of $\mathcal{Y}^{0,1}_\Omega(\Mm)$ is the following: for $k=1,\dots, N$, we have
\begin{equation}
(\id_k\csor \ce_{\widehat{k}}) \left( \alpha_k|_{\partial D_k\times \widehat{D}_k}\right)
=\alpha_k, \label{eq-canonicality}
\end{equation}
where $\id_k$  is the identity map on $\Dd'_0(\partial D_k)$ and $\ce_{\widehat{k}}:\mathcal{A}^{-\infty}(\widehat{D}_k)\to \Dd'_0(\widehat{\Mm}_k)$ is the canonical extension operator. We refer to \eqref{eq-canonicality} as the {\it canonicality 
condition on face distributions.} We note that the condition \eqref{eq-canonicality} may be directly expressed in terms of the 
current $\gamma$ as:
\begin{equation}
(\id_k\csor \ce_{\widehat{k}}) \left( \gamma_k|_{\Mm_k\times \widehat{D}_k}\right)=\gamma_k,\label{eq-canonicality2}
\end{equation}
where $\gamma_k$ is the $k$-th component of $\gamma$ in the standard decomposition \eqref{eq-01dec2}, $\id_k$  now denotes the identity map on $\Dd'_{0,1}(\Mm_k)$ and $\ce_{\widehat{k}}$ is as in \eqref{eq-canonicality}. Also
note that \eqref{eq-canonicality2} makes sense thanks to \eqref{eq-gammakstruct}. To prove \eqref{eq-canonicality2}, we have
\begin{align*}
(\id_k\csor \ce_{\widehat{k}})\left( \gamma_k|_{\Mm_k\times \widehat{D}_k}\right)&=
(\id_k\csor \ce_{\widehat{k}})\left((\jmath^k\csor \id_{\widehat{k}})(\alpha_k|_{\partial D_k\times \widehat{D}_k}) \right)^{0,1}\\
&=\left( (\jmath^k_* \csor \ce_{\widehat{k}})(\id_k \csor \ce_{\widehat{k}})\left(\alpha_k|_{\partial D_k\times \widehat{D}_k}\right)\right)^{0,1}\\
&=\left(\iota_*^k(\alpha_k)\right)^{0,1} &\text{ using \eqref{eq-canonicality}}\\
&=\gamma_k,
\end{align*}
where in the second line, $\id_k$ denotes the identity operator on $\partial D_k$.  The converse implication, i.e., that \eqref{eq-canonicality2} implies \eqref{eq-canonicality}, can be proved by an analogous computation.

Therefore,  $\mathcal{Y}^{0,1}_\Omega(\Mm)$ consists of those $(0,1)$-currents 
in $\Dd'_{0,1}(\Mm)$ which satisfy the Weinstock criterion, have facewise structure 
(both with respect to $\Omega$), and whose face distributions are canonical in the sense 
of \eqref{eq-canonicality} or, equivalently, \eqref{eq-canonicality2}. Since all three 
conditions are closed, it follows that $\mathcal{Y}^{0,1}_\Omega(\Mm)$ is a closed 
subspace of $\Dd'_{0,1}(\Mm)$. The next proposition shows that in Theorem~\ref{thm-product} 
we have identified the correct target space.

\begin{prop}\label{prop-y01target} Let $\Omega\Subset\Mm$ be a product domain as in \eqref{eq-mproduct}, \eqref{eq-omegaproduct}. If $f\in \mathcal{A}^{-\infty}(\Omega)$, then $\bc f \in \mathcal{Y}^{0,1}_\Omega(\Mm)$.
\end{prop}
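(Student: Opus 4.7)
Conditions~(1) and~(2) in the definition of $\mathcal{Y}^{0,1}_\Omega(\Mm)$ come essentially for free: Weinstock orthogonality is Proposition~\ref{prop-weinstock}, facewise structure together with the support condition $\supp\alpha_k\subset F_k$ is Proposition~\ref{prop-bcfacewise}, and Proposition~\ref{prop-alpha1alpha2} guarantees that $\alpha_k|_{\partial D_k\times\widehat{D}_k}\in\Dd'_0(\partial D_k)\csor\mathcal{A}^{-\infty}(\widehat{D}_k)$, so that the left-hand side of \eqref{eq-canonicality} is well-defined in the first place. The entire content of the proposition is therefore the verification of the canonicality condition~(3).

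The plan is to deduce canonicality in its equivalent form \eqref{eq-canonicality2} from a ``Fubini'' identity for the canonical extension on a product, namely
\[ \ce f \;=\; (\id_k\csor\ce_{\widehat{k}})\circ(\ce_k\csor\id_{\widehat{k}})(f),\]
where $f$ is reinterpreted as a Banach-valued object in $\mathcal{A}^{-\infty}\bigl(D_k,\mathcal{A}^{-\infty}(\widehat{D}_k)\bigr)$. The polynomial-growth hypothesis on $f$ together with the product identity $\dist((z,w),\partial\Omega)=\min(\dist(z,\partial D_k),\dist(w,\partial\widehat{D}_k))$ yields $\norm{f(z,\cdot)}_{\mathcal{A}^{-N}(\widehat{D}_k)}\leq C\dist(z,\partial D_k)^{-N}$, legitimizing this reinterpretation in the Banach-valued framework of Section~\ref{sec-ainfty}. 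Granting the identity, I would apply $-\dbar_{\Mm_k}$ to both sides: since $\dbar_{\Mm_k}=\dbar\csor\id$ acts only in the $\Mm_k$-slot whereas $\id_k\csor\ce_{\widehat{k}}$ acts only in the $\widehat{k}$-slot, the two commute, and recalling that $\gamma_k=-\dbar_{\Mm_k}\ce f$ by \eqref{eq-bcdef}, \eqref{eq-dbarprod}, and the uniqueness of \eqref{eq-01dec2}, I obtain
\[ \gamma_k= (\id_k\csor\ce_{\widehat{k}})\bigl(-\dbar_{\Mm_k}(\ce_k\csor\id_{\widehat{k}})(f)\bigr). \]
The current inside the outer parentheses is supported in $\partial D_k\times\widehat{D}_k$ and, by the Fubini identity restricted to $\Mm_k\times\widehat{D}_k$, coincides there with $\gamma_k|_{\Mm_k\times\widehat{D}_k}$. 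This equation is exactly \eqref{eq-canonicality2}.

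To prove the Fubini identity I plan to invoke Barrett's explicit representation~\eqref{eq-cedef}. On a product domain one may choose defining functions $r_j=r_{D_j}\circ\pi_j$ depending only on the $\Mm_j$-coordinate, together with transverse $(0,1)$ vector fields $T_j$ pointing in the $\Mm_j$-direction, so that $T_jr_\ell=\delta_{j\ell}$; with this choice the operators $(T_j^*)^{s_j}$ acting on distinct slots commute, and a direct application of Fubini on $\int_\Omega=\int_{D_k}\int_{\widehat{D}_k}$ to \eqref{eq-cedef} factors it as the iterated integral defining the composition $(\id_k\csor\ce_{\widehat{k}})\circ(\ce_k\csor\id_{\widehat{k}})(f)$. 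The main obstacle I anticipate is handling the partition of unity $\{\chi_S\}$ appearing in~\eqref{eq-cedef}, which need not factor across the product; however, the invariance of the Barrett formula under the choice of tuple $(s_1,\dots,s_N)$ and under refinements of the partition (both consequences of the uniqueness asserted by Theorem~\ref{thm-ce}) lets one reduce to a product partition, after which the factorization is an essentially algebraic identity proved by repeated integration by parts in the style of~\cite{barrettduality}.
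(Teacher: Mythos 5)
Your reduction of the proposition to the canonicality condition \eqref{eq-canonicality2}, and your derivation of \eqref{eq-canonicality2} from the factorization $\ce f=(\id_k\csor\ce_{\widehat{k}})\circ(\ce_k\csor\id_{\widehat{k}})f$ by applying $-\dbar_{\Mm_k}$ and restricting to $\Mm_k\times\widehat{D}_k$, are sound, but this is a genuinely different route from the paper's. The paper argues locally and softly: near a point of $\Mm_k\times\partial\widehat{D}_k$ it writes $\gamma_k$ as the strong limit of the currents $\gamma_k^{\epsilon_\nu}=\iota^k_*\left(f_{\epsilon_\nu}[F_k\cap U]\right)^{0,1}$ coming from the translates $f_{\epsilon_\nu}=f(\cdot-\epsilon_\nu v)$, which are smooth up to the boundary; for such data the canonical extension is just extension by zero (part (3) of Theorem~\ref{thm-ce}), so the local identity \eqref{eq-localcanonicality} holds for each $\nu$ and passes to the limit by continuity of $\ce_{\widehat{k}}$, using that weak-* convergence upgrades to strong convergence in the dual of a Montel space. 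Your route instead proves a global ``Fubini'' identity for $\ce$ on the product, which is stronger and in fact anticipates the paper's Lemma~\ref{lem-bcainfty}; two caveats are worth recording. First, the paper establishes that factorization only on the subspace $\widetilde{\mathcal{A}}^{-\infty}(\Omega)$, and the equality $\widetilde{\mathcal{A}}^{-\infty}(\Omega)=\mathcal{A}^{-\infty}(\Omega)$ comes only after Theorem~\ref{thm-product}, so it cannot be quoted here; your vector-valued reinterpretation of $f$ correctly sidesteps this circularity, but it obliges you to supply a Banach-valued version of Theorem~\ref{thm-ce} (the paper's $\ce$ is scalar) together with the holomorphy of $z\mapsto f(z,\cdot)$ as an $\mathcal{A}^{-N}(\widehat{D}_k)$-valued map --- both routine via the same Barrett formula and standard weak-holomorphy arguments, but not available off the shelf in the paper. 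Second, the factorization of \eqref{eq-cedef} with product-adapted data (defining functions pulled back from the factors, fields $T_j$ along the factors, a product partition of unity, testing on product forms and concluding by density) is a genuine computation that you only sketch; your appeal to the uniqueness statement of Theorem~\ref{thm-ce} to legitimize the product-adapted choices is the right fix for the partition-of-unity issue. In exchange for this extra machinery your argument yields the stronger identity $\ce=(\id_k\csor\ce_{\widehat{k}})\circ(\ce_k\csor\id_{\widehat{k}})$ on all of $\mathcal{A}^{-\infty}(\Omega)$, whereas the paper's approximation-by-translates argument obtains exactly the canonicality condition with far less apparatus.
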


\begin{proof}In view of Propositions~\ref{prop-weinstock} and~\ref{prop-bcfacewise}, we only need to 
prove \eqref{eq-canonicality}, which we do in the equivalent form \eqref{eq-canonicality2}. We first note that 
the statement \eqref{eq-canonicality2} is local in the following sense: to prove \eqref{eq-canonicality2} it suffices
to give an open cover $\mathcal{W}$ of $\Mm_k$ and another open cover $\widehat{\mathcal{{W}}}$ of 
$\partial \widehat{D}_k$ by open sets of $\widehat{\Mm}_k$ such that for each $W\in \mathcal{W}$ and 
$\widehat{W}\in\widehat{\mathcal{W}}$, we have
\begin{equation}\label{eq-localcanonicality}
(\id_k\csor \ce_{\widehat{k}})\left( \gamma_k|_{W\times(\widehat{D}_k\cap \widehat{W})}\right)=\gamma_k|_{W\times \widehat{W}},
\end{equation}
where $\id_k$ is now the identity operator on $\Dd'_{0,1}(W)$ and $\ce_{\widehat{k}}$ denotes the canonical extension 
operator from $\mathcal{A}^{-\infty}(\widehat{D}_k\cap \widehat{W}))$ to $\Dd'_0(\widehat{W})$. 
For any point $p\in \Mm_k\times \partial\widehat{D}_k$, we can find a neighborhood $U$ of $p$ in $\Mm$ such that there 
is a vector $v$ as in Theorem~\ref{thm-bcexistence} pointing outward from $\Omega$ along each $\partial\Omega_j$, so that if we define $f_\epsilon=f(\cdot-\epsilon v)$, then \eqref{eq-bc}, \eqref{eq-ce} and \eqref{eq-bcfcomp}  hold. Note 
further that by shrinking $U$ around $p$, we may assume that $U=W\times \widehat{W}$, where $W$ is an open set in $\Mm_k$  and $\widehat{W}$ is an open set of $\widehat{\Mm}_k$. 

Now from \eqref{eq-bcfcomp} we conclude that for each $\phi\in \Dd^{n,n-1}(U)$, we have
\[ \pair{\gamma_k|_U}{\phi}= \lim_{\epsilon\downarrow 0}\pair{\gamma_k^\epsilon}{\phi},\]
where $\gamma_k$ has the same meaning as above, and $\gamma_k^\epsilon=\iota^k_*\left(f_\epsilon[F_k\cap U]\right)^{0,1}$. Let $\{\epsilon_\nu\}$ be a sequence of positive real numbers which converge to the limit 0 as $\nu\to \infty$. Recalling that in $\Dd'_{0,1}(U)$ (or more generally in the dual of a Montel space, see \cite[Section~34.4]{trevesbook}) a weak-* convergent sequence is also  convergent in the usual strong topology, we have 
\begin{equation}\label{eq-gammaepnu}
  \gamma_k|_U = \lim_{\nu\to \infty} \gamma_k^{\epsilon_\nu}.
\end{equation}
Note now that for each $\nu$, we have
\[ \gamma_k^{\epsilon_\nu}|_{W \times (W\cap \widehat{D}_k)}\in \Dd'_{0,1}(W)\csor \mathcal{A}^\infty(\widehat{W}\cap \widehat{D}_k),\]
where $\mathcal{A}^\infty(\widehat{W}\cap \widehat{D}_k)$ denotes the space of functions which are holomorphic 
on $\widehat{W}\cap \widehat{D}_k\subset \widehat{\Mm}_k$ and extend smoothly to the boundary. This follows from the fact that for $\epsilon>0$, we have $\gamma_k^\epsilon = \iota^k_*\left(f_\epsilon[F_k\cap U]\right)^{0,1}$, and $f_\epsilon$ is $\mathcal{C}^\infty$-smooth on $\partial\Omega\cap U$. Therefore we have
\[
(\id_k\csor \ce_{\widehat{k}})\left( \gamma_k^{\epsilon_\nu}|_{W\times(\widehat{D}_k\cap \widehat{W})}\right)=\gamma_k^{\epsilon_\nu}|_{W\times \widehat{W}},
\]
since for functions continuous up to the boundary, the canonical extension is precisely the extension by 0 (see the proof of Theorem~\ref{thm-ce}). We now let $\nu\to\infty$, use \eqref{eq-gammaepnu} and the continuity of $\id_k$ and 
$\ce_{\widehat{k}}$ to conclude that \eqref{eq-localcanonicality} holds.
\end{proof}

\section{Holomorphic extension of currents in $\mathcal{Y}^{0,1}_\Omega(\Mm)$}
\subsection{The structure of the direct summands}\label{sec-gammakstruct} Let $\gamma\in \mathcal{Y}^{0,1}_\Omega(\Mm)$. Using the decomposition \eqref{eq-01dec}, we write $\gamma=\sum_{k=1}^N \gamma_k$,
where $\gamma_k\in \mathcal{D}'_{0,1}(\Mm_k)\csor \mathcal{D}'_{0}(\widehat{\Mm}_k)$. 
First we note the following fact:
\begin{prop} For each $k=1,\dots, N$:
\begin{equation}\label{eq-gammakclaim}
   \gamma_k|_{\Mm_k\times \widehat{D}_k}\in   \mathcal{X}^{0,1}_{D_k}(\Mm_k)\csor \mathcal{A}^{-\infty}(\widehat{D}_k).
\end{equation}
\end{prop}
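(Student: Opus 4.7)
The plan is to leverage what is already known — by the equivalent form \eqref{eq-gammakstruct} of \eqref{eq-alphajstruct}, we have $\gamma_k|_{\Mm_k\times\widehat{D}_k}\in\Dd'_{0,1}(\Mm_k)\csor\mathcal{A}^{-\infty}(\widehat{D}_k)$ — and to upgrade $\Dd'_{0,1}(\Mm_k)$ to the closed subspace $\mathcal{X}^{0,1}_{D_k}(\Mm_k)$. Viewing the restriction as a holomorphic map $F:\widehat{D}_k\to\Dd'_{0,1}(\Mm_k)$ of polynomial growth, the goal is to verify $F(w)\in\mathcal{X}^{0,1}_{D_k}(\Mm_k)$ for every $w$, which amounts to producing a face distribution on $\partial D_k$ and the Weinstock condition with respect to $D_k$ at each fiber.

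For the face-distribution part I would simply restrict the facewise formula. Writing $\gamma_k=\iota^k_*(\alpha_k)^{0,1}=(\jmath^k_*\csor\id)(\alpha_k)^{0,1}$ and restricting to $\Mm_k\times\widehat{D}_k$ yields $F=(\jmath^k_*\csor\id)(\alpha_k|_{\partial D_k\times\widehat{D}_k})^{0,1}$. By Proposition~\ref{prop-alpha1alpha2}, $\alpha_k|_{\partial D_k\times\widehat{D}_k}\in\Dd'_0(\partial D_k)\csor\mathcal{A}^{-\infty}(\widehat{D}_k)$, so evaluation at $w\in\widehat{D}_k$ produces a current of the form $\jmath^k_*(\beta(w))^{0,1}$ with $\beta(w)\in\Dd'_0(\partial D_k)$, supplying the needed face distribution.

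The Weinstock condition is the substantive part. I would test the global Weinstock hypothesis on $\gamma$ against separable forms $\omega=\phi\csor\widehat{\phi}$, with $\phi\in\Dd^{n_k,n_k-1}(\Mm_k)$ satisfying $\dbar\phi=0$ on $\overline{D_k}$ and $\widehat{\phi}\in\Dd^{\hat{n}_k,\hat{n}_k}(\widehat{\Mm}_k)$ supported in $\widehat{D}_k$, where $\hat{n}_k=n-n_k$. Since $\widehat{\phi}$ is of top bidegree on $\widehat{\Mm}_k$, one has $\dbar\widehat{\phi}=0$, so $\dbar\omega=\dbar\phi\csor\widehat{\phi}$ vanishes on $\overline{\Omega}=\overline{D_k}\times\overline{\widehat{D}_k}$, and hence $\pair{\gamma}{\omega}=0$. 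A bidegree inspection of \eqref{eq-01dec2} shows that for $j\ne k$ the summand $\gamma_j$ pairs to zero with $\omega$ (it requires a factor of bidegree $(n_j,n_j-1)$ on $\Mm_j$, but the only non-top-bidegree tensor-slot of $\omega$ sits on $\Mm_k$), so $\pair{\gamma_k}{\phi\csor\widehat{\phi}}=0$. Re-interpreted, the element $w\mapsto\pair{F(w)}{\phi}\in\mathcal{A}^{-\infty}(\widehat{D}_k)$ annihilates every compactly supported top-bidegree test form on $\widehat{D}_k$, hence vanishes as a distribution; being holomorphic, it vanishes at every $w$, yielding the Weinstock condition for $F(w)$.

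Combining these, $F(w)\in\mathcal{X}^{0,1}_{D_k}(\Mm_k)$ for every $w\in\widehat{D}_k$. Since $\mathcal{X}^{0,1}_{D_k}(\Mm_k)$ is closed in $\Dd'_{0,1}(\Mm_k)$, the nuclearity-based identification of vector-valued holomorphic function spaces with the appropriate completed tensor products (in the spirit of the argument with \cite[Theorem~44.1]{trevesbook} used in the proof of Proposition~\ref{prop-alpha1alpha2}) realizes $\mathcal{X}^{0,1}_{D_k}(\Mm_k)\csor\mathcal{A}^{-\infty}(\widehat{D}_k)$ as the closed subspace of holomorphic $\mathcal{X}^{0,1}_{D_k}(\Mm_k)$-valued functions inside $\Dd'_{0,1}(\Mm_k)\csor\mathcal{A}^{-\infty}(\widehat{D}_k)$, giving \eqref{eq-gammakclaim}. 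I expect the main obstacle to be precisely this final step — rigorously converting "fiberwise membership in a closed subspace" into "membership in the corresponding completed tensor product" — even though all the necessary functional-analytic ingredients have already been assembled in earlier sections.
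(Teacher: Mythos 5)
Your proposal is correct and follows essentially the same route as the paper: the Weinstock part is verified exactly as there, by pairing $\gamma$ with split forms $\omega\tensor\widehat{\phi}$ with $\dbar\omega=0$ on $\overline{D_k}$ and using bidegree to discard the summands $\gamma_j$, $j\neq k$, while the face structure comes from the representation $\gamma_k=(\jmath^k_*\csor\id)(\alpha_k)^{0,1}$ together with Proposition~\ref{prop-alpha1alpha2}. The only difference is packaging: the paper states the two conditions as orthogonality relations (via Lemma~\ref{lem-facestruct}) characterizing membership in $\mathcal{X}^{0,1}_{D_k}(\Mm_k)\csor\Dd'_0(\widehat{\Mm}_k)$ and only afterwards restricts to $\Mm_k\times\widehat{D}_k$ to invoke \eqref{eq-alphajstruct}, rather than evaluating fiberwise and re-assembling, so the closed-subspace/tensor-product step you flag as the main obstacle is treated there at essentially the same level of detail you would need.
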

\begin{proof}Let $n_k=\dim_{\cx}\Mm_k$, and $\hat{n}_k=\dim_{\cx}\widehat{\Mm}_k$. We first show that $\gamma_k\in \mathcal{X}^{0,1}_{D_k}(\Mm_k)\csor \Dd'_0(\widehat{\Mm}_k)$. From the definition of $\mathcal{X}^{0,1}_{D_k}(\Mm_k)$ (see \eqref{eq-weinstock} and \eqref{eq-facedistn}) combined with Lemma~\ref{lem-facestruct}, we see that a current $\theta\in \Dd'_{0,1}(\Mm_k)\csor \Dd'_0(\widehat{\Mm}_k)$ 
is  in  $\mathcal{X}^{0,1}_{D_k}(\Mm_k)\csor \Dd'_0(\widehat{\Mm}_k)$ provided the following two conditions are satisfied:
 \begin{enumerate}
 \item[(i)] for each $\omega\in \mathcal{D}^{n_k,n_k-1}(\Mm_k)$ such that $\dbar \omega=0$ on $D_k$,
 and for each  $\phi\in \Dd^{\hat{n}_k,\hat{n}_k}(\widehat{\Mm}_k)$ we have $\pair{\theta}{ \omega\tensor\phi}=0$;
\item[(ii)] denoting as usual by $\jmath^k$ the inclusion of $\partial D_k$ in $\Mm_k$, for each $\lambda\in  \mathcal{D}^{n_k,n_k-1}(\Mm_k)$ such that $(\jmath^k)^*(\lambda)=0$, and for each $\phi\in  \Dd^{\hat{n}_k,\hat{n}_k}(\widehat{\Mm}_k)$ we have $\pair{\theta}{\lambda\tensor \phi}=0$.
\end{enumerate}

Note that in the algebraic tensor products $\omega\tensor \phi$ and $\lambda\tensor\phi$ above, 
the factors have to be reordered.
To verify (i),  note that if $j\not=k$, we have $\pair{\gamma_j}{\omega\tensor\phi}=0$,  since $\omega\tensor \phi \in \Dd^{n_k, n_k-1}(\Mm_k)\csor\Dd^{\hat{n}_k,\hat{n}_k}(\widehat{\Mm}_k)$, whereas $\gamma_j\in \Dd'_{0,1}(\Mm_j)\csor \Dd'_{0,0}(\widehat{\Mm}_j)$. If we set
$\hat{\gamma}_k= \sum_{j\not = k} \gamma_j$, then we have $\pair{\hat{\gamma}_k}{\omega\tensor \phi}=0$.  Now since $\dbar \omega=0$ on  $D_k$,
we see that $\dbar(\omega\tensor \phi)$ vanishes on $D_k\times \widehat{\Mm}_k$, and therefore vanishes 
a fortiori on $\Omega=D_k\times \widehat{D}_k$. Since
$\gamma\in \mathcal{X}^{0,1}_\Omega(\Mm)$, we have therefore $\pair{\gamma}{\omega\tensor\phi}=0$. We therefore have
\begin{align*}\pair{\gamma_k}{\omega\tensor\phi}&= \pair{\gamma-\hat{\gamma}_k}{\omega\tensor\phi}\\
&=\pair{\gamma}{\omega\tensor\phi}- \pair{\hat{\gamma}_k}{\omega\tensor\phi}\\
&=0.
\end{align*}

For (ii), we use the representation \eqref{eq-gammakfacewise} of $\gamma_k$. We then have
\begin{align*}
\pair{\gamma_k}{\lambda\tensor \phi}&= \pair{(\jmath^k_*\csor \id)(\alpha_k)^{0,1}}{\lambda\tensor\phi}\\
&=\pair{\alpha_k}{(\jmath^k)^*\csor \id)(\lambda\tensor\phi)}\\
&= \pair{\alpha_k}{(\jmath^k)^*\lambda\tensor\phi)}\\
&=\pair{\alpha_k}{0}\\
&=0.
\end{align*}
Therefore, it follows that $\gamma_k\in \mathcal{X}^{0,1}_{D_k}(\Mm_k)\csor \Dd'_0(\widehat{\Mm}_k)$.  Now, 
we can write $\gamma_k|_{\Mm_k\times \widehat{D}_k}= \iota^k_*(\alpha_k|_{\partial D_k\times \widehat{D}_k})^{0,1}= (\jmath^k_*\csor \id)\left(\alpha_k|_{\partial D_k\times \widehat{D}_k}\right)^{0,1}$.
Using \eqref{eq-alphajstruct}, the  result \eqref{eq-gammakclaim} now follows.
   \end{proof}
\subsection{The space $\widetilde{\mathcal{A}}^{-\infty}(\Omega)$}   We begin by noting some simple properties of the space of holomorphic functions of polynomial growth:
   \begin{prop}\label{prop-ainftyproperties} Let  $D$ be a domain with generic corners  in the complex manifold $\Mm$. Then
\begin{enumerate}
\item[(a)] the canonical  extension map 
\[\ce: \mathcal{A}^{-\infty}(D)\rightarrow \Dd'_0(\Mm)\]
is an isomorphism (of TVS) onto the image  $\ce(\mathcal{A}^{-\infty}(D))$, equipped with the subspace topology from $\Dd'_0(\Mm)$.
\item[(b)] The space $\mathcal{A}^{-\infty}(D)$ is nuclear. Consequently, there is a naturally defined topological  tensor 
product $\mathcal{A}^{-\infty}(D)\csor \Xx$ with any locally convex topological vector space $\Xx$, which can be naturally identified with a closed subspace of $\Dd'_0(\Mm, \Xx)$, the space of $\Xx$-valued distributions.
\item[(c)] if $U$ is a relatively compact open subset in a complex manifold $\Nn$, then
\[ \mathcal{A}^{-\infty}(D)\csor \mathcal{A}^{-\infty}(U)\subseteq \mathcal{A}^{-\infty}(D\times U).\]
\end{enumerate}
\end{prop}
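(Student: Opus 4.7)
Continuity of $\ce$ and injectivity (since $(\ce f)|_D = f$) are immediate from Theorem~\ref{thm-ce}. To upgrade this to a topological isomorphism onto the image endowed with the subspace topology from $\Dd'_0(\Mm)$, I would invoke Proposition~\ref{res-dfsinverse}: the source is DFS by Proposition~\ref{prop-ainftydfs}, and one equips the image with a DFS structure which agrees with the subspace topology. Define $\mathcal{B}^{-k}(D)$ as the Banach space of distributions on $\Mm$ of order at most $k$, supported in $\overline{D}$, whose restriction to $D$ is holomorphic. The formula \eqref{eq-cedef} shows that $\ce: \mathcal{A}^{-k}(D) \to \mathcal{B}^{-k'}(D)$ is bounded for some $k' = k'(k)$, while the Bell-type argument in the proof of Proposition~\ref{prop-extension} shows that restriction $\mathcal{B}^{-k}(D) \to \mathcal{A}^{-(k+2n)}(D)$ is bounded. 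These mutually compatible continuous maps of inductive sequences identify $\ce(\mathcal{A}^{-\infty}(D))$ with $\bigcup_k \mathcal{B}^{-k}(D)$ as topological vector spaces. Finally, since bounded subsets of $\Dd'_0(\Mm)$ with support in the fixed compact set $\overline{D}$ have uniformly bounded order, the subspace topology on the image agrees with this inductive limit topology, and Proposition~\ref{res-dfsinverse} closes the argument.

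\textbf{Part (b).} Nuclearity of the DFS space $\mathcal{A}^{-\infty}(D)$ reduces to showing that for each $k$ there is an $m > k$ such that the linking map $\mathcal{A}^{-k}(D) \hookrightarrow \mathcal{A}^{-m}(D)$ is nuclear, a strengthening of the compactness established in Proposition~\ref{prop-ainftydfs}. I would obtain this via a reproducing-kernel argument: represent $f \in \mathcal{A}^{-k}(D)$ by a Cauchy--Fantappi\`e or Bergman-type integral against a smooth kernel, expand the kernel in a Schauder basis on an auxiliary enlarged domain, and read off a summable rank-one decomposition of the embedding. Once nuclearity is established, $\mathcal{A}^{-\infty}(D) \csor \Xx$ is unambiguously defined (projective and injective completed tensor products coincide). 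The embedding into $\Dd'_0(\Mm, \Xx)$ as a closed subspace follows by tensoring the closed embedding $\ce$ from part (a) with the identity on $\Xx$, invoking the standard stability of closed embeddings under completed tensor products with a nuclear factor.

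\textbf{Part (c).} This is a direct growth estimate. For $f \in \mathcal{A}^{-k}(D)$ and $g \in \mathcal{A}^{-\ell}(U)$, the function $(f \otimes g)(z,w) = f(z)g(w)$ is holomorphic on $D \times U$, and
\[
|f(z)g(w)| \leq \frac{C}{\dist(z,\partial D)^k\, \dist(w,\partial U)^\ell} \leq \frac{C'}{\dist((z,w),\partial(D\times U))^{k+\ell}},
\]
where the second inequality uses that, for any Riemannian metric compatible with the product structure, $\dist((z,w),\partial(D\times U))$ is controlled above by a constant multiple of $\min\{\dist(z,\partial D),\dist(w,\partial U)\}$. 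The resulting jointly continuous bilinear map $\mathcal{A}^{-k}(D) \times \mathcal{A}^{-\ell}(U) \to \mathcal{A}^{-(k+\ell)}(D \times U)$ factors through the algebraic tensor product and extends continuously, using nuclearity from part (b), to $\mathcal{A}^{-\infty}(D) \csor \mathcal{A}^{-\infty}(U) \to \mathcal{A}^{-\infty}(D \times U)$.

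\textbf{Main obstacle.} The delicate step is the nuclearity claim in (b): the downstream tensor-product machinery needs it, yet it goes beyond the mere compactness of transition maps already established in Proposition~\ref{prop-ainftydfs}. Producing explicit summable rank-one decompositions of the embedding $\mathcal{A}^{-k} \hookrightarrow \mathcal{A}^{-m}$ for domains with generic corners is geometry-sensitive and requires care with the reproducing kernel near the corner strata; by contrast, (a) and (c) are essentially bookkeeping exercises with Bell's estimate and product-metric comparisons.
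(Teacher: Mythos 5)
Part (c) of your proposal is essentially correct and is in substance the paper's argument (the paper gets the same conclusion by writing $f\tensor g=(f\tensor\mathbf{1})\cdot(\mathbf{1}\tensor g)$, which encodes exactly your comparison of $\dist((z,w),\partial(D\times U))$ with the factor distances), and your overall strategy for (a) --- show the image is DFS and invoke Proposition~\ref{res-dfsinverse} --- is also the paper's. But your execution of (a) contains a false step: $\ce(\mathcal{A}^{-\infty}(D))$ is \emph{not} $\bigcup_k\mathcal{B}^{-k}(D)$. Any nonzero distribution supported on $\partial D$ (surface measure, say) lies in some $\mathcal{B}^{-k}(D)$, since its restriction to $D$ is $0$ and hence holomorphic, but it is not a canonical extension (if it were $\ce f$, then $f=0$ and it would vanish). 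Your two maps compose to the identity in only one order: restriction $\circ\,\ce=\id$, while $\ce\circ$ restriction is a proper idempotent on $\bigcup_k\mathcal{B}^{-k}(D)$ whose kernel consists of distributions supported on $\partial D$; so the two inductive systems are not equivalent, and the image of $\ce$ is at best a complemented subspace of the union, not the union. Even after repairing this (the range of the continuous idempotent is a closed, hence DFS, subspace of the inductive limit), your final claim that the subspace topology from $\Dd'_0(\Mm)$ agrees with that inductive-limit topology is supported only by a remark about bounded sets, which does not identify the topologies. The paper avoids all of this with a different device: since $\ce f$ annihilates test forms vanishing on $D$, it acts on the Fr\'echet--Schwartz space $\mathcal{E}^{n,n}(\overline{D})$, and the image is exhibited as a closed subspace of its strong dual, hence DFS, after which Proposition~\ref{res-dfsinverse} applies.

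The more serious gap is (b). The nuclearity of $\mathcal{A}^{-\infty}(D)$, which you flag as the main obstacle, is left as an unexecuted program: constructing Cauchy--Fantappi\`e or Bergman-type kernels with summable rank-one decompositions of the linking maps on a domain with generic corners in an arbitrary complex manifold is a substantial task and nothing in the paper's toolkit provides it. It is also unnecessary: nuclearity is inherited by arbitrary subspaces with the induced topology, and part (a) identifies $\mathcal{A}^{-\infty}(D)$, as a topological vector space, with the subspace $\ce(\mathcal{A}^{-\infty}(D))$ of the nuclear space $\Dd'_0(\Mm)$; this one line is precisely the paper's proof of (b), and it is what legitimizes your own appeal to ``nuclearity from part (b)'' in (c). So the architecture (a) $\Rightarrow$ (b) $\Rightarrow$ tensor-product statements is the right one, but as written (a) rests on an identification that is false and (b) on a missing, avoidable argument.
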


\begin{proof} Since $\ce$ is obviously injective. to prove (a), it suffices to show that $\ce(\mathcal{A}^{-\infty}(D))$ with its subspace topology is a DFS space. Then the result would follow from Prop~\ref{res-dfsinverse}. We claim that each element $\ce f$ of $\ce(\mathcal{A}^{-\infty}(D))$ induces a linear functional on $\mathcal{E}^{n,n}(\overline{D})$, the space of 
top degree forms smooth up to the boundary on $D$. If $\phi\in \mathcal{E}^{n,n}(\overline{D})$, and $\tilde{\phi}$ 
is any extension of $\phi$ to an element of $\Dd^{n,n}(\Mm)$, then we define $\pair{\ce f}{\phi}=\pair{\ce f}{\tilde{\phi}}$, which is well-defined, since from the definition it is clear that $\ce f$ vanishes on any test form vanishing on $D$. This embeds $\ce(\mathcal{A}^{-\infty}(D))$ as a closed subspace of the strong dual of $\mathcal{E}^{n,n}(\overline{D})$. Since $\mathcal{E}^{n,n}(\overline{D})$ is a Fr\'{e}chet-Schwartz space, it follows that $\ce(\mathcal{A}^{-\infty}(D))$ is a DFS space. 

Assertion (b) now follows, since $\ce$ embeds $\mathcal{A}^{-\infty}(D)$ as a closed subspace of the nuclear space $\Dd'_0(\Mm)$.
This also allows us to view $\mathcal{A}^{-\infty}(D)\csor \Xx$ as the subspace $\ce(\mathcal{A}^{-\infty}(D))\csor\Xx$
of $\Dd'_0(\Mm)\csor \Xx$.

To see (c), note that it suffices to prove that the algebraic tensor product $\mathcal{A}^{-\infty}(D)\tensor \mathcal{A}^{-\infty}(U)$ is contained in $\mathcal{A}^{-\infty}(D\times U)$. Indeed, by linearity, it suffices to show that 
if $f\in \mathcal{A}^{-\infty}(D)$ and $g\in \mathcal{A}^{-\infty}(U)$, then $f\tensor g \in \mathcal{A}^{-\infty}(D\times U)$. Now $\mathcal{A}^{-\infty}(D\times U)$ is closed under multiplication (since on any domain $\Omega$, we clearly have for $F\in \mathcal{A}^{-k}(\Omega), G\in \mathcal{A}^{-\ell}(\Omega)$ that $FG\in \mathcal{A}^{-(k+\ell)}(\Omega)$.) Denoting by ${\bf 1}$ the constant function with value 1, we see that $f\tensor {\bf 1}$ and ${\bf 1}\tensor g$ belong to $\mathcal{A}^{-\infty}(D\times U)$, but we have $f\tensor g= (f\tensor {\bf 1})\cdot ({\bf 1}\tensor g)$. 
\end{proof}

\subsection{Proof of Theorem~\ref{thm-product}} Let $\Omega=D_1\times\dots\times D_N$ be a product domain in a product manifold as in \eqref{eq-omegaproduct}. We define:
   \[ \widetilde{\mathcal{A}}^{-\infty}(\Omega)= \mathcal{A}^{-\infty}(D_1)\csor \mathcal{A}^{-\infty}(D_2)\csor \dots
 \csor \mathcal{A}^{-\infty}(D_N),\]
 where the topological tensor products are well-defined thanks to Proposition~\ref{prop-ainftyproperties}. Then
 $ \widetilde{\mathcal{A}}^{-\infty}(\Omega)\subset  {\mathcal{A}}^{-\infty}(\Omega)$. Similarly, we denote
 by $ \widetilde{\mathcal{A}}^{-\infty}(\widehat{D}_k)$ the topological tensor product of  the $\mathcal{A}^{-\infty}(D_j)$'s with $j\not =k$. We will need the following properties of $\widetilde{\mathcal{A}}^{-\infty}(\Omega)$:
 
 \begin{lem}\label{lem-bcainfty}
   The map $\ce: \mathcal{A}^{-\infty}(\Omega)\to \Dd'_0(\Mm)$ 
 restricted to the subspace $\widetilde{\mathcal{A}}^{-\infty}(\Omega)$ admits the representation
 \begin{equation}\label{eq-cetildea}
 \ce|_{ \widetilde{\mathcal{A}}^{-\infty}(\Omega)}= \widehat{\otimes}_{k=1}^N \ce_k,
 \end{equation}
 where $\ce_k:\mathcal{A}^{-\infty}(D_k)\to \Dd'_0(\Mm_k)$ is the canonical extension map.
   Similarly, the restriction of 
 $\bc:  \mathcal{A}^{-\infty}(\Omega)\to \Dd'_{0,1}(\Mm)$ when  admits the representation 
 \begin{equation}\label{eq-bctildea}
   \bc|_{\widetilde{ \mathcal{A}}^{-\infty}(\Omega)} = \sum_{k=1}^N \bc_k \csor \ce_{\widehat{k}},
\end{equation}
 where $\bc_k: \mathcal{A}^{-\infty}(D_k)\to \Dd'_{0,1}(\Mm_k)$ is the boundary current map and $\ce_{\widehat{k}}:\widetilde{\mathcal{A}}^{-\infty}(\widehat{D}_k)\to \Dd'_0(\widehat{\Mm}_k)$ is restriction to $\widetilde{\mathcal{A}}^{-\infty}(\widehat{D}_k)$ of the canonical extension map.
 \end{lem}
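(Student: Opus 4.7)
The plan is to prove both identities first on the algebraic tensor product $\mathcal{A}^{-\infty}(D_1) \tensor \cdots \tensor \mathcal{A}^{-\infty}(D_N)$ and then extend to the completion $\widetilde{\mathcal{A}}^{-\infty}(\Omega)$ by continuity. Since all maps involved ($\ce$, $\bc$, $\ce_k$, $\bc_k$) are continuous between the relevant locally convex spaces (with $\ce_k$, $\bc_k$ continuous by Theorem~\ref{thm-ce} and \eqref{eq-bcdef}), and since the tensor products of such maps extend continuously to the topological tensor products (exploiting the nuclearity of $\mathcal{A}^{-\infty}(D_j)$ from Proposition~\ref{prop-ainftyproperties}(b)), the identities need only be checked on an elementary tensor $f = f_1 \otimes \cdots \otimes f_N$ with $f_k \in \mathcal{A}^{-\infty}(D_k)$.

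For \eqref{eq-cetildea}, I would use the limit characterization of $\ce$ from item~(2) of Theorem~\ref{thm-ce}. Working locally near a point $p \in \partial\Omega$, I pick local coordinates on $\Mm = \Mm_1 \times \cdots \times \Mm_N$ that respect the product structure, and choose a vector $v = (v_1, \ldots, v_N)$ pointing outward from $\Omega$ along each face (which is possible since outwardness decouples on a product domain). The shifted function then factors as $f_\epsilon = (f_1)_\epsilon \tensor \cdots \tensor (f_N)_\epsilon$, and for a test form supported in a product chart with $\phi = \phi_1 \tensor \cdots \tensor \phi_N$, Fubini yields
\[
\int_\Omega f_\epsilon \phi = \prod_{k=1}^N \int_{D_k} (f_k)_\epsilon \phi_k.
\]
Passing to the limit as $\epsilon \downarrow 0$ on each factor (using Theorem~\ref{thm-ce}(2) on each $D_k$) gives $\pair{\ce f}{\phi} = \prod_k \pair{\ce_k f_k}{\phi_k}$, which is precisely $\pair{\ce_1 f_1 \csor \cdots \csor \ce_N f_N}{\phi}$. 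A partition of unity argument on $\Mm$ (and bilinearity/continuity of $\csor$) extends the identity globally.

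For \eqref{eq-bctildea}, I would apply $\bc = -\dbar \circ \ce$ together with \eqref{eq-cetildea} just proved. The Leibniz rule for $\dbar$ on the tensor product of currents, together with the fact that each $\ce_k f_k$ is a $0$-current (so no signs appear from graded commutativity), gives
\[
\dbar(\ce_1 f_1 \csor \cdots \csor \ce_N f_N) = \sum_{k=1}^{N} \ce_1 f_1 \csor \cdots \csor \dbar \ce_k f_k \csor \cdots \csor \ce_N f_N,
\]
where the $k$-th factor sits in the appropriate slot. Since $\dbar \ce_k f_k = -\bc_k f_k$ by \eqref{eq-bcdef}, negating gives exactly $\sum_k \bc_k f_k \csor \ce_{\widehat k}(f_1 \tensor \cdots \widehat{f_k} \cdots \tensor f_N)$, which is $\bigl(\sum_k \bc_k \csor \ce_{\widehat k}\bigr)(f)$ upon reordering the tensor factors.

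The main obstacle is the technical verification that the tensor products of the operators $\ce_k$ and $\bc_k$ extend continuously to the completed tensor product $\widetilde{\mathcal{A}}^{-\infty}(\Omega)$ and target the right space. This requires invoking nuclearity of $\mathcal{A}^{-\infty}(D_k)$ and the identification of the topological tensor product with a closed subspace of $\Dd'_0(\Mm)$ (resp.\ $\Dd'_{0,1}(\Mm)$) afforded by Proposition~\ref{prop-ainftyproperties}(a)--(b). Once this continuity is in place, the agreement on the dense algebraic tensor product forces agreement on $\widetilde{\mathcal{A}}^{-\infty}(\Omega)$, completing the proof.
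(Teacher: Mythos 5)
Your proposal is correct and follows essentially the same route as the paper: reduce by linearity and density to elementary tensors $f_1\tensor\cdots\tensor f_N$, verify the identity for $\ce$ locally near the boundary using the $\epsilon$-shift by a product-adapted vector $v$, and then obtain \eqref{eq-bctildea} from $\bc=-\dbar\circ\ce$ via the Leibniz rule. The only (cosmetic) difference is in the local step: you pass to the limit in formula \eqref{eq-ce} directly via Fubini on product test forms, whereas the paper approximates by the boundary-smooth shifts $f_{\epsilon_\nu}$, applies part (3) of Theorem~\ref{thm-ce} (extension by zero for functions continuous up to the boundary), and then uses continuity of the canonical extension operators.
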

 
 \begin{proof} In order to establish \eqref{eq-cetildea} it suffices to show that 
 \begin{equation}\label{eq-cetensor}
\ce f = \ce_1 f_1\tensor \ce_2 f_2 \tensor \dots \tensor\ce_N f_N,
\end{equation}
 whenever $f_j\in \mathcal{A}^{-\infty}(\Omega_j)$ and  $f=f_1\tensor\dots \tensor f_N$ is their tensor product, which lies in $\mathcal{A}^{-\infty}(\Omega)$ by part (c) of Proposition~\ref{prop-ainftyproperties}. Once \eqref{eq-cetensor} is established, it follows by linearity that \eqref{eq-cetildea} holds on the algebraic tensor product
$\mathcal{A}^{-\infty}(D_1)\tensor\dots \tensor\mathcal{A}^{-\infty}(D_N)$, and then \eqref{eq-cetildea} follows by density.

Note that \eqref{eq-cetildea} is a local property, in the sense that to prove it, it suffices to show that each point in  the product manifold $\Mm=\Mm_1\times\dots \Mm_n$ has a neighborhood $W$ of the form $W_1\times\dots\times W_N$, with $W_j\subset \Mm_j$,  such that we have
 \begin{equation}\label{eq-cetensorlocal}
 \ce (f|_{\Omega\cap W}) = \ce_1 (f_1|_{D_1\cap W_1})\tensor  \dots \tensor\ce_N (f_N|_{D_N\cap W_N}),
 \end{equation}
where $\ce_j$ is now the canonical extension operator from $\mathcal{A}^{-\infty}(W_j\cap D_j)$ into $\Dd'_0(W_j)$, 
and $\ce$ is the  canonical extension operator from $\mathcal{A}^{-\infty}(W\cap D)$ into $\Dd'_0(W)$.
To prove \eqref{eq-cetensorlocal}, we use the method of proof in Proposition~\ref{prop-y01target}, i.e., represent  $f$ and $f_j$ locally as a limit of functions smooth up to the boundary. Note that in \eqref{eq-cetensorlocal}, the only interesting case is when $W$ is a neighborhood of a point on the boundary of $\Omega$. After shrinking $W$, we may assume that that there is a vector $v$ as in Theorem~\ref{thm-ce} which is transverse to each $\partial\Omega_j$ in $W$, and let 
$f_\epsilon= f(\cdot-\epsilon v)$ be also as in Theorem~\ref{thm-ce}. Choosing a sequence of positive numbers $\epsilon_\nu$ such that 
$\epsilon_\nu\to 0$ as $\nu\to \infty$, we see that $f_{\epsilon_\nu}$ is smooth up to the boundary on $\Omega\cap W$, and we have
\[ f_{\epsilon_\nu}= f_1^\nu\tensor\dots \tensor f_N^\nu,\]
where $f_j^\nu\in \mathcal{O}(W_j\cap D_j)$ is smooth up to the boundary as well, and as in the proof of Proposition~\ref{prop-y01target}, as $\nu\to \infty$, we have $f_j^\nu\to f_j$ and $f_{\epsilon_\nu}\to f$.
Since \eqref{eq-cetensorlocal} holds when $f$ is replaced by $f_{\epsilon_nu}$ and $f_j$ is replaced by $f_j^\nu$, and since  the maps $\ce_j$ and $\ce$ are continuous, taking a limit as $\nu\to \infty$, \eqref{eq-cetensorlocal} follows, and from which follows \eqref{eq-cetildea}.

We now prove \eqref{eq-bctildea} by a direct computation using the definition \eqref{eq-bcdef}. We obtain
\begin{align*}
 \bc|_{\widetilde{ \mathcal{A}}^{-\infty}(\Omega)}&= -\dbar \left(  \ce|_{\widetilde{ \mathcal{A}}^{-\infty}(\Omega)}\right)\\
 &= -\dbar \left( \widehat{\otimes}_{k=1}^N \ce_k,\right)\\
 &= \sum_{k=1}^N  -(\dbar \circ\ce_k)\csor \ce_{\widehat{k}} \\
&=  \sum_{k=1}^N \bc_k \csor \ce_{\widehat{k}},
\end{align*}
where in the last-but-one line we have used the Leibniz product rule, and the fact that $\ce_{\widehat{k}}$ on $\widetilde{\mathcal{A}}^{-\infty}(\widehat{D}_k)$ is by \eqref{eq-cetildea} the tensor product $\widehat{\otimes}_{j\not=k} \ce_j$.
\end{proof}
 We now prove the following result, which contains Theorem~\ref{thm-product}:

\begin{prop}Let $\Mm,\Omega$ be as in \eqref{eq-mproduct} and \eqref{eq-omegaproduct}. 
Then we have the following:
\begin{enumerate}
\item $\bc:\mathcal{A}^{-\infty}(\Omega)\to \mathcal{Y}^{0,1}_\Omega(\Mm)$ is an isomorphism of topological
vector spaces, and
\item $\widetilde{\mathcal{A}}^{-\infty}(\Omega)= \mathcal{A}^{-\infty}(\Omega)$
\end{enumerate}
\end{prop}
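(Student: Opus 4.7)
The plan is to prove both parts simultaneously by induction on $N$. The base case $N=1$ coincides with Theorem~\ref{thm-smoothcase}, since then $\widetilde{\mathcal{A}}^{-\infty}(\Omega)=\mathcal{A}^{-\infty}(D_1)$ and $\mathcal{Y}^{0,1}_\Omega(\Mm)=\mathcal{X}^{0,1}_{D_1}(\Mm_1)$. The crux of the induction is to show that the restricted map $\bc|_{\widetilde{\mathcal{A}}^{-\infty}(\Omega)}:\widetilde{\mathcal{A}}^{-\infty}(\Omega)\to\mathcal{Y}^{0,1}_\Omega(\Mm)$ is already surjective. Once this is done, the injectivity of $\bc$ on the larger space $\mathcal{A}^{-\infty}(\Omega)$ from Proposition~\ref{prop-bcinjective} forces $\widetilde{\mathcal{A}}^{-\infty}(\Omega)=\mathcal{A}^{-\infty}(\Omega)$, which is part~(2); the TVS isomorphism of part~(1) then follows from Proposition~\ref{res-dfsinverse}, using that $\mathcal{A}^{-\infty}(\Omega)$ is DFS by Proposition~\ref{prop-ainftydfs} and that $\mathcal{Y}^{0,1}_\Omega(\Mm)$ is DFS as a closed subspace of a direct sum of DFS spaces of face distributions supported in the compact sets $F_k$.

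For the inductive step, write $\Mm=\Mm_N\times\widehat{\Mm}_N$, $\Omega=D_N\times\widehat{D}_N$, and let $\gamma\in\mathcal{Y}^{0,1}_\Omega(\Mm)$ have standard decomposition $\gamma=\sum_k\gamma_k$ as in \eqref{eq-01dec2}. By Proposition~\ref{prop-alpha1alpha2} in the form \eqref{eq-gammakstruct}, together with the inductive identification $\mathcal{A}^{-\infty}(\widehat{D}_N)=\widetilde{\mathcal{A}}^{-\infty}(\widehat{D}_N)$, we obtain $\gamma_N|_{\Mm_N\times\widehat{D}_N}\in \mathcal{X}^{0,1}_{D_N}(\Mm_N)\csor\mathcal{A}^{-\infty}(\widehat{D}_N)$. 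Applying Theorem~\ref{thm-smoothcase} tensored with the identity to the first factor gives the candidate
\[
f:=(\bc_N^{-1}\csor\id)\bigl(\gamma_N|_{\Mm_N\times\widehat{D}_N}\bigr)\in \mathcal{A}^{-\infty}(D_N)\csor\mathcal{A}^{-\infty}(\widehat{D}_N)=\widetilde{\mathcal{A}}^{-\infty}(\Omega).
\]
Computing $\bc f$ with Lemma~\ref{lem-bcainfty}, the $N$-th summand in the standard decomposition of $\bc f$ is $(\bc_N\csor\ce_{\widehat{N}})(f)=(\id\csor\ce_{\widehat{N}})\bigl(\gamma_N|_{\Mm_N\times\widehat{D}_N}\bigr)$, which agrees with $\gamma_N$ by the canonicality condition \eqref{eq-canonicality2}.

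The main obstacle is to show that the remaining summands also match; I expect this to be the hardest step, since the construction only treats the $N$-th face directly. Set $\eta:=\bc f-\gamma$, so $\eta_N=0$ by the previous paragraph. Both $\bc f$ and $\gamma$ are $\dbar$-closed (the former automatically because $\bc=-\dbar\circ\ce$, the latter because the Weinstock condition applied to $\omega=\dbar\phi$ for $\phi\in\Dd^{n,n-2}(\Mm)$ yields $\dbar\gamma=0$), so $\dbar\eta=0$. Applying relation \eqref{eq-edge} of Proposition~\ref{prop-proddbarclosed} to $\eta$ with $j=N$ then forces $\dbar_{\Mm_N}\eta_k=0$ for every $k\neq N$. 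Both $\bc f$ and $\gamma$ have facewise structure (Proposition~\ref{prop-bcfacewise} and the hypothesis on $\gamma$), so $\eta_k$ is supported in $F_k=\partial D_k\times\overline{\widehat{D}_k}$; since $\overline{\widehat{D}_k}$ has $\overline{D_N}$ as one of its factors, the projection of $\supp(\eta_k)$ to $\Mm_N$ is contained in the compact set $\overline{D_N}$. Viewing $\eta_k$ as a distribution on $\Mm_N$ with values in a nuclear distribution space on the remaining factors, elliptic regularity applied to $\dbar_{\Mm_N}\eta_k=0$ realizes $\eta_k$ as a holomorphic vector-valued function on $\Mm_N$; this function vanishes on the nonempty open set $\Mm_N\setminus\overline{D_N}$, and analytic continuation on the connected component of $\Mm_N$ containing $D_N$ forces $\eta_k=0$. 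Therefore $\eta=0$, $\bc f=\gamma$, and the induction closes.
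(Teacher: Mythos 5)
Your proposal is correct in substance and follows the paper's overall skeleton (induction on $N$, the candidate $(\bc_N^{-1}\csor\id)\bigl(\gamma_N|_{\Mm_N\times\widehat{D}_N}\bigr)$, canonicality \eqref{eq-canonicality2} to match one component, and the DFS argument via Propositions~\ref{prop-ainftydfs} and~\ref{res-dfsinverse}), but it closes the key step --- matching the remaining components --- by a genuinely different route. The paper constructs a candidate $f_k$ for \emph{every} face, shows the $k$-th component of $\bc f_k$ is $\gamma_k$, and then proves $f_k=f_\ell$ for all pairs by introducing the injective operators $\mathsf{b}_{k,\ell}=\ce\csor\dots\csor\bc\csor\dots\csor\bc\csor\dots\csor\ce$ and applying the edge relation \eqref{eq-edge} to $\gamma$ itself. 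You instead work with the single candidate $f=f_N$, set $\eta=\bc f-\gamma$, apply \eqref{eq-edge} to $\eta$ (legitimate, since $\dbar\eta=0$) to get $\dbar_{\Mm_N}\eta_k=0$, and then kill $\eta_k$ by combining its support in $F_k$ (so its $\Mm_N$-projection lies in $\overline{D_N}$) with partial holomorphy along $\Mm_N$ and the identity theorem; this is the same style of slicing/vector-valued holomorphy argument the paper uses in Proposition~\ref{prop-alpha1alpha2}, and it is sound (the nonemptiness of the open set $C\setminus\overline{D_N}$ in the component $C$ containing $D_N$ uses that $D_N$ lies on one side of its smooth boundary). Your route buys two things: it avoids the injectivity of completed tensor products of injective maps, which the paper only asserts, and it visibly uses the canonicality hypothesis at a single face only (so by symmetry canonicality at any one face, together with Weinstock and facewise structure, already forces $\gamma=\bc f$); the paper's route is more symmetric and avoids the vector-valued identity-theorem bookkeeping. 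One citation should be repaired: \eqref{eq-gammakstruct} from Proposition~\ref{prop-alpha1alpha2} only gives $\gamma_N|_{\Mm_N\times\widehat{D}_N}\in\Dd'_{0,1}(\Mm_N)\csor\mathcal{A}^{-\infty}(\widehat{D}_N)$; the membership in $\mathcal{X}^{0,1}_{D_N}(\Mm_N)\csor\mathcal{A}^{-\infty}(\widehat{D}_N)$ that you need in order to apply $\bc_N^{-1}\csor\id$ is exactly \eqref{eq-gammakclaim}, the proposition of Section~\ref{sec-gammakstruct}, whose proof requires checking the slice-wise Weinstock and face conditions and does not follow from \eqref{eq-gammakstruct} alone; since that proposition precedes the statement you are proving, this is a mis-citation rather than a gap.
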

\begin{proof}We proceed by induction on $N$, the number of smooth factors of the product $\Mm$. For $N=1$, conclusion (1) is Theorem~\ref{thm-smoothcase} and conclusion (2) is obvious. Therefore we assume the result when 
$\Omega$ has $N-1$ smooth factors. Note that each $k=1,\dots, N$, we have
\[ \widetilde{\mathcal{A}}^{-\infty}(\widehat{D}_k)= \mathcal{A}^{-\infty}(\widehat{D}_k),\]
since the domain $\widehat{D}_k\subset \widehat{\Mm}_k$ is the product of $(N-1)$ smooth factors. 

We now show that $\bc:\mathcal{A}^{-\infty}(\Omega)\to \mathcal{Y}^{0,1}_\Omega(\Mm)$ is an isomorphism of 
topological vector spaces. The injectivity follows from Proposition~\ref{prop-bcinjective}, so we need only to show that 
$\bc$ is surjective. By Theorem~\ref{thm-smoothcase}, the map $\bc: \mathcal{A}^{-\infty}(D_k)\to \mathcal{X}^{0,1}_{D_k}(\Mm_k)$
 is an isomorphism of 
topological vector spaces. Denote by $\bc_k^{-1}$ its inverse, which is an isomorphism from $\mathcal{X}^{0,1}_{D_k}(\Mm_k)$ to $\mathcal{A}^{-\infty}(D_k)$. Tensoring with the identity map on $\widetilde{\mathcal{A}}^{-\infty}(\widehat{D}_k)$, we obtain an isomorphism 
\[ (\bc_k^{-1}\csor \id): \mathcal{X}^{0,1}_{D_k}(\Mm_k)\csor \mathcal{A}^{-\infty}(\widehat{D}_k)\to \widetilde{\mathcal{A}}^{-\infty}(\Omega),\]
where we use the fact that $\widetilde{\mathcal{A}}^{-\infty}(\Omega)={\mathcal{A}}^{-\infty}(D_k)\csor \widetilde{\mathcal{A}}^{-\infty}(\widehat{D}_k)$.

Now let $\gamma\in \mathcal{Y}^{0,1}_\Omega(\Mm)$, and let $\gamma_k\in \Dd'_{0,1}(\Mm_k)\csor \Dd'_{0,0}(\widehat{\Mm}_k)$, be as in the beginning of Section~\ref{sec-gammakstruct}.  Then by \eqref{eq-gammakclaim} and the induction hypothesis, we have that $\gamma_k|_{\Mm_k\times \widehat{D}_k} \in \mathcal{X}^{0,1}_{D_k}(\Mm_k)\csor \widetilde{\mathcal{A}}^{-\infty}(\widehat{D}_k).$ We define $f_k\in \widetilde{\mathcal{A}}^{-\infty}(\Omega)$  as 
\[ f_k = (\bc_k^{-1}\csor \id)\left( \gamma_k|_{\Mm_k\times \widehat{D}_k}\right),\]
which makes sense thanks to \eqref{eq-gammakclaim}. The proof of the surjectivity of $\bc$ will be completed by
showing that $\bc f_k= \gamma$.

Set $\lambda= \bc f_k$, and let $\lambda= \sum_{j=1}^N \lambda_j$ be the standard decomposition of $\lambda$ as 
in \eqref{eq-01dec2}. We claim that $\lambda_k=\gamma_k$. Indeed, in the representation \eqref{eq-bctildea}, the summands correspond to the terms of the standard decomposition \eqref{eq-01dec2}. Therefore, we have
\begin{align}\lambda_k &=(\bc_k\csor \ce_{\widehat{k}})f_k\nonumber\\
&=(\bc_k\csor \ce_{\widehat{k}}) (\bc_k^{-1}\csor \id)\left( \gamma_k|_{\Mm_k\times \widehat{D}_k}\right)\nonumber\\
&= (\id_k \csor \ce_{\widehat{k}})\left( \gamma_k|_{\Mm_k\times \widehat{D}_k}\right)\nonumber\\
&=\gamma_k,\label{eq-lambdakgammak}
\end{align}
thanks to the canonicality of the face distributions, as expressed in \eqref{eq-canonicality2}. To complete the proof
we will show that for each pair $k, \ell\in \{1,\dots, N\}$, we have $f_k=f_\ell$, which will ensure that $\bc f_k=\gamma$.

Consider the continuous linear map from $\widetilde{\mathcal{A}}^{-\infty}(\Omega)$ into $\Dd'_{0,2}(\Mm)$ given by
\[ \mathsf{b}_{k,\ell}= \ce \csor\dots \csor \bc \csor \dots \csor \bc \csor \dots \csor \ce,\]
where each tensor factor is $\ce$ except the $k$-th and $\ell$-th ones, which are $\bc$. Then $\mathsf{b}_{k,\ell}:\widetilde{\mathcal{A}}^{-\infty}(\Omega)\to\Dd'_{0,2}(\Mm)$ is injective, since each of the factor maps $\ce$ of $\bc$ of the tensor product 
defining $\mathsf{b}_{k,\ell}$ is injective. Using the fact that $\bc =-\dbar \circ \ce$, we obtain the representation
\begin{equation}\label{eq-bkl1}
\mathsf{b}_{k,\ell}= - \dbar_{\Mm_\ell}\circ (\bc_k \csor \ce_{\widehat{k}}),
\end{equation}
where $\dbar_{\Mm_\ell}$ is  the differential operator as in \eqref{eq-dbarmk}, the tensor product of the $\dbar$ operator on the factor $\Mm_\ell$ and the identity operator on $\widehat{\Mm}_\ell$, and $\bc_k \csor \ce_{\widehat{k}}$ is as in \eqref{eq-bctildea}. 
Of course, by symmetry we may also write
\begin{equation}\label{eq-bkl2}
  \mathsf{b}_{k,\ell}= - \dbar_{\Mm_k}\circ (\bc_\ell \csor \ce_{\widehat{\ell}}).
\end{equation}
Therefore, we have 
\begin{align*}  \mathsf{b}_{k,\ell} f_k & =  - \dbar_{\Mm_{\ell}}\circ (\bc_k \csor \ce_{\widehat{k}})f_k\\
&=  - \dbar_{\Mm_{\ell}}\gamma_k,
\end{align*}
and also 
\begin{align*}  \mathsf{b}_{k,\ell} f_\ell & =  - \dbar_{\Mm_{k}}\circ (\bc_\ell \csor \ce_{\widehat{\ell}})f_\ell\\
&=  - \dbar_{\Mm_{k}}\gamma_\ell.
\end{align*}
Using \eqref{eq-edge} of Proposition~\ref{prop-proddbarclosed} we see that $\dbar_{\Mm_{\ell}}\gamma_k=\dbar_{\Mm_{k}}\gamma_\ell$, therefore $\mathsf{b}_{k,\ell} f_k=\mathsf{b}_{k,\ell} f_\ell$, so that by injectivity of $\mathsf{b}_{k,\ell}$ it follows that $f_k$ and $f_\ell$ are the same function in $\widetilde{\mathcal{A}}^{-\infty}(\Omega)$.  This completes the proof of the surjectivity of 
$\bc:\mathcal{A}^{-\infty}(\Omega)\to \mathcal{Y}^{0,1}_\Omega(\Mm)$, and therefore it is a bijection, since 
we already know that $\bc$ is injective.

Note further that the inverse mapping to $\bc$ constructed during the above argument actually maps into $\widetilde{\mathcal{A}}^{-\infty}(\Omega)$. It follows that $\widetilde{\mathcal{A}}^{-\infty}(\Omega)={\mathcal{A}}^{-\infty}(\Omega)$.

Finally, both $\mathcal{A}^{-\infty}(\Omega)$ and $\mathcal{Y}^{0,1}_\Omega(\Mm)$ are DFS space, the latter being a closed subspace of $\Dd'_{0,1}(\Mm)$. It follows therefore from 
Proposition~\ref{res-dfsinverse} that $\bc$ is, in fact, 
an isomorphism of topological vector spaces. This completes the induction, and the proposition (and Theorem~\ref{thm-product})  is proved.
\end{proof}
\subsection{Boundary value on the distinguished boundary} For the product domain $\Omega\Subset\Mm$ of \eqref{eq-omegaproduct}, one can also consider boundary values on the {\em distinguished} or {\em \v{S}ilov} boundary
\[ \partial_\Sha \Omega= \partial D_1\times\dots\times \partial D_N,\]
which is a smooth submanifold of $\Mm$ of codimension $N$. In this section we give a brief account of such boundary values, omitting the routine proofs. We consider the operator $\bc_\Sha$ from $\mathcal{A}^{-\infty}(\Omega)=\widetilde{\mathcal{A}}^{-\infty}(\Omega)$ to $\Dd'_{0,N}(\Mm)$ (currents of 
bidegree $(0,N)$ on $\Mm$) given by
\[ \bc_\Sha = \widehat{\bigotimes}_{k=1}^N \bc_k=\bc_1\csor\dots\csor\bc_N,\]
where on the right hand side, $\bc_k:\mathcal{A}^{-\infty}(D_k)\to \mathcal{X}_{D_k}^{0,1}(\Mm)\subset\Dd'_{0,1}(\Mm)$ is the boundary current operator on the smooth domain $D_k$. Since for each $k$, the map $\bc_k:\mathcal{A}^{-\infty}(D_k)\to \mathcal{X}_{D_k}^{0,1}(\Mm)$ is an isomorphism of topological vector spaces, 
we conclude that we have an isomorphism
\[ \bc_\Sha : \mathcal{A}^{-\infty}(\Omega)\to \mathcal{X}^{0,N}_\Sha(\Mm),\]
where 
\[ \mathcal{X}^{0,N}_\Sha(\Mm) =\mathcal{X}^{0,1}_{D_1}(\Mm_1)\csor \dots \csor\mathcal{X}^{0,1}_{D_N}(\Mm_N)\subset \Dd'_{0,N}(\Mm).\]
Using the definition of the spaces $\mathcal{X}^{0,1}_{D_k}(\Mm_k)$ in terms of the Weinstock condition \eqref{eq-weinstock} and the existence of a face distribution \eqref{eq-facedistn}, we have the following easy consequence:
\begin{prop} Let $\Gamma\in \Dd'_{0,N}(\Mm)$. Then there is a  holomorphic $f\in \mathcal{A}^{-\infty}(\Omega)$ 
such that $\Gamma= \bc_\Sha f$ if and only if the following two conditions hold
\begin{enumerate}\item Let $\omega\in \Dd^{n,n-N}(\Mm)$ be such that for each $k=1,\dots, N$, we have $\dbar_{\Mm_k}\omega=0$ on $\overline{\Omega}$, where $\dbar_{\Mm_k}$ is as in \eqref{eq-dbarmk}.
Then we have $\pair{\Gamma}{\omega}=0$.
 \item  There is a distribution $A\in \Dd'_0(\Gamma)$ such that $\Gamma=I_*(A)^{(0,N)}$, where $I:\partial_\Sha \Omega\to \Mm$ is the inclusion map, and the superscript $(0,N)$ denotes taking the part of bidegree $(0,N)$ of 
 the $N$-current $I_*(A)$.
\end{enumerate}
\end{prop}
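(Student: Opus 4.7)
The plan is to handle the two directions separately: necessity by a direct tensor-product computation, and sufficiency by combining the stated isomorphism $\bc_\Sha\colon \mathcal{A}^{-\infty}(\Omega)\to \mathcal{X}^{0,N}_\Sha(\Mm)$ with an induction on~$N$ that reduces to Theorem~\ref{thm-smoothcase} factor by factor.

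\emph{Necessity.} Using the identification $\mathcal{A}^{-\infty}(\Omega)=\widetilde{\mathcal{A}}^{-\infty}(\Omega)$ from Theorem~\ref{thm-product} and the definition $\bc_\Sha=\bc_1\csor\cdots\csor\bc_N$, I would verify (1) and (2) on simple tensors $f=f_1\otimes\cdots\otimes f_N$ and extend by continuity and density. For~(2), Theorem~\ref{thm-smoothcase} gives $\bc_k f_k=\jmath^k_*(\alpha_k)^{(0,1)}$ with $\alpha_k\in\Dd'_0(\partial D_k)$, and since $I=\jmath^1\times\cdots\times\jmath^N$, the only $(0,N)$-contribution to the tensor product is
\[
\bc_\Sha f=\bigcsor_{k=1}^N\jmath^k_*(\alpha_k)^{(0,1)}=I_*(\alpha_1\otimes\cdots\otimes\alpha_N)^{(0,N)}.
\]
For~(1), Lemma~\ref{lem-bcainfty} together with $\bc_k=-\dbar\circ\ce_k$ yields $\bc_\Sha f=\pm\,\dbar_{\Mm_1}\cdots\dbar_{\Mm_N}(\ce f)$, so that by transposition
\[
\pair{\bc_\Sha f}{\omega}=\pm\pair{\ce f}{\dbar_{\Mm_N}\cdots\dbar_{\Mm_1}\omega}.
\]
Since the partial $\dbar_{\Mm_k}$-operators act on disjoint sets of variables, the right-hand form equals $\pm\,\dbar_{\Mm_N}\cdots\dbar_{\Mm_2}(\dbar_{\Mm_1}\omega)$, which vanishes in a neighborhood of $\overline\Omega$ under the hypothesis $\dbar_{\Mm_1}\omega=0$ there, while $\supp(\ce f)\subseteq\overline\Omega$; hence the pairing vanishes.

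\emph{Sufficiency.} Given $\Gamma$ satisfying (1) and~(2), the stated isomorphism reduces the problem to showing $\Gamma\in\mathcal{X}^{0,N}_\Sha(\Mm)=\bigcsor_k\mathcal{X}^{0,1}_{D_k}(\Mm_k)$. Condition~(2) and the Schwartz kernel theorem place $A\in\Dd'_0(\partial D_1)\csor\cdots\csor\Dd'_0(\partial D_N)$, so that $\Gamma\in\bigcsor_k\jmath^k_*(\Dd'_0(\partial D_k))^{(0,1)}$. I would then argue by induction on~$N$: the base $N=1$ is exactly Theorem~\ref{thm-smoothcase}, since (1) specializes to the Weinstock condition and (2) to the face-distribution condition. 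For $N\ge 2$, split $\Mm=\Mm_1\times\widehat\Mm_1$ and $\Omega=D_1\times\widehat D_1$, view $\Gamma\in\Dd'_{0,1}(\Mm_1)\csor\Dd'_{0,N-1}(\widehat\Mm_1)$, and apply Theorem~\ref{thm-smoothcase} to the $\Mm_1$-factor while invoking the inductive hypothesis on the $\widehat\Mm_1$-factor to recover the full tensor-product structure of $\mathcal{X}^{0,N}_\Sha(\Mm)$.

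\emph{Main obstacle.} The delicate step is extracting the factor-wise Weinstock conditions needed by the inductive argument from~(1), which only directly controls pairings against simple-tensor test forms $\omega_1\otimes\cdots\otimes\omega_N$ with every $\omega_k$ satisfying $\dbar\omega_k=0$ on $\overline{D_k}$ \emph{simultaneously}, whereas the induction requires the Weinstock condition for every slice $\Gamma(\,\cdot\,,\phi)\in\Dd'_{0,1}(\Mm_1)$ obtained by pairing against an \emph{arbitrary} test form $\phi$ on $\widehat\Mm_1$. I would bridge this by exploiting the support constraint $\supp\Gamma\subseteq\partial_\Sha\Omega$ from~(2): only the bidegree-$(n_k,n_k-1)$-on-each-factor component $\omega^{\natural}\in\bigcsor_k\Dd^{n_k,n_k-1}(\Mm_k)$ sees $\Gamma$, and nuclearity of these factor spaces allows one to approximate $\phi$ in the topological tensor product by linear combinations of simple tensors whose factors can independently be chosen $\dbar$-closed on the appropriate $\overline{D_j}$. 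Passing to the limit decouples the Weinstock conditions across factors, reducing the inductive step to Theorem~\ref{thm-smoothcase} applied on each $\Mm_k$. This is precisely where the product structure of $\Omega$ is essential.
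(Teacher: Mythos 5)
Your reduction of the problem, via the isomorphism $\bc_\Sha\colon\mathcal{A}^{-\infty}(\Omega)\to\mathcal{X}^{0,N}_\Sha(\Mm)$, to showing that conditions (1) and (2) characterize membership in $\mathcal{X}^{0,1}_{D_1}(\Mm_1)\csor\dots\csor\mathcal{X}^{0,1}_{D_N}(\Mm_N)$ is indeed the route the paper has in mind (the paper omits the proof as routine). The genuine gap is precisely at the step you flag as the main obstacle, and your proposed bridge does not close it. Approximating an arbitrary test form $\phi$ on $\widehat{\Mm}_1$ by combinations of simple tensors whose factors are $\dbar$-closed on the relevant $\overline{D_j}$ is impossible: such combinations restrict to the \v{S}ilov boundary as limits of products of CR data, and these do not exhaust arbitrary data (already on the circle, $e^{-i\theta}$ is not a limit of boundary values of functions holomorphic on $\D$). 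Worse, under the reading of (1) you adopt (all $\dbar_{\Mm_k}\omega$ vanish on $\overline{\Omega}$ \emph{simultaneously}), the sufficiency you are trying to prove is false, so no bridging argument can succeed. Take $\Mm=\cx\times\cx$, $\Omega=\D\times\D$, let $A=e^{-i\theta_1}$ on the torus $T^2=\partial\D\times\partial\D$ and $\Gamma=I_*(A)^{(0,2)}$. Condition (2) holds by construction; for any $\omega=g\,dz_1\wedge dz_2$ with $\dbar_{z_1}g=\dbar_{z_2}g=0$ on $\overline{\Omega}$, the restriction $g|_{T^2}$ has Fourier spectrum in $\{a\geq 0, b\geq 0\}$, and $\pair{\Gamma}{\omega}=-\int_{T^2}g\,e^{i\theta_2}\,d\theta_1\,d\theta_2=0$, so (1) in your reading holds as well; yet choosing $\omega$ with $g|_{T^2}=e^{-i\theta_2}$ gives $\pair{\Gamma}{\omega}=-(2\pi)^2\neq 0$, while $\pair{\bc_\Sha f}{\omega}=0$ for every $f\in\mathcal{A}^{-\infty}(\D\times\D)$ since $f$ has no negative Fourier modes in $\theta_1$. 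Hence condition (1) must be read and used factorwise: for each single $k$, $\pair{\Gamma}{\omega}=0$ for every $\omega$ with $\dbar_{\Mm_k}\omega=0$ on $\overline{\Omega}$ (that $k$ alone). With this reading your obstacle disappears: if $\dbar\omega_1=0$ on $\overline{D_1}$ and $\phi$ is an \emph{arbitrary} test form on $\widehat{\Mm}_1$, then $\omega_1\tensor\phi$ is already admissible, since $\dbar_{\Mm_1}(\omega_1\tensor\phi)$ vanishes on $\overline{D_1}\times\widehat{\Mm}_1\supset\overline{\Omega}$; so every slice of $\Gamma$ satisfies the Weinstock condition with respect to $D_1$, and combined with the face structure from (2) and the kernel theorem, Theorem~\ref{thm-smoothcase} applied factor by factor (plus the standard tensor-product bookkeeping for closed subspaces of nuclear spaces) yields sufficiency. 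Note also that (1) as stated applies to all $\omega$, not only simple tensors.

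Two smaller repairs in your necessity argument. From $\dbar_{\Mm_1}\omega=0$ on $\overline{\Omega}$ you may conclude that $\dbar_{\Mm_N}\cdots\dbar_{\Mm_1}\omega$ vanishes on $\overline{\Omega}$ (it vanishes on the open set $\Omega$, hence on the closure by continuity), but not on a neighborhood of $\overline{\Omega}$, so $\supp(\ce f)\subseteq\overline{\Omega}$ alone is insufficient; what saves the computation is the stronger property used in the proof of Proposition~\ref{prop-weinstock}, namely that by \eqref{eq-ce} the distribution $\ce f$ annihilates every test form vanishing on $\overline{\Omega}$. And to pass from simple tensors to general $f$ in the necessity of (2), you should invoke the analogue of Lemma~\ref{lem-facestruct} for the codimension-$N$ submanifold $\partial_\Sha\Omega$, which shows that the set of currents of the form $I_*(A)^{(0,N)}$ is closed in $\Dd'_{0,N}(\Mm)$, so that the property survives the limit.
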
 

Further, from \eqref{eq-bc}, we can obtain the following local representation of the boundary value on the \v{S}ilov boundary. 
\begin{prop}Let $p\in \partial_\Sha\Omega$, let $U$ be a coordinate chart of $\Mm$ around $p$, $U=U_1\times \dots\times U_N$, where $U_k\subset \Mm_k$, and let $v$ be a vector such that its projection on each $\Mm_k$ is transverse to $\partial D_k$ inside $U_k$. Then, for each $\psi\in \Dd^{n, n-N}(U)$, we have
\[ \pair{\bc_\Sha f}{\psi} = \lim_{\epsilon\downarrow 0} \int_{\partial_\Sha\Omega} f_\epsilon \psi,\]
where $f_\epsilon(z)= f(z-\epsilon v)$.
\end{prop}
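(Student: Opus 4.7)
The plan is to reduce the identity to one-variable computations in each factor, exploiting the tensor-product factorization of both $\bc_\Sha$ and the canonical extension $\ce$, where we use that $\mathcal{A}^{-\infty}(\Omega)=\widetilde{\mathcal{A}}^{-\infty}(\Omega)$ by Theorem~\ref{thm-product}. First I would reduce by linearity and continuity to product test forms $\psi=\psi_1\tensor\cdots\tensor\psi_N$ with $\psi_k\in \Dd^{n_k,n_k-1}(U_k)$. To justify this, decompose a general $\psi\in \Dd^{n,n-N}(U)$ via the Schwartz kernel theorem into its multi-bidegree components in $\Dd^{p_1,q_1}(U_1)\csor\cdots\csor\Dd^{p_N,q_N}(U_N)$. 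A degree count shows that both sides of the claimed identity vanish on all components except those with $(p_k,q_k)=(n_k,n_k-1)$ for every $k$: the left-hand side because $\bc_\Sha f$ has multi-bidegree $(0,1)$ on each factor, the right-hand side because the pullback of a $(p_k,q_k)$-form on $\Mm_k$ to $\partial D_k$ vanishes unless $p_k+q_k=2n_k-1$, which combined with $\sum p_k=n$ and $p_k\leq n_k$ forces $p_k=n_k$ and hence $q_k=n_k-1$. Density of the algebraic tensor product then lets us restrict to pure tensors $\psi$ of the required type.

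For such a $\psi$ and any $f\in\mathcal{A}^{-\infty}(\Omega)$ I would next establish the intermediate identity
\begin{equation}\label{eq-proofinter}
\pair{\bc_\Sha f}{\psi_1\tensor\cdots\tensor\psi_N}=\pair{\ce f}{\dbar\psi_1\tensor\cdots\tensor\dbar\psi_N}.
\end{equation}
For a pure tensor $f=f_1\tensor\cdots\tensor f_N$ this reduces factorwise: by definition $\bc_\Sha=\bc_1\csor\cdots\csor\bc_N$, Lemma~\ref{lem-bcainfty} (combined with Theorem~\ref{thm-product}) gives $\ce=\ce_1\csor\cdots\csor\ce_N$ on $\mathcal{A}^{-\infty}(\Omega)$, and the one-variable relation $\pair{\bc_k f_k}{\psi_k}=\pair{-\dbar\ce_k f_k}{\psi_k}=\pair{\ce_k f_k}{\dbar\psi_k}$ is immediate from $\bc_k=-\dbar\circ\ce_k$ and the transpose identity $\pair{\dbar T}{\omega}=-\pair{T}{\dbar\omega}$ for a $0$-current $T$. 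Continuity of $\bc_\Sha$ and $\ce$ out of $\mathcal{A}^{-\infty}(\Omega)$ together with density of the algebraic tensor product in $\widetilde{\mathcal{A}}^{-\infty}(\Omega)$ extends \eqref{eq-proofinter} to all $f$.

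The defining formula \eqref{eq-ce} for $\ce f$ is applicable here because the transversality of $v$ along each $\partial D_k$ in $U_k$ places us in the setting of Theorem~\ref{thm-ce}, and it turns the right-hand side of \eqref{eq-proofinter} into
\[\lim_{\epsilon\downarrow 0}\int_{\Omega}f_\epsilon\cdot\bigl(\dbar\psi_1\tensor\cdots\tensor\dbar\psi_N\bigr).\]
For each small $\epsilon>0$, $f_\epsilon$ is holomorphic and smooth up to the boundary on $U\cap\overline{\Omega}$, so $\dbar_{\Mm_k}f_\epsilon=0$ and hence $f_\epsilon\,\dbar\psi_k=\dbar_{\Mm_k}(f_\epsilon\psi_k)$ in each variable. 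Applying Fubini together with an iterated Stokes' theorem on each $D_k$ — noting that $\partial(f_\epsilon\psi_k)=0$ by bidegree, so $d(f_\epsilon\psi_k)=\dbar(f_\epsilon\psi_k)$ — converts the volume integral into $\int_{\partial_\Sha\Omega}f_\epsilon\,\psi$, and passing to the limit yields the claim.

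The main obstacle I anticipate is primarily bookkeeping: making the multi-bidegree decomposition clean enough that only the single relevant component of $\psi$ contributes to both sides, and invoking the tensor-product factorizations of $\bc_\Sha$ and $\ce$ carefully enough for the extension from pure tensors of $f$ to arbitrary $f\in\mathcal{A}^{-\infty}(\Omega)$ to go through by density and continuity. Once this bookkeeping is in place, the analytic content — the iterated Stokes computation and the passage to the limit — is immediate.
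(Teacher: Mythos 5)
Your overall route --- factor $\bc_\Sha=\bc_1\csor\cdots\csor\bc_N$ and $\ce=\ce_1\csor\cdots\csor\ce_N$ via Lemma~\ref{lem-bcainfty}, use $\bc_k=-\dbar\circ\ce_k$, then the defining formula \eqref{eq-ce} and an iterated Stokes argument --- is exactly the ``routine'' argument the paper has in mind (the paper omits the proof), and your multi-bidegree bookkeeping and the Stokes computation are sound. But there is one genuine logical gap: the opening reduction, where you pass from pure tensors $\psi_1\tensor\cdots\tensor\psi_N$ back to a general $\psi$ ``by density.'' What must be proved for a general $\psi$ includes the \emph{existence} of the limit $\lim_{\epsilon\downarrow 0}\int_{\partial_\Sha\Omega}f_\epsilon\psi$, and knowing the limit on a dense subspace of test forms does not give it on all of $\Dd^{n,n-N}(U)$ unless the family $T_\epsilon:\psi\mapsto\int_{\partial_\Sha\Omega}f_\epsilon\psi$ is equicontinuous. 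Naively $\norm{f_\epsilon}_{L^\infty(\partial_\Sha\Omega\cap U)}$ blows up like $\epsilon^{-k}$, so equicontinuity is not free; it is essentially the content of the Barrett integration-by-parts mechanism behind Theorem~\ref{thm-ce}, which your reduction bypasses. (Discarding the multi-bidegree components other than $((n_k,n_k-1))_k$ is fine, since for those components each $T_\epsilon$ vanishes identically; the problem is only the passage from finite sums of pure tensors to their closure.)

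The gap closes with the ingredients you already use, just in the opposite order. First prove the current identity $\pair{\bc_\Sha f}{\psi}=\pm\pair{\ce f}{\dbar_{\Mm_1}\cdots\dbar_{\Mm_N}\psi}$ for \emph{all} $\psi\in\Dd^{n,n-N}(U)$: this is your intermediate identity with $\dbar\psi_1\tensor\cdots\tensor\dbar\psi_N$ replaced by the partial operators of \eqref{eq-dbarmk}, and here density (in $f$, using $\widetilde{\mathcal{A}}^{-\infty}(\Omega)$, and in $\psi$) is legitimate because both sides are fixed currents evaluated on $\psi$, hence continuous in $\psi$. Then, for the given arbitrary $\psi$, apply \eqref{eq-ce} to the $(n,n)$-form $\dbar_{\Mm_1}\cdots\dbar_{\Mm_N}\psi$ (note that for $f_\epsilon$ and \eqref{eq-ce} to make sense one needs $v$ to point outward from $\Omega$ along every face meeting $U$, not merely to be transverse; this is implicit in the paper's statement as well), and run your iterated Stokes computation at each fixed $\epsilon>0$ in reverse to convert $\int_\Omega f_\epsilon\,\dbar_{\Mm_1}\cdots\dbar_{\Mm_N}\psi$ into $\pm\int_{\partial_\Sha\Omega}f_\epsilon\psi$. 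This yields existence of the limit and the asserted identity for every $\psi$ simultaneously. Apart from this reordering (and the sign bookkeeping in the tensor pairings, which the paper does not track either), your argument is correct.
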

\bibliographystyle{alpha}
\bibliography{hartogs}
\end{document}